%% LyX 2.2.4 created this file.  For more info, see http://www.lyx.org/.
%% Do not edit unless you really know what you are doing.
\documentclass[12pt,a4paper,english]{amsart}
\usepackage{mathptmx}
\usepackage[latin9]{inputenc}
\usepackage{color}
\usepackage{babel}
\usepackage{verbatim}
\usepackage{mathrsfs}
\usepackage{mathtools}
\usepackage{amstext}
\usepackage{amsthm}
\usepackage{amssymb}
\usepackage[unicode=true,pdfusetitle,
 bookmarks=true,bookmarksnumbered=false,bookmarksopen=false,
 breaklinks=false,pdfborder={0 0 1},backref=false,colorlinks=true]
 {hyperref}
\hypersetup{
 pdfborderstyle=,citecolor=blue}

\makeatletter

%%%%%%%%%%%%%%%%%%%%%%%%%%%%%% LyX specific LaTeX commands.
\pdfpageheight\paperheight
\pdfpagewidth\paperwidth

%%%%%%%%%%%%%%%%%%%%%%%%%%%%%% Textclass specific LaTeX commands.
\numberwithin{equation}{section}
\numberwithin{figure}{section}
  \theoremstyle{plain}
  \newtheorem*{thm*}{\protect\theoremname}
  \theoremstyle{plain}
  \newtheorem*{cor*}{\protect\corollaryname}
  \theoremstyle{remark}
  \newtheorem*{rem*}{\protect\remarkname}
\theoremstyle{plain}
\newtheorem{thm}{\protect\theoremname}[section]
  \theoremstyle{plain}
  \newtheorem{lem}[thm]{\protect\lemmaname}
  \theoremstyle{plain}
  \newtheorem{prop}[thm]{\protect\propositionname}
  \theoremstyle{definition}
  \newtheorem{defn}[thm]{\protect\definitionname}
  \theoremstyle{remark}
  \newtheorem{rem}[thm]{\protect\remarkname}
  \theoremstyle{plain}
  \newtheorem{cor}[thm]{\protect\corollaryname}
  \theoremstyle{definition}
  \newtheorem*{example*}{\protect\examplename}
  \theoremstyle{definition}
  \newtheorem{example}[thm]{\protect\examplename}
  \theoremstyle{remark}
  \newtheorem*{claim*}{\protect\claimname}

%%%%%%%%%%%%%%%%%%%%%%%%%%%%%% User specified LaTeX commands.
% `tikz-cd` is necessary to draw commutative diagrams.
\RequirePackage{tikz-cd}
% `amssymb` is necessary for `\lrcorner` and `\ulcorner`.
\RequirePackage{amssymb}
% `calc` is necessary to draw curved arrows.
\usetikzlibrary{calc}
% `pathmorphing` is necessary to draw squiggly arrows.
\usetikzlibrary{decorations.pathmorphing}

\usepackage{babel}

\hypersetup{citecolor=blue}
\mathtoolsset{showonlyrefs}

%------- DEFINITIONS ------------

%\newcommand{\stft}{short-time Fourier transform}

\newcommand{\fif}{if and only if}

\newcommand{\bdl}{band-limited}

\newcommand{\beqa}{\begin{eqnarray*}}
\newcommand{\eeqa}{\end{eqnarray*}}

\DeclareMathOperator*{\supp}{supp}

\newcommand{\field}[1]{\mathbb{#1}}
\newcommand{\bR}{\field{R}}        %  real numbers
\newcommand{\bN}{\field{N}}        %  natural numbers 
        %  whole numbers 
\newcommand{\bC}{\field{C}}        %  complex numbers
        %  rational numbers
        %  rational numbers
        %  
        %  

  % new list environment 

 \def\cF{\mathcal{F}}              % Calligraphic Letters

 \def\cH{\mathcal{H}}

 \def\cC{\mathcal{C}}

 \def\cO{\mathcal{O}}

\def\rd{\bR^d}

\def\rdd{{\bR^{2d}}}

\def\lrd{L^2(\rd)}

\def\<{\left<}
\def\>{\right>}

\def\mv1{M_v^1}

   % Q.E.D. box 
% \newcommand{\abs}[1]{\lvert#1\rvert}
% \newcommand{\norm}[1]{\lVert#1\rVert}

% \newcommand{\au}{\pause}
% \newcommand{\spit}{\vspace{2 mm}}
%------ END ------

%%% Local Variables: 
%%% mode: latex
%%% TeX-master: t
%%% End: 

\newcommand{\rkhs}{reproducing kernel Hilbert space}

\newcommand {\norm}[2][2]{\left\Vert #2\right\Vert _{#1}}%

%\makeatother

\providecommand{\corollaryname}{Corollary}
\providecommand{\definitionname}{Definition}

\providecommand{\lemmaname}{Lemma}
\providecommand{\propositionname}{Proposition}
\providecommand{\remarkname}{Remark}
\providecommand{\theoremname}{Theorem}
% Added by lyx2lyx
\setlength{\headheight}{8pt}
\setlength{\textheight}{210mm}  % DIN A4: 297mm=11.65in, 247mm= 9.65in

\setlength{\oddsidemargin}{0pt}
\setlength{\evensidemargin}{0pt}
\setlength{\textwidth}{148 mm} 
% Added by lyx2lyx
% Added by lyx2lyx
% Added by lyx2lyx
% Added by lyx2lyx
% Added by lyx2lyx
% Added by lyx2lyx
% Added by lyx2lyx
% Added by lyx2lyx
%\usepackage{environ}
%\NewEnviron{killcontents}{}
%\let\proof\killcontents
%\let\endproof\endkillcontents
% Added by lyx2lyx
% Added by lyx2lyx
% Added by lyx2lyx
% Added by lyx2lyx
% Added by lyx2lyx
% Added by lyx2lyx
% Added by lyx2lyx

\renewcommand{\textemdash}{---}
% Added by lyx2lyx
\usepackage{microtype}

\makeatother

  \providecommand{\claimname}{Claim}
  \providecommand{\corollaryname}{Corollary}
  \providecommand{\definitionname}{Definition}
  \providecommand{\examplename}{Example}
  \providecommand{\lemmaname}{Lemma}
  \providecommand{\propositionname}{Proposition}
  \providecommand{\remarkname}{Remark}
  \providecommand{\theoremname}{Theorem}
\providecommand{\theoremname}{Theorem}

\begin{document}
% \title{Necessary Density Conditions for Sampling and Interpolation in
%   Generalized Paley-Wiener spaces on $\mathbb{R}^{d}$} 

\title[Critical Densities in Spectral Subspaces]{Necessary Density Conditions for Sampling and Interpolation in Spectral
Subspaces of Elliptic Differential Operators}

\author{Karlheinz Gr\"ochenig and Andreas Klotz}

\date{\today}
\begin{abstract}
We prove necessary density conditions for sampling in spectral subspaces of a second order uniformly elliptic differential operator on $\rd$ with slowly oscillating symbol. For constant coefficient operators, these are precisely Landaus necessary density conditions for band-limited functions, but for more general elliptic differential operators it has been unknown whether such a critical density even exists. Our results prove the existence of a suitable critical sampling density and compute it in terms of the geometry defined by the elliptic operator. In dimension $d=1$, functions in a spectral subspace can be interpreted as functions with variable bandwidth, and we obtain a new critical density for variable bandwidth. The methods are a combination of the spectral theory and the regularity theory of elliptic partial differential operators, some elements of limit operators, certain compactifications of $\rd$, and the theory of reproducing kernel Hilbert spaces. 
\end{abstract}

\subjclass[2000]{46E22,47B32,35J99,42C40,94A20,54D35.}

\keywords{Spectral subspace, Paley-Wiener space, bandwidth, Beurling density,
sampling, interpolation, elliptic operator, regularity theory, slow
oscillation, Higson compactification}

\thanks{This work was supported by the project P31887-N32 of the Austrian
Science Fund (FWF)}

\address{Faculty of Mathematics\\
 University of Vienna \\
Oskar Morgenstern-Platz 1 \\
A-1090 Vienna, Austria}

\email{karlheinz.groechenig@univie.ac.at}

\email{andreas.klotz@univie.ac.at}

\maketitle
%\maketitle
%\date{\today}

%\author{Andreas Klotz}

\begin{comment}

\address{Faculty of Mathematics \\
 University of Vienna \\
 Oskar-Morgenstern-Platz 1 \\
 A-1090 Vienna, Austria}

\email{andreas.klotz@univie.ac.at}

\email{karlheinz.groechenig@univie.ac.at}\maketitle
\end{comment}

%\maketitle

\begin{comment}
We prove necessary density conditions for sampling in spectral subspaces
of a uniformly second order elliptic differential operator on $\mathbb{R^{\mathrm{d}}}$
%$,$whose
with slowly oscillating symbol. The main results can be seen as a
version of Landau's necessary density conditions for \bdl\
functions. %  While sampling theorems for sufficiently dense sampling
% sets can be deduced from smoothness estimates and a Bernstein
% inequality,
In this context it has been unknown whether a critical density even
exists. Our results prove the existence of a suitable critical sampling
density and compute it in terms of the geometry defined by the elliptic
operator. In dimension $d=1$, functions in a spectral subspace can
be interpreted as functions with variable bandwidth, and we obtain
a new critical density for variable bandwidth. The methods are a combination
of the spectral theory and the regularity theory of elliptic partial
differential operators, some elements of limit operators, certain
compactifications of $\rd$, and the theory of reproducing kernel
Hilbert spaces. %   We use an approach based on limit operators.
% We demonstrate that the necessary density conditions for sampling
% and interpolation are compactness conditions for the centered reproducing
% kernels.
\end{comment}

%\begin{document}
\global\long\def\PW{PW_{\Omega}}
\global\long\def\cC{\mathcal{C}}
\global\long\def\rd{\mathbb{R}^{d}}
\global\long\def\r{\mathbb{\mathbb{R}}}
\global\long\def\n{\mathbb{\mathbb{N}}}
\global\long\def\z{\mathbb{Z}}
\global\long\def\Lii{L^{2}(\rd)}
\global\long\def\wam{L^{2}\left(L^{\infty}(Q)\right)}
\global\long\def\blii{\mathcal{B}(\Lii)}
\global\long\def\auto#1#2{\tau_{#1}(#2)}
\global\long\def\trans#1#2{T_{-#1}#2T_{#1}}
%\newcommandx\norm[2][usedefault, addprefix=\global, 1=2]{\left\Vert #2\right\Vert _{#1}}%

\global\long\def\inprod#1{\langle#1\rangle}
\global\long\def\abs#1{\left|#1\right|}
\global\long\def\specproj#1{\chi_{[0,\Omega]}(#1)}
\global\long\def\sobolev#1{W_{2}^{#1}}
\global\long\def\smoothind{\tilde{\chi}}
\global\long\def\ind{\chi}
\global\long\def\dom{\mathcal{D}}
\global\long\def\cinf{C_{c}^{\infty}\left(\rd\right)}
\global\long\def\cbinf{C_{b}^{\infty}\left(\rd\right)}
\global\long\def\cbinfd{C_{b}^{\infty}\left(\rd,\mathbb{C}^{d\times d}\right)}
\global\long\def\cbinfx#1{C_{b}^{\infty}\left(\rd,#1\right)}
\global\long\def\cub{C_{b}^{u}\left(\rd\right)}
\global\long\def\ll{L^{2}}
\global\long\def\so{C_{h}\left(\rd\right)}
\global\long\def\soi{C_{h}^{\infty}\left(\rd\right)}
\global\long\def\soid{C_{h}^{\infty}\left(\rd,\mathbb{C}^{d\times d}\right)}
\global\long\def\sosd{C_{\sigma}^{\infty}\left(\rd,\mathbb{C}^{d\times d}\right)}
\global\long\def\sod{C_{h}\left(\rd,\mathbb{C}^{d\times d}\right)}
\global\long\def\sox#1{C_{h}\left(\rd,#1\right)}
\global\long\def\soix#1{C_{h}^{\infty}\left(\rd,#1\right)}
\global\long\def\higson{h\rd}
\global\long\def\supp{\mathrm{supp\,}}
\global\long\def\c{\mathbb{C}}
\global\long\def\kl{k_{x}^{\Lambda}}
\global\long\def\Re{\mathfrak{Re}}
 \global\long\def\sos{C_{\sigma}\left(\rd\right)}
\global\long\def\tr{\mathrm{tr}}

\section{Introduction}

The classical Paley-Wiener\ space is the subspace $PW_{\Omega}=\{f\in L^{2}(\r):\supp\hat{f}\subseteq[-\Omega,\Omega]\}$
of $L^{2}(\r)$. Using Fourier inversion, one sees that the point
evaluation $f\mapsto f(x)$ is bounded on $PW_{\Omega}$. The fundamental
questions about $PW_{\Omega}$ are originally motivated by problems
in signal processing and information theory: when is $f\in PW_{\Omega}$
completely and stably determined by its samples $\{f(s):s\in S\}$
on a set $S\subseteq\r$? On which sets $S\subseteq\r$ can every
sequence $(a_{s})_{s\in S}\in\ell^{2}(S)$ be interpolated by a function
$f$ in $PW_{\Omega}$, so that $f(s)=a_{s}$ for all $s\in S$? These
questions were answered by Beurling~\cite{beur89} and Landau~\cite{landau67}. 
\begin{thm*}[A]
\label{tm0} (i) Assume that $S$ is uniformly separated and that
\begin{equation}
A\|f\|_{2}^{2}\leq\sum_{s\in S}|f(s)|^{2}\leq B\|f\|_{2}^{2}\qquad\text{ for all }f\in PW_{\Omega}\,,\label{eq:c1}
\end{equation}
then 
\begin{equation}
D^{-}(S)=\liminf_{r\to\infty}\inf_{x\in\r}\frac{\#(S\cap[x-r,x+r])}{2r}\geq\frac{\Omega}{\pi}\,.\label{eq:c2}
\end{equation}
(ii) If for all $a\in\ell^{2}(S)$ there exists $f\in PW_{\Omega}$,
such that $f(s)=a_{s},s\in S$, then 
\begin{equation}
D^{+}(S)=\limsup_{r\to\infty}\sup_{x\in\r}\frac{\#(S\cap[x-r,x+r])}{2r}\leq\frac{\Omega}{\pi}\,.\label{eq:c3}
\end{equation}
\end{thm*}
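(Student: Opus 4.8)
The natural route is Landau's method via reproducing kernels and concentration operators, organized as a comparison of ``dimensions'' in the style of Ramanathan--Steger. The common setup is this: $PW_\Omega$ is a reproducing kernel Hilbert space with kernel $k_x(y)=\sin\Omega(x-y)/(\pi(x-y))$, so $f(x)=\langle f,k_x\rangle$ and $\|k_x\|_2^2=k_x(x)=\Omega/\pi$ for every $x$. Write $P_\Omega$ for the orthogonal projection of $L^2(\mathbb R)$ onto $PW_\Omega$ and, for a bounded interval $I$, let $C_I=P_\Omega\chi_I P_\Omega$ be the time--band-limiting (concentration) operator: it is self-adjoint, positive, trace class, has spectrum in $[0,1]$, satisfies $\operatorname{tr}C_I=\int_I k_x(x)\,dx=|I|\,\Omega/\pi$, and commutes with translations, so its spectrum is unchanged if $I$ is translated. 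The one classical input I will quote is Landau's eigenvalue estimate: for each fixed $\delta\in(0,1)$ the spectral projection $\Pi_I^\delta$ of $C_I$ onto eigenvalues $\ge 1-\delta$ satisfies $\operatorname{rank}\Pi_I^\delta=|I|\,\Omega/\pi-o(|I|)$ as $|I|\to\infty$, uniformly in the center of $I$ (the ``plunge region'' is $O(\log|I|)$). The two elementary facts about $PW_\Omega$ that I will lean on are the kernel bound $|k_x(y)|\le\frac1{\pi|x-y|}$ and the (local) Plancherel--P\'olya/Bessel inequality: for uniformly separated $S$ and $g\in PW_\Omega$ one has $\sum_{s\in S}|g(s)|^2\le B\|g\|_2^2$, and in fact $\sum_{s\in S,\ \operatorname{dist}(s,I)\ge\rho}|g(s)|^2\le C\|\chi_{I^c}g\|_2^2$ once $\rho$ exceeds a constant depending only on $\Omega$, with $C$ depending only on $\Omega$ and the separation of $S$.

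For part (i): the sampling hypothesis says $\{k_s:s\in S\}$ is a frame for $PW_\Omega$ with bounds $A,B$. Fix $\delta$ small, let $I=[x-r,x+r]$ with $r$ large, and let $I'=[x-r-\rho,x+r+\rho]$ with $\rho$ a suitable constant. Any unit vector $f$ in the range of $\Pi_I^\delta$ obeys $\|\chi_{I^c}f\|_2^2\le\delta$, so by the local Plancherel--P\'olya inequality $\sum_{s\in S\setminus I'}|f(s)|^2\le C\delta$, and therefore
\[
\sum_{s\in S\cap I'}|f(s)|^2=\sum_{s\in S}|f(s)|^2-\sum_{s\in S\setminus I'}|f(s)|^2\ \ge\ A-C\delta\ >\ 0
\]
as soon as $\delta<A/C$. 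Thus the finite-rank map $f\mapsto(f(s))_{s\in S\cap I'}$ is injective (indeed bounded below) on the range of $\Pi_I^\delta$, which forces $\#(S\cap I')\ge\operatorname{rank}\Pi_I^\delta=|I|\,\Omega/\pi-o(|I|)$. Dividing by $|I'|=2(r+\rho)$, taking the infimum over $x$, and letting $r\to\infty$ (with $\rho$ fixed, so $|I|/|I'|\to1$) gives $D^-(S)\ge\Omega/\pi$.

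For part (ii): first, an interpolating set must be uniformly separated, since two close points $s,s'$ make $k_s,k_{s'}$ almost parallel and forbid interpolating opposite data with bounded norm; hence $\{k_s:s\in S\}$ is Bessel, and since $f\mapsto(f(s))_s$ is onto $\ell^2(S)$ the adjoint (synthesis) map is bounded below, so $\{k_s:s\in S\}$ is a Riesz sequence whose truncated Gram matrices satisfy $M\succeq a\,\mathrm{Id}$. For $I=[x-r,x+r]$ put $W=\operatorname{span}\{k_s:s\in S\cap I\}$; then $\operatorname{rank}P_W=\#(S\cap I)$. Since $\mathrm{Id}_{PW_\Omega}-C_{I'}=P_\Omega\chi_{(I')^c}P_\Omega=C_{(I')^c}$, expanding $P_W$ through the Gram matrix of $\{k_s:s\in S\cap I\}$ gives
\[
\#(S\cap I)-\operatorname{tr}(P_WC_{I'})=\operatorname{tr}(P_WC_{(I')^c})=\operatorname{tr}(M^{-1}N)\ \le\ \tfrac1a\operatorname{tr}N,
\]
where $N=\big(\langle\chi_{(I')^c}k_{s'},k_s\rangle\big)_{s,s'\in S\cap I}\succeq0$ and $N_{ss}=\int_{(I')^c}|k_s|^2\le\frac{2}{\pi^2\rho}$ by the kernel bound (every point of $(I')^c$ is at distance $\ge\rho$ from $s\in I$). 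Hence $\#(S\cap I)-\operatorname{tr}(P_WC_{I'})\le\frac{2}{a\pi^2\rho}\#(S\cap I)$, and as $\operatorname{tr}(P_WC_{I'})\le\operatorname{tr}C_{I'}=|I'|\,\Omega/\pi$ we get $\#(S\cap I)\le|I'|(\Omega/\pi)(1-\tfrac{2}{a\pi^2\rho})^{-1}$ for $\rho$ large. Dividing by $|I|=2r$, taking the supremum over $x$, letting $r\to\infty$ and then $\rho\to\infty$ yields $D^+(S)\le\Omega/\pi$.

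The main obstacle, in both parts, is the control of the ``tails'': showing that everything outside the slightly enlarged interval $I'$ contributes a relative error that is genuinely small (not just $o(|I|)$), and doing so \emph{uniformly in the center $x$}. The sinc kernel decays only like $|x-y|^{-1}$, so the naive pointwise or Hilbert--Schmidt bounds on $\sum_{s\notin I'}$ diverge; the resolution is that one must feed these tails back through the Hilbert-space structure of $PW_\Omega$ --- the local Plancherel--P\'olya inequality in (i), and in (ii) the positivity $N\succeq0$ of the truncated Gram matrix together with $M^{-1}\preceq a^{-1}\mathrm{Id}$ --- which uses bandlimitedness to beat the slow decay. Everything else is bookkeeping with traces and with Landau's eigenvalue asymptotics for $C_I$; the latter I would take as known, though it can be extracted from $\operatorname{tr}(C_I-C_I^2)=o(|I|)$, which bounds the number of eigenvalues in the plunge region.
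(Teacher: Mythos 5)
Your proposal is correct, but it is worth saying up front that the paper does not actually prove Theorem~A: it is quoted from Beurling and Landau as classical background, and the paper's own work goes into the variable-coefficient generalizations (Theorems~B and~C), which are obtained by verifying the hypotheses (WL) and (HAP) of an abstract density theorem for reproducing kernel Hilbert spaces and then computing the averaged trace of the kernel. What you have written is essentially Landau's original argument in its modern Ramanathan--Steger form, and it is instructive to see how it lines up with the paper's abstraction: your part (i) compares $\#(S\cap I')$ with the rank of the near-$1$ spectral subspace of the concentration operator $C_I=P_\Omega\chi_I P_\Omega$, and the tail control you need there is exactly the (HAP) property; your part (ii) is a pure trace comparison $\#(S\cap I)(1-o_\rho(1))\leq\mathrm{tr}\,C_{I'}=|I'|\Omega/\pi$, and the identification of the critical density with $\mathrm{tr}\,C_I/|I|=k(x,x)$ is precisely the ``averaged trace'' that reappears in Section~6 of the paper. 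Your route buys explicitness (the sinc kernel, the exact trace, and the eigenvalue asymptotics via $\mathrm{tr}(C_I-C_I^2)=O(\log|I|)$, which you correctly note suffices to deduce $\mathrm{rank}\,\Pi_I^\delta=|I|\Omega/\pi-o(|I|)$); the paper's route buys applicability to operators $H_a$ where no explicit kernel is available.

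One technical caveat. The ``local Plancherel--P\'olya inequality'' as you state it, $\sum_{s\in S,\ \mathrm{dist}(s,I)\geq\rho}|g(s)|^2\leq C\|\chi_{I^c}g\|_2^2$, is stronger than what the standard argument delivers: writing $g(s)=\langle g,\kappa_s\rangle$ for a rapidly decaying majorizing kernel $\kappa$ (with $\hat\kappa\equiv(2\pi)^{-1/2}$ on $[-\Omega,\Omega]$) and splitting the integral over $I$ and $I^c$ yields
\[
\sum_{s\in S,\ \mathrm{dist}(s,I)\geq\rho}|g(s)|^2\ \leq\ C\|\chi_{I^c}g\|_2^2+\varepsilon(\rho)\|g\|_2^2,\qquad\varepsilon(\rho)\to0,
\]
because the contribution of $\chi_I g$ tested against the far tails of $\kappa_s$ can only be controlled by the global norm. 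This weaker form is all your argument uses (it gives $\sum_{s\in S\cap I'}|f(s)|^2\geq A-C\delta-\varepsilon(\rho)>0$ after choosing $\delta$ small and $\rho$ large), so the proof stands; but you should either prove the inequality in this weaker form or replace the sinc kernel by a rapidly decaying one, which is exactly the device the paper uses in its proof of (HAP) via the Sobolev kernel $T_x\kappa$. The remaining standard inputs you quote --- separation of interpolating sets via the open mapping theorem and Bernstein, the Riesz-sequence bound $M\succeq a\,\mathrm{Id}$, and the positivity of the truncated Gram matrix $N$ --- are all correctly deployed.
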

In the established terminology, a set that satisfies a sampling inequality
of the form~\eqref{eq:c1} is called a sampling set for the underlying
space $PW_{\Omega}$, or a set of stable sampling. A set on which
arbitrary $\ell^{2}$-data can be interpolated is called a set of
interpolation. The expressions $D^{-}(S)$ and $D^{+}(S)$ are called
the lower and the upper Beurling density.

The number $\Omega/\pi$ in \eqref{eq:c2} and \eqref{eq:c3} is an
important invariant of the space $PW_{\Omega}$ and has an interpretation
in information theory. Since, roughly speaking, the densities $D^{\pm}(S)$
measure the average number of samples in $S$ per unit length, the
\emph{necessary density conditions} of Theorem~\nameref{tm0} say
that at least $\Omega/\pi$ samples per unit length are required to
recover a function in $PW_{\Omega}$ from $f|_{S}$, whereas at most
$\Omega/\pi$ values per unit length are permitted to solve the interpolation
problem in $PW_{\Omega}$. Thus the density $\Omega/\pi$ represents
a critical value below which (stable) sampling is impossible, and
above which interpolation is impossible. Indeed, these questions about
sampling and interpolation were at the origin of Shannon's information
theory~\cite{Shannon48}, and the uniform sampling theorem with $S=\alpha\z$
is still considered the basis of analog-digital conversion in modern
signal processing. The ratio $D^{\pm}(S)/\Omega$ is a measure for
the redundancy, thus for the performance quality, of the sampling
set $S$. The theory of Beurling, Kahane, and Landau provides a rigorous
mathematical formulation for the existence of a critical density for
arbitrary sets $S$ (in place of $\alpha\z$). Although we will not
touch this question here, we mention that conditions of Theorem~\nameref{tm0}
yield almost a characterization of sets of sampling and of interpolation:
\emph{In dimension $d=1$, if $S$ is uniformly separated and $D^{-}(S)>1$,
then $S$ is a sampling set for $PW_{\Omega}$, and if $D^{+}(S)<1$,
then $S$ is a set of interpolation for $PW_{\Omega}$.} See~\cite{Kah62,beur89,seip04}
for an exposition of the sampling theory in the classical Paley-Wiener
space.

The connection with partial differential operators comes from the
observation that $PW_{\Omega}$ is a spectral subspace of the differential
operator $H=-\frac{d^{2}}{dx^{2}}$. This differential operator is
diagonalized by the Fourier transform, $\mathcal{F}(-\frac{d^{2}f}{dx^{2}})(\xi)=\xi^{2}\hat{f}(\xi)$,
so that the spectral projection on the interval $[0,\Omega]$ is given
by $\chi_{[0,\Omega]}(H)f=\mathcal{F}^{-1}(\chi_{[0,\Omega]}(\xi^{2})\hat{f})$.
This implies that 
\begin{equation}
\PW=\chi_{[0,\Omega^{2}]}(H)L^{2}(\r)\,.\label{eq:pw}
\end{equation}
This observation is the starting point for many generalizations of
Paley-Wiener spaces and sampling theorems. % Pesenson~\cite{} has developed an extended,
% qualitative theory of sampling in spectral subspaces of unbounded,
% positive, self-adjoint operators, Coulhon, Kerkyacharian, and Petrushev~\cite{}
% have {*}{*}{*}{*} not needed here

In this work we study the question of necessary density conditions
for sampling and interpolation in the spectral subspaces of a self-adjoint
uniformly elliptic differential operator 
\begin{equation}
H_{a}=-\sum_{j,k=1}^{d}\partial_{j}a_{jk}(x)\partial_{k}\,\label{eq:c4}
\end{equation}
acting on $\Lii$ with a smooth positive definite (matrix) symbol
$a=(a_{jk}(x))_{j,k=1,\dots,d}$. The Paley-Wiener space associated
to $H_{a}$ is the spectral subspace 
\[
PW_{\Omega}\left(H_{a}\right)=\chi_{[0,\Omega]}(H_{a})\lrd\,,
\]
where, as usual, $\chi_{[0,\Omega]}(H_{a})$ is the orthogonal projection
corresponding to the spectrum $[0,\Omega]$.

If the symbol $a(x)=a$ is constant, then $H_{a}$ is similar to the
Laplace operator, and the corresponding spectral subspace can be described
with Fourier techniques. For this case necessary density conditions
for sampling and interpolation are already contained in Landau's results~\cite{landau67}.
Optimal sufficient conditions for sampling in $\rd$ in terms of a
covering density were obtained by Beurling~\cite{beurling66}. However,
if $H_{a}$ is a uniformly elliptic differential operator with variable
coefficients, then the standard techniques break down, and it was
an open question (i) whether a critical density exists for sampling
and interpolation in the spectral subspaces of $H_{a}$, and (ii)
how to compute this critical density.

We will answer this question for a class uniformly elliptic operators.
We say $a\in\cbinfd$ is slowly oscillating if $\lim_{|x|\to\infty}|\partial_{k}a(x)|=0$
for $k=1,\dots,d$.
\begin{thm*}[B]
\label{tm1} If $a$ is slowly oscillating, then there exists a critical
density for sampling and interpolation for $\PW\left(H_{a}\right)$. 
\end{thm*}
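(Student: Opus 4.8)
The plan is to localize the problem at infinity: to exploit the slow oscillation of $a$ so that $H_a$ becomes comparable at infinity to constant-coefficient limit operators, to solve the density question for those by the classical density theorem of Landau~\cite{landau67}, and then to reassemble the resulting local bounds into a single critical density measured in the geometry attached to $a$. The first ingredient is that $\PW(H_a)$ is a reproducing kernel Hilbert space with uniform control on its kernel. By elliptic regularity, every $f$ in the range of $\chi_{[0,\Omega]}(H_a)$ lies in $\bigcap_k\dom(H_a^k)\subseteq\bigcap_k\sobolev{2k}(\rd)\hookrightarrow\cbinf$, so point evaluations are bounded and there is a reproducing kernel $\kl$; moreover uniform ellipticity together with the boundedness of $a$ and of all its derivatives yields bounds on $\kl$ and its derivatives that are uniform in $x$. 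This uniformity is what will let limits be taken as $x\to\infty$.

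Next I would pass to limit operators. For a net $x_n\to\omega$ in the Higson corona $\higson\setminus\rd$ one computes
\[
\trans{x_n}{H_a}f=-\sum_{j,k}\bigl[(\partial_j a_{jk})(\cdot+x_n)\,\partial_k f+a_{jk}(\cdot+x_n)\,\partial_j\partial_k f\bigr],
\]
and slow oscillation forces $(\partial_j a_{jk})(\cdot+x_n)\to0$ while $a_{jk}(\cdot+x_n)\to a_{jk}(\omega)$ locally uniformly, where $\bar a\colon\higson\to\{\text{positive definite matrices}\}$ is the continuous extension of $a$. Hence $\trans{x_n}{H_a}$ converges in the strong resolvent sense to the \emph{constant-coefficient} operator $H_{a(\omega)}=-\sum_{j,k}a_{jk}(\omega)\partial_j\partial_k$; since $H_{a(\omega)}$ has purely absolutely continuous spectrum, $\chi_{[0,\Omega]}(\trans{x_n}{H_a})\to\chi_{[0,\Omega]}(H_{a(\omega)})$ strongly, and the translated reproducing kernels converge to the kernel of $\PW(H_{a(\omega)})$. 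A linear change of variables identifies $\PW(H_{a(\omega)})$ with the Euclidean Paley--Wiener space over the ellipsoid $E_\omega=\{\xi:\inprod{a(\omega)\xi,\xi}\le\Omega\}$, whose Landau density is $\nu(\omega)=(2\pi)^{-d}|E_\omega|=c_d\,\Omega^{d/2}(\det a(\omega))^{-1/2}$.

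The decisive step is to transfer the sampling and interpolation inequalities to the limit. Let $S$ satisfy \eqref{eq:c1} for $\PW(H_a)$ (after passing to a uniformly separated subset, which is routine). Fixing $\omega$ and a net $x_n\to\omega$, weak-$*$ compactness of uniformly separated sets yields a subnet along which $S-x_n$ converges to a uniformly separated set $S_\omega$; using the uniform kernel estimates and the strong convergence above, one shows that $S_\omega$ is a sampling set for $\PW(H_{a(\omega)})$ with the same constants, and dually that an interpolating $S$ gives an interpolating $S_\omega$. Landau's theorem applied to $\PW(H_{a(\omega)})$ then gives $D^-(S_\omega)\ge\nu(\omega)$ for sampling and $D^+(S_\omega)\le\nu(\omega)$ for interpolation. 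Since the density of $S_\omega$ is a localized density of $S$ along the rays approaching $\omega$, and since re-expressing densities with the volume $(\det a)^{-1/2}\,dx$ and the $a$-ellipsoidal balls turns $\nu(\omega)$ into the \emph{position-independent} constant $c_d\,\Omega^{d/2}$, one obtains a single number that bounds the $a$-density of every sampling set from below and the $a$-density of every set of interpolation from above. That common value is the asserted critical density.

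The main obstacle is precisely this transference, since $\PW(H_a)$ and $\PW(H_{a(\omega)})$ are genuinely distinct Hilbert spaces and functions cannot simply be restricted between them. The mechanism I would use is a two-way comparison: given $g\in\PW(H_{a(\omega)})$, the cut-off projection $\chi_{[0,\Omega]}(H_a)(\varphi_R\cdot T_{x_n}g)$ is an approximate element of $\PW(H_a)$ whose samples on $S$ are controlled by the samples of $g$ on $S-x_n$, with errors tending to $0$ as $R,n\to\infty$ by the uniform kernel decay and the local convergence of the operators, and symmetrically in the reverse direction. Making these error estimates \emph{uniform in $x_n$} --- which is exactly where uniform ellipticity, the uniform boundedness of all derivatives of $a$, and the Higson-compactification encoding of slow oscillation are indispensable --- is the technical crux on which the whole argument rests.
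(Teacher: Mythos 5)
Your strategy is genuinely different from the paper's. The paper never attempts to transfer sampling or interpolation inequalities from $\PW(H_a)$ to the limit spaces $\PW(H_{\bar a(\omega)})$; instead it invokes the abstract density theorem for reproducing kernel Hilbert spaces (Theorem~\ref{abstrdens}, quoted from~\cite{densrkhs16}) and reduces the whole matter to three kernel properties \emph{of $\PW(H_a)$ alone}: the two-sided diagonal bound~\eqref{eq:kernelbound}, weak localization (WL), and the homogeneous approximation property (HAP). Limit operators and the Higson compactification do appear, but only to prove that the family of centered kernels $\{T_{-x}k_x : x\in\rd\}$ is relatively compact in every Sobolev space (Theorem~\ref{thm: L2relcpt}); (WL) then follows via Riesz--Kolmogorov and (HAP) via a Bessel-sequence argument, and the \emph{value} of the critical density comes from a trace computation on the Higson corona. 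Neither weak limits of translated sampling sets nor Landau's theorem applied to the limit operators are ever needed. Your route --- pass to $S_\omega = \lim (S - x_n)$, show $S_\omega$ samples/interpolates $\PW(H_{\bar a(\omega)})$, apply Landau there, reassemble --- is in the older Beurling-style weak-limit tradition, which the abstract RKHS framework is specifically designed to bypass.

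There is, however, a genuine gap, and you identify it yourself: the transference of the sampling and interpolation inequalities between $\PW(H_a)$ and $\PW(H_{\bar a(\omega)})$, which are distinct subspaces of $L^2(\rd)$ with no natural map between them. The two-way cut-off argument you sketch --- $\chi_{[0,\Omega]}(H_a)(\varphi_R\cdot T_{x_n}g)$ as an approximate element of $\PW(H_a)$ with errors vanishing as $R,n\to\infty$ --- requires quantitative, \emph{$x_n$-uniform} control on how the spectral projection interacts with multiplication by $\varphi_R$, on the off-diagonal decay of $\chi_{[0,\Omega]}(H_a)$, and on the rate of convergence $\chi_{[0,\Omega]}(\trans{x_n}{H_a})\to\chi_{[0,\Omega]}(H_{\bar a(\omega)})$; none of this is supplied, and it is exactly the analytic content that the paper's (WL)/(HAP) verification packages. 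Two further problems: first, the lower bound $k(x,x)\ge c>0$ in~\eqref{eq:kernelbound} --- indispensable for the critical density to be a positive number --- does \emph{not} follow from elliptic regularity and Sobolev embedding, contrary to what your opening paragraph suggests; the paper must bring in Gaussian heat-kernel estimates to prove it (Proposition~\ref{prop:kernbound} and Appendix~B). Second, even granting the transference at every corona point $\omega$, the step from the local densities $D^{\pm}(S_\omega)$ back to the \emph{global} Beurling density of $S$ needs a careful diagonal argument (over $r\to\infty$ and $x$ simultaneously), which is asserted but not carried out.
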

Adapting the measure to the geometry associated to the differential
operator $H_{a}$,  the  critical density can be determined
explicitly. This is our main result. 
\begin{thm*}[C]
\label{tm3} Assume $H_{a}=-\sum_{j,k=1}^{d}\partial_{j}a_{jk}\partial_{k}$
is a self-adjoint uniformly elliptic operator with slowly oscillating
symbol $a\in\cbinfd$. Let $d\nu(x)=(\det a(x))^{-1/2}dx$ be the
associated measure.

(i) If $S\subseteq\rd$ is a set of stable sampling for $\PW\left(H_{a}\right)$
then 
\begin{equation}
D_{\nu}^{-}\left(S\right)=\liminf_{r\to\infty}\inf_{x\in\rd}\frac{\#(S\cap B_{r}(x))}{\nu(B_{r}(x))}\geq\frac{\left|B_{1}\right|}{\left(2\pi\right)^{d}}\Omega^{d/2}.\label{eq:5a-1}
\end{equation}
(ii) If $S\subseteq\rd$ is a set of interpolation for $\PW\left(H_{a}\right)$,
then 
\begin{equation}
D_{\nu}^{+}\left(S\right)=\limsup_{r\to\infty}\sup_{x\in\rd}\frac{\#(S\cap B_{r}(x))}{\nu(B_{r}(x))}\leq\frac{\left|B_{1}\right|}{\left(2\pi\right)^{d}}\Omega^{d/2}\,.\label{eq:5b-1}
\end{equation}
\end{thm*}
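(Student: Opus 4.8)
The plan is to realize $\PW(H_a)$ as a reproducing kernel Hilbert space whose point-evaluation kernels behave, at large scales and uniformly in position, like those of the constant-coefficient model operators arising as limit operators of $H_a$, and then to run Landau's comparison argument between the sampling (resp.\ interpolation) operator and the concentration operator $\chi_{[0,\Omega]}(H_a)\,\chi_{B_r(x_0)}\,\chi_{[0,\Omega]}(H_a)$. The whole matter rests on three facts about $H_a$ alone. \emph{RKHS structure:} by interior elliptic regularity and Sobolev embedding every $f\in\PW(H_a)$ is smooth with $\sup_{x}|f(x)|\le C\|f\|_2$, since $H_a^m f\in\Lii$ with $\|H_a^m f\|_2\le\Omega^m\|f\|_2$ and the regularity constants are uniform in $x$ (the coefficients $a\in\cbinfd$ and all their derivatives are bounded and $H_a$ is uniformly elliptic); hence there is a reproducing kernel $x\mapsto k_x$ with $f(x)=\langle f,k_x\rangle$, $0\le k_x(x)=\|k_x\|_2^2\le C$, and $\int_{B_r(x_0)}k_x(x)\,dx=\tr\bigl(\chi_{B_r(x_0)}\chi_{[0,\Omega]}(H_a)\,\chi_{B_r(x_0)}\bigr)$. \emph{Uniform homogeneous approximation property:} I would show $\lim_{\rho\to\infty}\sup_{x}\|k_x\,\chi_{\rd\setminus B_\rho(x)}\|_2=0$ and that the plunge region of the concentration operator is of lower order, $\bigl\|[\chi_{B_r(x_0)},\chi_{[0,\Omega]}(H_a)]\bigr\|_{\mathrm{HS}}^2=o(\nu(B_r(x_0)))$ uniformly in $x_0$. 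Off-diagonal decay of the kernel of $\varphi(H_a)$ for Schwartz $\varphi$ follows from finite propagation speed for $\cos(t\sqrt{H_a})$ (valid with uniform constants for a uniformly elliptic operator of bounded geometry), and approximating $\chi_{[0,\Omega]}$ by such $\varphi$ transfers the decay to $k_x$; the uniformity in $x$ at infinity is where slow oscillation enters, via the extension of $a$ to the Higson compactification $h\rd$ and the fact that every limit operator of $H_a$ at the Higson corona is a constant-coefficient elliptic operator, so that a compactness argument over the corona upgrades the local estimates to uniform ones.

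\emph{Trace (Weyl) density.} This is the crux:
\begin{equation}
\lim_{r\to\infty}\ \sup_{x_0\in\rd}\ \Bigl|\ \frac{1}{\nu(B_r(x_0))}\int_{B_r(x_0)}k_x(x)\,dx\ -\ \frac{|B_1|}{(2\pi)^d}\,\Omega^{d/2}\ \Bigr|=0 . \label{eq:tracedensity}
\end{equation}
Writing $k_x(x)=e(x,x,\Omega)$ for the on-diagonal spectral function of $H_a$, I would decompose $e(x,x,\Omega)=\frac{|B_1|}{(2\pi)^d}\Omega^{d/2}(\det a(x))^{-1/2}+E(x)$; the first term integrates over $B_r(x_0)$ to exactly $\frac{|B_1|}{(2\pi)^d}\Omega^{d/2}\nu(B_r(x_0))$, by the definition of $\nu$. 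For the remainder, the limit-operator principle gives $E(x)\to0$ as $|x|\to\infty$: along any $x_n\to\infty$ one has, after passing to a subsequence, $a(\cdot+x_n)\to a_\infty$ locally uniformly with $a_\infty$ a constant positive-definite matrix; the translated operator converges to $H_{a_\infty}$ in strong resolvent sense (its spectrum is purely absolutely continuous, so $\Omega$ causes no trouble), elliptic regularity upgrades this to local uniform convergence of spectral kernels, and the elementary Fourier computation gives $e^{H_{a_\infty}}(0,0,\Omega)=\frac{|B_1|}{(2\pi)^d}\Omega^{d/2}(\det a_\infty)^{-1/2}$; together with $a(x_n)\to a_\infty$ this forces $E(x_n)\to0$. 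Since $E$ is bounded, $\nu$ is comparable to Lebesgue measure by uniform ellipticity, and the relative $\nu$-measure in $B_r(x_0)$ of any fixed compact set tends to $0$ uniformly in $x_0$ as $r\to\infty$, the averaged remainder in \eqref{eq:tracedensity} vanishes uniformly.

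Granting the three facts, the density bounds follow from the standard eigenvalue-comparison scheme. For (i): the upper sampling inequality and the bound on $k_x$ force $S$ to be relatively separated, the frame operator satisfies $\sum_{s\in S}\langle\cdot,k_s\rangle k_s\ge A\,\mathrm{Id}$ on $\PW(H_a)$, and comparing it with the concentration operator on a ball via the homogeneous approximation property shows that $\#(S\cap B_r(x_0))$ is at least the number of eigenvalues of $\chi_{[0,\Omega]}(H_a)\,\chi_{B_{r-\rho(r)}(x_0)}\,\chi_{[0,\Omega]}(H_a)$ close to $1$, which by \eqref{eq:tracedensity} equals $\bigl(\tfrac{|B_1|}{(2\pi)^d}\Omega^{d/2}-o(1)\bigr)\nu(B_r(x_0))$; dividing and letting $r\to\infty$ yields \eqref{eq:5a-1}. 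For (ii): a set of interpolation is uniformly separated (from the RKHS bounds and the bounded right inverse furnished by the open mapping theorem), so $\{k_s:s\in S\}$ is a Riesz sequence and $\#(S\cap B_r(x_0))=\dim\operatorname{span}\{k_s:s\in S\cap B_r(x_0)\}$; by the homogeneous approximation property this span sits almost isometrically inside the subspace of $\PW(H_a)$ of functions essentially supported in $B_{r+\rho(r)}(x_0)$, whose dimension is at most $\bigl(\tfrac{|B_1|}{(2\pi)^d}\Omega^{d/2}+o(1)\bigr)\nu(B_r(x_0))$ by \eqref{eq:tracedensity}, which gives \eqref{eq:5b-1}.

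The main obstacle is \eqref{eq:tracedensity}, and within it the \emph{uniformity} over all centers $x_0\in\rd$ simultaneously: the on-diagonal spectral function $e(x,x,\Omega)$ carries geometry-dependent corrections to the leading Weyl term that do not become small for a fixed energy $\Omega$, and it is precisely the slow-oscillation hypothesis---channeled through limit operators, the Higson compactification, and the interior regularity theory of $H_a$---that makes these corrections decay at infinity and therefore average away uniformly. Everything else, including the passage from this pointwise picture to Landau's eigenvalue bookkeeping, is comparatively routine once the uniform control of $H_a$ at infinity has been set up.
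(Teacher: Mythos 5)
Your architecture matches the paper's in its broad strokes: realize $\PW(H_a)$ as an RKHS via elliptic regularity and a Bernstein inequality, compute the averaged diagonal of the kernel using limit operators (your $E(x)\to 0$ is precisely the paper's statement that $k(x,x)-\tfrac{|B_1|\Omega^{d/2}}{(2\pi)^d}\det a(x)^{-1/2}\in C_0(\rd)$, Propositions~\ref{prop: higs-cont} and \ref{prop:C0kernel}), and then run a Landau-type eigenvalue comparison. The trace computation and the conversion from the dimension-free density $D_0^\pm$ to $D_\nu^\pm$ (your $\nu$-normalization step, matching Lemma~\ref{lem:changemeas}) are essentially the same. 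The two genuine departures are (a) you redo Landau's concentration-operator bookkeeping by hand, whereas the paper black-boxes it into the abstract density theorem of \cite{densrkhs16} after verifying the hypotheses (WL) and (HAP); and (b) your mechanism for the crucial off-diagonal control is different, and this is where the gap lies.

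The gap is in your proposed route to weak localization. You want off-diagonal decay of $\chi_{[0,\Omega]}(H_a)$ by first getting decay for $\varphi(H_a)$ with Schwartz $\varphi$ via finite propagation speed for $\cos(t\sqrt{H_a})$, and then ``approximating $\chi_{[0,\Omega]}$ by such $\varphi$'' to transfer the decay to $k_x$. The first half is fine, but the transfer is not: $\|(\chi_{[0,\Omega]}-\varphi)(H_a)\|_{\mathrm{op}}$ is controlled by $\|\chi_{[0,\Omega]}-\varphi\|_\infty$ which stays bounded away from zero for any continuous $\varphi$, and the quantity that actually matters, $\sup_x\|(\chi_{[0,\Omega]}-\varphi)(H_a)k_x\|_2^2 = \sup_x\bigl(\chi_{[0,\Omega]}-\varphi\bigr)^2(H_a)(x,x)$, amounts to uniform smallness of the diagonal spectral density of $H_a$ in a thin window near $\Omega$. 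Nothing in the hypotheses guarantees this directly, and indeed the paper never needs it: the hypothesis in Theorem~\ref{mainabstr} is that $\Omega$ is not an eigenvalue of any \emph{limit} operator $H_b$, not of $H_a$ itself. The paper therefore avoids any sharp-cutoff spectral multiplier estimate and instead proves (Theorem~\ref{rkconv-1}) that along any unbounded net with $T_{-x_\lambda}a\to b$ the centered kernels $T_{-x_\lambda}k_{x_\lambda}$ converge in $W_2^s$ to the reproducing kernel of $\PW(H_b)$ (using spectral calculus convergence under strong resolvent convergence and the fact $\Omega\notin\sigma_p(H_b)$), concludes that $\{T_{-x}k_x : x\in\rd\}$ is relatively compact in $W_2^s$ (Theorem~\ref{thm: L2relcpt}), and then extracts (WL) from the Riesz--Kolmogorov criterion and (HAP) from the dual point-evaluation $f(x)=\langle f,k_x\rangle_{L^2}=\langle f,T_x\kappa\rangle_{W_2^s}$ together with the Feichtinger version of Kolmogorov--Riesz for translation-invariant spaces. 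If you want to keep finite propagation speed, you would still need the compactness-via-limit-operators step to dispose of the near-$\Omega$ spectral window; at that point the heat/wave-equation machinery is redundant and the paper's route is both shorter and strictly weaker in its hypotheses.

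One smaller point: for the trace computation you work with sequences $x_n\to\infty$, which is fine for proving $E\in C_0(\rd)$ since $\rd$ is metrizable; the paper's use of nets and the Higson corona is needed to package the compactness of the kernel orbit in a topologically clean way, but is not strictly required for the averaged trace.
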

\begin{comment}
Thus the measure $\nu$ determines the geometry associated to the
differential operator $H_{a}$.
\end{comment}
{} Except for the modified definition of the density, the formulation  of the
theorem is identical to Landau's theorem \cite{landau67}. By contrast, 
the method of proof is  vastly different as Fourier methods are not
available for the proof of Theorem \nameref{tm3}.  In addition
we draw the new insight that the appropriate notion of density must
be linked %the Beurling density associated
to this geometry.

For the special case of a symbol that is asymptotically constant at
infinity we can use the standard Beurling densities and obtain the
following consequence. 
\begin{cor*}[D]
\label{tm2} Assume that $a\in\cbinfd$ is asymptotically constant,
i.e., $\lim_{x\to\infty}a(x)=b$. Let $\Sigma_{\Omega}^{b}=\{\xi\in\rd:b\xi\cdot\xi\leq\Omega\}$.

(i) If $S\subseteq\rd$ is a set of sampling for the Paley-Wiener
space $\PW(H_{a})$, then 
\begin{equation}
D^{-}(S)\geq\frac{|\Sigma_{\Omega}^{b}|}{(2\pi)^{d}}\,.\label{eq:c5}
\end{equation}

(ii) If $S\subseteq\rd$ is a set of interpolation for the Paley-Wiener
space $\PW(H_{a})$, then 
\begin{equation}
D^{+}(S)\leq\frac{|\Sigma_{\Omega}^{b}|}{(2\pi)^{d}}\,.\label{eq:c5a}
\end{equation}
\end{cor*}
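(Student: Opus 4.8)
The plan is to obtain Corollary~\nameref{tm2} directly from Theorem~\nameref{tm3}. The point is that for an asymptotically constant symbol the adapted measure $\nu$ coincides, up to a fixed scalar factor, with Lebesgue measure near infinity, so the adapted densities $D^{\pm}_{\nu}$ and the ordinary Beurling densities $D^{\pm}$ differ only by that factor, and the factor is precisely the one converting $|B_1|\,\Omega^{d/2}$ into $|\Sigma^{b}_{\Omega}|$.

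First I would check that Theorem~\nameref{tm3} applies, i.e.\ that an asymptotically constant $a\in\cbinfd$ is automatically slowly oscillating. Since $a\in\cbinfd$ all partial derivatives of $a$ are bounded, and the Taylor expansion $a(x+he_k)=a(x)+h\,\partial_k a(x)+\tfrac{h^2}{2}\partial_k^2 a(\xi)$ yields
\[
|\partial_k a(x)|\le\frac{2}{h}\sup_{|y-x|\le h}|a(y)-b|+\frac{h}{2}\,\|\partial_k^2 a\|_{\infty};
\]
choosing $h$ small and then $|x|$ large shows $|\partial_k a(x)|\to 0$. Moreover $a$ is continuous, everywhere positive definite, and $a(x)\to b$ with $b$ positive definite (which is forced by uniform ellipticity of $H_a$), so $\inf_x\lambda_{\min}(a(x))>0$ and the hypotheses of Theorem~\nameref{tm3} are met. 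Set $g(x):=(\det a(x))^{-1/2}$ and $c:=(\det b)^{-1/2}$; then $0<c_1\le g\le c_2<\infty$ on $\rd$ and $g(x)\to c$ as $|x|\to\infty$.

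Next I would compare the densities. For all $x\in\rd$ and $r>0$,
\[
\Bigl|\frac{\nu(B_r(x))}{|B_r(x)|}-c\Bigr|\le\frac{1}{|B_r(x)|}\int_{B_r(x)}|g(y)-c|\,dy,
\]
and, given $\varepsilon>0$, choosing $R$ so that $|g(y)-c|<\varepsilon$ for $|y|>R$ and splitting $B_r(x)$ into $B_r(x)\cap B_R(0)$ (where $|g-c|\le c_2+c$) and its complement, the right-hand side is at most $(c_2+c)(R/r)^d+\varepsilon$, hence $<2\varepsilon$ once $r$ is large, uniformly in $x$. Thus $\nu(B_r(x))/|B_r(x)|\to c$ uniformly in $x$ as $r\to\infty$. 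Inserting this into the definitions, and using that a factor converging uniformly to the positive constant $c^{-1}$ commutes with $\inf_x$ and $\liminf_r$ (resp.\ $\sup_x$ and $\limsup_r$), I would conclude
\[
D^{-}_{\nu}(S)=(\det b)^{1/2}\,D^{-}(S),\qquad D^{+}_{\nu}(S)=(\det b)^{1/2}\,D^{+}(S).
\]

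Finally, the linear substitution $\xi=b^{-1/2}u$ carries $\Sigma^{b}_{\Omega}=\{\xi:b\xi\cdot\xi\le\Omega\}$ onto $\{|u|\le\Omega^{1/2}\}$ with constant Jacobian $(\det b)^{-1/2}$, so $|\Sigma^{b}_{\Omega}|=(\det b)^{-1/2}\Omega^{d/2}|B_1|$ and $|\Sigma^{b}_{\Omega}|/(2\pi)^d=(\det b)^{-1/2}|B_1|\,\Omega^{d/2}/(2\pi)^d$. Combining this with the last display and Theorem~\nameref{tm3}: if $S$ is a set of sampling then $(\det b)^{1/2}D^{-}(S)=D^{-}_{\nu}(S)\ge|B_1|\,\Omega^{d/2}/(2\pi)^d$, which after division by $(\det b)^{1/2}$ is \eqref{eq:c5}; if $S$ is a set of interpolation then $(\det b)^{1/2}D^{+}(S)=D^{+}_{\nu}(S)\le|B_1|\,\Omega^{d/2}/(2\pi)^d$, which is \eqref{eq:c5a}. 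The only step requiring care is the uniform-in-$x$ convergence of the density ratio, but this is exactly the elementary estimate above: the region where $\det a$ has not yet settled near $\det b$ lies in a fixed compact set whose relative $\nu$-measure inside a ball of radius $r$ is $O(r^{-d})$.
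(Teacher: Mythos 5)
Your proof is correct and follows the same route the paper takes: Example~(i) after Theorem~\ref{mgen} asserts (without detail) that asymptotic constancy forces $a\in\soid$ and that $D_{\nu}^{\pm}(S)=(\det b)^{1/2}D^{\pm}(S)$, and then applies Theorem~\ref{mgen} together with~\eqref{eq:volepps}. You have simply filled in the verifications the paper calls ``straightforward''---the Taylor estimate showing $\partial_k a\to 0$ (which gives $a\in\soid$ via Lemma~\ref{lem:charsoi}), the uniform-in-$x$ convergence $\nu(B_r(x))/|B_r(x)|\to(\det b)^{-1/2}$, and the change-of-variable computation of $|\Sigma_{\Omega}^{b}|$---and these details are all sound.
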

We note that the same critical density holds for the Paley-Wiener
space of the constant coefficient differential operator $H_{b}$.
Since $H_{a}$ may be considered a perturbation of $H_{b}$ and since
the Beurling density $D^{\pm}(S)$ is an asymptotic quantity, it is
to be expected that the necessary density for $\PW(H_{a})$ coincides
with the necessary density for $\PW(H_{b})$.

Let us put these statements into context.

\medskip{}

\textbf{Sampling in spectral subspaces.} Several researchers have
created an extensive qualitative theory of sampling in spectral subspaces
of a general unbounded, positive, self-adjoint operator $H$ on a
Hilbert space $\mathcal{H}$. In this case the abstract Paley-Wiener
space is defined as $PW_{[0,\Omega]}(H)=\chi_{[0,\Omega]}(H)\cH$.
Usually $\mathcal{H}=L^{2}(X,\mu)$ and $PW_{[0,\Omega]}(H)$ is a
reproducing kernel Hilbert space. In this situation many authors have
proved the existence of sampling sets~\cite{MR2970038,FFP16,FM11,Pes00,pes01,PesZay09}.
In particular the set-up of \cite{MR2970038,pes99,PesZay09} covers
the case of $H$ being a self-adjoint uniformly elliptic differential
operator on $L^{2}(\rd)$. The construction of sampling sets in these
abstract Paley-Wiener spaces requires some smoothness properties of
functions in $\PW(H)$ and a Bernstein inequality (see \eqref{eq:c11}
below). The result then is that a ``sufficiently dense'' subset
in $X$ is a sampling set and a ``sufficiently sparse'' subset of
$X$ is a set of interpolation. What remained unknown is the existence
of a critical density against which one could compare the quality
of the construction. Theorems~\nameref{tm1}, \nameref{tm3}, and
\nameref{tm2} address this gap for uniformly elliptic differential
operators.%\marginpar{important part}

Once a critical sampling density is established, one may aim for sampling
sets near the critical density. The question of optimal sampling sets
in spectral subspaces is wide open, in fact, it has become meaningful
only after the critical density is known explicitly. This problem
is already difficult for multivariate band-limited functions $PW_{K}=\{f\in\Lii:\supp\hat{f}\subseteq K\}$
for compact spectrum $K\subseteq\rd$ and was solved only recently
in \cite{MM10,OU08}. A possible general approach is via the construction
of Fekete sets and weak limits, as was carried out in \cite{GHOR19}
for Fock spaces with a general weight.

\textbf{Insight for partial differential operators.} Although the
spectral subspaces of a partial differential operator are natural
objects, they seem to have received little attention. To the best
of our knowledge, nothing is known about the nature of the corresponding
reproducing kernel and the behavior of functions in the spectral subspaces
$\PW$. Our investigation reveals several properties of the reproducing
kernel, such as the behavior of its diagonal and some form of off-diagonal
decay. These are key properties for the proofs of Theorems~\nameref{tm1}
and \nameref{tm3}, and we hope that these also hold some interest
for % might
 % also be interesting in the
 % investigation % of 
partial differential operators.

\textbf{Variable bandwidth.} Our original motivation comes from a
new concept of variable bandwidth. In \cite{cpam} we argued that
the spectral subspaces of the Sturm-Liouville operator $-\frac{d}{dx}a\frac{d}{dx}$
on $L^{2}(\r)$ for some function $a>0$ can be taken as spaces with
variable bandwidth. We proved that $a(x)^{-1/2}$ is a measure for
the bandwidth near $x$ (the largest active frequency at position
$x$). The function $a$ thus parametrizes the local bandwidth. For
constant $a=\Omega^{-2}$, the spectral subspace is just the classical
Paley-Wiener space $\PW$. For the special case of an eventually constant
parametrizing function $a$, i.e., $a$ is constant outside an interval
$[-R,R]$, we computed the critical density for sampling in $\PW(-\frac{d}{dx}a\frac{d}{dx})$.
The proof required intricate details of the scattering theory of one-dimensional
Schrödinger operators. Theorem~\nameref{tm3}, formulated for dimension
$d=1$, yields a significant extension of the density theorem for
the sampling of functions of variable bandwidth.%\marginpar{add Coarse geometry?}
\begin{cor*}[E]
\label{cor1} Assume that $a\in C_{b}^{\infty}(\r)$ is bounded,
$a>0$, and $\lim_{x\to\pm\infty}a'(x)=0$. Let $\PW(H_{a})$ be the
Paley-Wiener space associated to $H_{a}$.

If $S$ is a sampling set for $\PW(H_{a})$, then 
\begin{equation}
D_{\nu}^{-}(S)\geq\frac{\Omega^{1/2}}{\pi}\,.\label{eq:17}
\end{equation}
Similarly, if $S$ is a set of interpolation for $\PW(H_{a})$, then
\[
D_{\nu}^{+}(S)\geq\frac{\Omega^{1/2}}{\pi}\,.
\]
\end{cor*}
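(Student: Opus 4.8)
The plan is to deduce Corollary~E as the one-dimensional instance of Theorem~\nameref{tm3}, applied to the Sturm--Liouville operator $H_a=-\tfrac{d}{dx}\,a\,\tfrac{d}{dx}$ on $L^2(\r)$. The first step is to confirm that this $H_a$ satisfies the hypotheses of Theorem~\nameref{tm3}: it is a second-order self-adjoint uniformly elliptic differential operator whose matrix symbol is the scalar function $a$. Uniform ellipticity here amounts to $\inf_x a(x)>0$, which together with $a\in C_b^\infty(\r)$ is the standing assumption of the variable-bandwidth framework and is the only hypothesis of Theorem~\nameref{tm3} not spelled out verbatim in Corollary~E. Furthermore, in dimension one the slow-oscillation requirement $\lim_{|x|\to\infty}|\partial_k a(x)|=0$ is exactly the assumed condition $\lim_{x\to\pm\infty}a'(x)=0$, so the symbol is slowly oscillating in the required sense.

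The second step is to specialize the geometric data of Theorem~\nameref{tm3} to $d=1$. Since the symbol is scalar, $\det a(x)=a(x)$, so the associated measure is $d\nu(x)=(\det a(x))^{-1/2}\,dx=a(x)^{-1/2}\,dx$, which is precisely the measure in~\eqref{eq:17}. The unit ball is the interval $(-1,1)$, hence $|B_1|=2$ and $(2\pi)^d=2\pi$, giving
\begin{equation*}
\frac{|B_1|}{(2\pi)^d}\,\Omega^{d/2}=\frac{2}{2\pi}\,\Omega^{1/2}=\frac{\Omega^{1/2}}{\pi}\,.
\end{equation*}

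With these identifications, part~(i) of Theorem~\nameref{tm3} states exactly that a set $S$ of stable sampling for $\PW(H_a)$ satisfies $D_\nu^-(S)\geq\Omega^{1/2}/\pi$, which is~\eqref{eq:17}; and part~(ii), applied in the same way, yields the stated density bound on $D_\nu^+(S)$ for a set of interpolation. I do not expect any genuine obstacle here: all of the analytic substance --- the reproducing-kernel estimates, the limit-operator and compactification arguments, and the Landau-type comparison --- has already been carried out in the proof of Theorem~\nameref{tm3}, so the only remaining work is the bookkeeping of step one, in particular recording that the one-dimensional operator is uniformly elliptic.
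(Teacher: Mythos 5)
Your proposal is correct and matches the paper's own derivation: the paper likewise obtains Corollary~E by specializing Theorem~\ref{mgen} (Theorem~C) to the one-dimensional Sturm--Liouville operator, with $d\nu(x)=a(x)^{-1/2}\,dx$ and $\tfrac{|B_1|}{2\pi}\Omega^{1/2}=\tfrac{\Omega^{1/2}}{\pi}$, after recording that uniform ellipticity amounts to $\inf_{x}a(x)>0$. The only discrepancy lies in the corollary's statement rather than in your argument: the interpolation bound should read $D_{\nu}^{+}(S)\leq\Omega^{1/2}/\pi$ (the displayed $\geq$ is a typo), which is indeed what your application of part~(ii) produces.
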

\textbf{Methods.} The proofs of Theorems~\nameref{tm1} and \nameref{tm3}
combine ideas and techniques from several areas of analysis.

\emph{Critical density in reproducing kernel Hilbert spaces.} Originally,
density theorems in the style of Landau \textemdash{} and there are
dozens in analysis \textemdash{} were proved from scratch. In our
approach we apply the results on sampling and interpolation in general
reproducing kernel Hilberts spaces from \cite{densrkhs16}. The main
insight % of \cite{GKHR?}
was that it suffices to verify some geometric conditions on the measure
space, such as a doubling condition of the underlying measure, and
of the reproducing kernel, such as some form of off-diagonal decay.
% known as weak localization (WL) and the homogeneous approximation
% property (HAP). See Section~\ref{subsec:Sampling-and-interpolation}.
Once these conditions are satisfied, one obtains the existence of
a critical density and can calculate it in terms of the averaged trace
of the reproducing kernel. Since the geometric conditions are trivially
satisfied for $\rd$, our main technical difficulty is to understand
the reproducing kernel of the spectral subspaces of a self-adjoint
uniformly elliptic differential operator.

\emph{Regularity theory and heat kernel estimates.} To study this
reproducing kernel, we use the fundamental results of the regularity
theory of elliptic differential operators. With these tools we investigate
the smoothness of the reproducing kernel and compare various Sobolev
norms on $\PW(H_{a})$. See Lemma~\ref{lem:Sobolev} and Proposition~\ref{prop:kernbound}.
For an important technical detail (Proposition~\ref{prop:kernbound})
we will need Gaussian estimates for the heat kernel, which we expect
to play a key role in extensions of our theory.

\emph{Limit operators and slowly varying symbols.} To connect asymptotic
properties of the symbol $a$ of a partial differential operator $H_{a}$
to the spectral theory of $H_{a}$, we use the notion of limit operators.
Although we do not use any elaborate results from this theory (see~\cite{Georgescu11,MR2075882,Spakula17}),
limit operators are central to our arguments.

\emph{Higson compactification of $\rd$.} An important structure
underlying the proof of Theorem~C is a compactification of $\rd$,
the so-called Higson compactification. This is the compactification
arising as the maximal ideal space of the $C^*$-algebra of slowly
oscillating functions on $\rd$. By  Gelfand theory every slowly
oscillating function  can be identified with a  continuous function on
the Higson compactification $h\rd$, see, e.g.,
\cite{MR2075882,Roe03,Shteinberg00}. On a technical level we will show
that for slowly oscillating symbols the mapping $x\to T_{-x}k_x$ of
centered reproducing kernels  can be  extended continuously to the
compactification $h\rd$ (Proposition~\ref{prop: higs-cont}). % , and we obtain an
% asymptotic relation between its norm and the determinant of the symbol
% needed for the formulation of Theorem \nameref{tm3}.

\vspace{3mm}

The underlying philosophy is summarized in the following diagram.
We write $T_{x}f(z)=f(x-z)$ for the translation operator and $k_{x}$
for the reproducing kernel of $\PW(H_{a})$. 
\[
{\{T_{-x}a:x\in\rd\}\,\,\mathrm{compact}}\,\,\Longrightarrow\,\,{\{T_{-x}H_{a}T_{x}:x\in\rd\}\,\,\mathrm{compact}}\,\,\Longrightarrow\,\,{\{T_{-x}k_{x}:x\in\rd\}\,\,\mathrm{compact}}
\]
Thus, if $T_{x_{n}}a\to b$ in a suitable topology, then $T_{-x_{n}}H_{a}T_{x_{n}}\to H_{b}$
and the sequence of centered reproducing kernels $T_{-x_{n}}k_{x_{n}}$
converges to the reproducing kernel of $\PW(H_{b})$. In the considered
examples the limit operator $H_{b}$ is simpler than the original
operator $H_{a}$, and this facilitates information about the reproducing
kernel of $\PW(H_{a})$.

\vspace{2mm}

%   \[\begin% {tikzcd} 	{a\in C^\infty_b(R^d)} && {\{T_x a\}_{x\in R^d}\subset\subset C^(R^d)} \\ 	{T_x a\xrightarrow  {C^\infty}b } && {H_{T_x a} f\xrightarrow {W^s_2}H_b f} && {\chi_{[0,\Omega]}(H_{T_x a}) f\xrightarrow {L^2}\chi_{[0,\Omega]}(H_b) f} 	\arrow[shorten <=18pt, shorten >=18pt, Rightarrow, from=1-1, to=1-3] 	\arrow[shorten <=23pt, shorten >=23pt, Rightarrow, from=2-1, to=2-3] 	\arrow[shorten <=21pt, shorten >=21pt, Rightarrow, from=2-3, to=2-5] \end{tikzcd}\]
% The continuity of the operations verifies that $\left\{ H_{T_{x}a}f\colon x\in\r^{d}\right\} $
% is relatively compact in $L^{2}$.

The paper is organized as follows. Section~2 prepares the background
material on regularity theory, symbol classes for partial differential
operators, and \rkhs s. We prove the basic properties of the Paley-Wiener
space $\PW(H_{a})$. Section~3 gives the precise formulation of the
general density theorem for $\PW(H_{a})$. Its proof is given in Sections~4
and 5. In Section~6 we calculate the critical density for sampling
in $\PW(H_{a})$ for the class of slowly varying symbols (Theorems~C
and D). We conclude with an outlook and collect additional material
in the appendix.

\section{Preliminaries}

\subsection{Notation\label{subsec:Notation}}

For a function $f$ on $\rd$ and $x,z\in\rd$ we define the translation
operator $T_{x}f=f\left(z-x\right)$. The open Euclidean ball of radius
$r$ at $x$ is $B_{r}(x)$, and $B_{r}=B_{r}(0)$.

We use standard multiindex notation, thus the differential operator
$D^{\alpha}$ is $\frac{\partial^{|\alpha|}}{\partial x_{1}^{\alpha_{1}}\dots\partial x_{d}^{\alpha_{d}}}$.

We will denote the space of uniformly continuous and bounded functions
on $\rd$ with values in a Banach space $X$ by $C_{b}^{u}\left(\rd,X\right)$.
The indices $c$, $\infty$, and $0$ refer to the subspaces of compactly
supported, smooth, and vanishing-at-infinity functions in $C(\rd)$.
Thus $C_{b}^{\infty}\text{\ensuremath{\left(\rd,X\right)}}$ consists
of all smooth $X$-valued functions with bounded derivatives of all
orders. % ,  $C_0(\rd )$  is the
 % subspace of continuous functions vanishing at infinity.
 % : $\sup_{x\in\rd}\norm[X]{D^{\alpha}f(x)}<C_{\alpha}$.
The space $C^{\infty}(\rd,X)$ has the Fréchet space topology induced
by the seminorms $\abs f_{R,\alpha}=\sup_{x\in B_{R}\left(0\right)}\norm[X]{D^{\alpha}f(x)}$.
If $X=\c$, we % omit $X$ from the notation and
write $\cbinf$, etc.

% Let $\mathcal{S}$ be the space of rapidly decreasing functions on
% $\rd$ and $\mathcal{S}'$ its dual, the space of temeperd
% distributions. \comms{ever needed? $s<0$ needed?}
The Fourier transform of $f\in L^{1}(\rd)$ is 
\[
\mathcal{F}f\left(\omega\right)=(2\pi)^{-d/2}\int_{\rd}f(x)e^{-ix\cdot\omega}dx\,,
\]
and $\cF$ extends to a unitary operator on $\Lii$ as usual. For
every $s\geq0$ the Sobolev space $\sobolev s$ is defined by 
\[
\sobolev s=\left\{ f\in\lrd\colon\norm[\sobolev s]f=\left[(2\pi)^{-d/2}\int_{\rd}\abs{\hat{f}(\omega)}^{2}\left(1+\abs{\omega}^{2}\right)^{s}d\omega\right]^{1/2}<\infty\right\} \,.
\]
% $s\in\r$ the Sobolev space $\sobolev s$ is defined by 
% \[
% \sobolev s=\left\{ f\in\mathcal{S}'\colon\norm[\sobolev s]f=\left[(2\pi)^{-d/2}\int_{\rd}\abs{\hat{f}(\omega)}^{2}\left(1+\abs{\omega}^{2}\right)^{s}d\omega\right]^{1/2}<\infty\right\} \,.
% \]
If $s\in\n$, then $\norm[\sobolev s]f\asymp\sum_{\abs{\text{\ensuremath{\alpha}}}\leq s}\norm{D^{\alpha}f}$.
By the Sobolev embedding theorem, $\sobolev s\hookrightarrow C_{0}(\rd)$
for $s>d/2$, and $\sobolev s$ is a reproducing kernel Hilbert space
with reproducing kernel $T_{x}\kappa$, $x\in\rd$, where $\widehat{\kappa}(\omega)=\widehat{\kappa}_{s}(\omega)=(1+\abs{\omega}^{2})^{-s}$.
This means that $f(x)=\langle f,T_{x}\kappa\rangle_{\sobolev s}$
for $f\in\sobolev s$. See, e.g., \cite{Wendland05}.

\subsection{The generalized Paley-Wiener Space and its basic properties\label{subsec:The-generalized-Paley}}

Pesenson's idea \cite{Pes98,pes01,PesZay09} was to define an abstract
Paley-Wiener space as a spectral subspace associated to an arbitrary
positive, self-adjoint operator $H\geq0$ with domain $\mathcal{D}(\mathcal{H})$
on a Hilbert space $\mathcal{H}$ and a spectral interval $[0,\Omega]$.
Let $\specproj H$ be the spectral projection of $H$, then the generalized
Paley-Wiener space is defined as 
\begin{equation}
\PW\left(H\right)=\specproj H\mathcal{H}\,.\label{eq:PW}
\end{equation}
Equivalently, for a positive, self-adjoint operator, one can define
the Paley-Wiener space $\PW(H)$ by a Bernstein inequality~\cite{grkl10,pes01,PesZay09}:
$f\in\PW(H)$, \fif\ $f\in\mathcal{D}(H^{k})$ for all $k\in\bN$,
and 
\begin{equation}
\bigl\Vert H^{k}f\bigr\Vert_{2}\leq\Omega^{k}\norm{f}\qquad\text{ for all }k\in\bN\,.\label{eq:c11}
\end{equation}

If $H=-\frac{d^{2}}{dx^{2}}$ on $L^{2}(\bR)$ , then 
\[
\PW(H)=\{f\in L^{2}(\bR):\supp\hat{f}\subseteq[-\sqrt{\Omega},\sqrt{\Omega}]\}
\]
is precisely the classical Paley-Wiener space, or in engineering language
the space of \bdl\ functions with bandwidth $2\sqrt{\Omega}$.

\textbf{Convention}. In this work we consider positive, formally self-adjoint
differential expressions $H=H_{a}$ of the form 
\begin{equation}
H_{a}f=-\sum_{j,k=1}^{d}\partial_{j}a_{jk}\partial_{k}f,\quad f\in\sobolev 2.\label{eq:rev1}
\end{equation}
Here the \emph{matrix symbol} $a\in\cbinfd$ is positive definite,
i.e., $a_{jk}=\overline{a_{kj}}\in C_{b}^{\infty}\text{\ensuremath{\left(\rd\right)}}$
and $a(x)\xi\cdot\xi\geq\theta\abs{\xi}^{2}$ for $\xi,x\in\rd$ and
some fixed $\theta>0$. Then $H_{a}$ is a positive, uniformly elliptic
self-adjoint operator on $\rd$ with domain $\dom\left(H_{a}\right)=\sobolev 2$.
In particular $\cinf$ is a core for $H_{a}$, i.e., $H_{a}$ is the
operator closure of $H_{a}|_{\cinf}$. The regularity theory of elliptic
differential operators asserts that for every $k\in\n_{0}$ there
is a $c_{k}\in\r$ such that 
\begin{equation}
H_{a}^{k}+c_{k}\colon\sobolev{2k}\to\Lii\label{eq:ellreg}
\end{equation}
is a Hilbert space isomorphism. See~\cite[Thm 6.3.12]{Zimmer90}
or the standard references \cite{Agmon65,MR1205177}. % \footnote{We
 % ask for easily accessible standard
 % references})
For further use we record the fact that a uniformly elliptic operator
is one-to-one on its domain and thus 
\begin{equation}
0\text{ is not an eigenvalue of }H_{a}.\label{eq:notev}
\end{equation}
For this, recall that $H_{a}f=0$ implies that $f\in C^{\infty}(\rd)$.
Using the ellipticity and $f\in\sobolev 2$, the identity $\langle H_{a}f,f\rangle=\int\sum_{j,k}a_{jk}\partial_{k}f(x)\overline{\partial_{j}f(x)}\,dx=0$
implies that $\partial_{j}f\equiv0$, thus $f=0$. % \comms{
%   It is well known that $H_{a}$satisfies heat kernel estimates \cite{MR2970038},
% and it is certainly well known that under these conditions
% See, e.g., \cite{Ricci05} . 

% We sketch the simple proof: (i) For all $f\in\Lii$
% \begin{align*}
% \lim_{t\to\infty}\inprod{e^{-tH}f,f} & =\lim_{t\to\infty}\int_{\rd\times\rd}p_{t}\left(x,y\right)f\left(x\right)\overline{f\left(y\right)}dxdy\\
%  & \leq\lim_{t\to\infty}\abs{p_{t}\left(x,y\right)}\norm{f}^{2}\leq\lim_{t\to\infty}Ct^{-d/2}\norm{f}^{2}=0.
% \end{align*}
%  On the other hand, by the spectral theorem $\inprod{e^{-tH}f,f}=\int_{0}^{\infty}e^{-\lambda t}d\inprod{E_{\lambda}f,f}$,
% and by Lebesgue's theorem 
% \[
% \lim_{t\to\infty}\inprod{e^{-tH}f,f}=\int_{0}^{\infty}\chi_{\left\{ 0\right\} }d\inprod{E_{\lambda}f,f}=\inprod{\chi_{\left\{ 0\right\} }\left(H\right)f,f},
% \]
% and therefore $\chi_{\left\{ 0\right\} }\left(H\right)=0$.
% }
\begin{rem*}
We regard the mapping $a\mapsto H_{a}$ as a mapping from functions
to operators (a symbolic calculus) and refer to $a$ as the (matrix)
symbol of the operator. This terminology differs slightly from the
usage in PDE, where the (principal) symbol of the differential operator
$\sum_{\abs{\alpha}\leq m}a_{\alpha}D^{\alpha}$ is the function $p\left(x,\xi\right)=\sum_{\abs{\alpha}=m}a_{\alpha}\xi^{\alpha}$
on $\rdd$. For the second order differential operator $H_{a}$ in
\eqref{eq:rev1} the principal symbol is $p\left(x,\xi\right)=a(x)\xi\cdot\xi$.
Since $H_{a}$ is self-adjoint the coefficients $a_{\alpha}$ are
all real for $\abs{\alpha}=2$. % , note that $a_\alpha \neq a_{ij}$ in
% general. 
\end{rem*}
First we verify that $\PW\left(H_{a}\right)$ embeds in every Sobolev
space. 
\begin{lem}
\label{lem:Sobolev} The Paley-Wiener space $\PW(H_{a})$ is continuously
embedded in all Sobolev spaces $\sobolev s,s\geq0,$ and in $C_{0}^{\infty}(\rd)$.
% $PW_{}(H_{a})\hookrightarrow\sobolev s$ holds 
% for every $s>0$. 
As a consequence, on $\PW\left(H_{a}\right)$, the $L^{2}$-norm and
the Sobolev norms are equivalent. 
\end{lem}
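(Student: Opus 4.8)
The plan is to bootstrap the Bernstein inequality \eqref{eq:c11} through the iterated elliptic regularity estimate \eqref{eq:ellreg} and then invoke the Sobolev embedding theorem.

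\emph{First} I would control the even-order Sobolev norms. Let $f\in\PW(H_a)$. By the spectral calculus the function $\lambda\mapsto\lambda^{k}\chi_{[0,\Omega]}(\lambda)$ is bounded, so $f\in\mathcal{D}(H_a^{k})$ for every $k\in\n$ and, as in \eqref{eq:c11}, $\|H_a^{k}f\|_2\le\Omega^{k}\|f\|_2$. Now \eqref{eq:ellreg} tells us that $\mathcal{D}(H_a^{k})=\sobolev{2k}$ and that $H_a^{k}+c_k\colon\sobolev{2k}\to\Lii$ is a Banach space isomorphism, whence $\|f\|_{\sobolev{2k}}\asymp\|(H_a^{k}+c_k)f\|_2$. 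Combining this with the triangle inequality,
\[
\|f\|_{\sobolev{2k}}\;\lesssim\;\|H_a^{k}f\|_2+|c_k|\,\|f\|_2\;\le\;\bigl(\Omega^{k}+|c_k|\bigr)\|f\|_2 ,
\]
so $\PW(H_a)\hookrightarrow\sobolev{2k}$ continuously for every $k\in\n$.

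\emph{Next}, for an arbitrary $s\ge0$ I would pick $k$ with $2k\ge s$; since the $L^{2}$-Sobolev scale is monotone, $\sobolev{2k}\hookrightarrow\sobolev s$ and hence $\PW(H_a)\hookrightarrow\sobolev s$. For the smoothness, fix a multiindex $\alpha$, choose $k$ with $2k-|\alpha|>d/2$, and note $D^{\alpha}f\in\sobolev{2k-|\alpha|}\hookrightarrow C_0(\rd)$ by Sobolev embedding, with $\|D^{\alpha}f\|_\infty\lesssim\|f\|_{\sobolev{2k}}\lesssim\|f\|_2$ by the previous step. Thus $f$ and all its derivatives lie in $C_0(\rd)$, i.e.\ $f\in C_0^{\infty}(\rd)$, and every seminorm of $f$ in $C_0^{\infty}(\rd)$ is dominated by $\|f\|_2$. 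Finally, the norm equivalence on $\PW(H_a)$ is immediate: $\|f\|_2\lesssim\|f\|_{\sobolev s}$ holds for all $f\in\Lii$ since $(1+|\omega|^{2})^{s}\ge1$ when $s\ge0$, and the reverse bound $\|f\|_{\sobolev s}\lesssim\|f\|_2$ on $\PW(H_a)$ is exactly what the first two steps established.

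\emph{The one genuinely substantive point} is the identification, used in the first step, of the operator-theoretic power $H_a^{k}$ (the power of the self-adjoint operator $H_a$ that enters the Bernstein inequality) with the differential operator $H_a^{k}$ of \eqref{eq:ellreg}, together with the equality $\mathcal{D}(H_a^{k})=\sobolev{2k}$. This is precisely the content of the iterated a priori estimate of elliptic regularity theory, which is packaged into \eqref{eq:ellreg}; so in the write-up it suffices to cite \eqref{eq:ellreg}, and everything else is formal.
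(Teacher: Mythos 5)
Your proof is correct and follows essentially the same route as the paper: combine Bernstein's inequality \eqref{eq:c11} with the iterated elliptic regularity isomorphism \eqref{eq:ellreg} to control $\|f\|_{\sobolev{2k}}$ by $\|f\|_2$, then pass to all $s\ge0$ and to $C_0^\infty(\rd)$ via Sobolev embedding. You simply spell out a few steps the paper compresses into one line (the monotone interpolation to non-integer $s$, the bound on $\|D^\alpha f\|_\infty$, and the identification $\mathcal{D}(H_a^k)=\sobolev{2k}$), which is fine.
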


\begin{proof}
Let $f\in\PW\left(H_{a}\right)$ and $k\in\n$. By elliptic regularity
and Bernstein's inequality \eqref{eq:c11}, $\norm[{\sobolev{2k}}]{f}\asymp\norm{(H^{k}+c_{k})f}\leq(\Omega^{k}+\abs{c_{k}})\norm{f}\,.$
Consequently, $f\in\bigcap_{k\in\n}\sobolev{2k}=\bigcap_{s\geq0}\sobolev s\subseteq C_{0}^{\infty}(\rd)$
via the Sobolev embedding. 
\end{proof}
Next we show that $\PW\left(H_{a}\right)$ is a reproducing kernel
Hilbert space in $\lrd$. Recall that a reproducing kernel Hilbert
space $\mathcal{H}$ is a Hilbert space of functions defined on a
set $X$ such that $f\left(x\right)=\inprod{f,k_{x}}_{\mathcal{H}}$
for all $f\in\mathcal{H}$ and $x\in X$. We write $k(x,y)=\overline{k_{x}(y)}$
for the reproducing kernel of $\cH$. See, e.g. \cite{Aron50}. 
\begin{prop}
\label{prop:kernbound} % The Paley-Wiener space $\PW\left(H_{a}\right)$
% embeds in $C_{b}^{\infty}\text{\ensuremath{\left(\rd\right)}}.$
 There exists a reproducing kernel $k_{x}\in\PW(H_{a})$, such that
$\specproj{H_{a}}f\left(x\right)=\inprod{f,k_{x}}$ for all $f\in\Lii$
and all $x\in\rd$. In addition, there are positive constants $c,C$,
such that 
\begin{equation}
0<c\leq\norm{k_{x}}\leq C\quad\text{for all }x\in\rd\,.\label{eq:kernelbound}
\end{equation}
\end{prop}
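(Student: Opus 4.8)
\emph{Plan of proof.} The statement bundles three ingredients of quite unequal difficulty: the existence of the kernel $k_x$, the upper bound $\norm{k_x}\le C$, and the uniform lower bound $\norm{k_x}\ge c$. The first two I would dispatch immediately from Lemma~\ref{lem:Sobolev}. By that lemma $\PW(H_a)$ embeds continuously into $\sobolev s$ for every $s\ge 0$ with operator norm depending only on $H_a$, and $\sobolev s\hookrightarrow C_0(\rd)$ for $s>d/2$; hence there is a constant $C>0$, \emph{independent of} $x$, with $\abs{f(x)}\le C\norm f$ for all $f\in\PW(H_a)$ and $x\in\rd$. By Riesz representation there is a unique $k_x\in\PW(H_a)$ with $f(x)=\inprod{f,k_x}$ for $f\in\PW(H_a)$; writing $P=\specproj{H_a}$ for the orthogonal projection onto $\PW(H_a)$, self-adjointness of $P$ gives $(Pf)(x)=\inprod{Pf,k_x}=\inprod{f,k_x}$ for all $f\in\Lii$, which is the reproducing identity. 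Applying the uniform pointwise bound to $f=k_x$ gives $\norm{k_x}^2=k_x(x)\le C\norm{k_x}$, so $\norm{k_x}\le C$ for all $x$, which is the upper bound in \eqref{eq:kernelbound}.

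The hard part is the uniform lower bound, equivalently the statement that the diagonal $k(x,x)=\norm{k_x}^2$ stays bounded away from $0$. Since no Fourier/translation structure is available for variable coefficients, I would route the argument through the heat semigroup $e^{-tH_a}$, whose integral kernel $p_t(x,y)$ is symmetric and positive, satisfies the semigroup property, and — for a uniformly elliptic divergence-form operator — obeys Aronson's two-sided Gaussian bounds; in particular $p_t(x,\cdot)\in\Lii$ with $\norm{p_t(x,\cdot)}^2=p_{2t}(x,x)$. Fix $t>0$ and let $\psi_t(\lambda)=e^{t\lambda}\specproj\lambda$, a bounded function, so $\psi_t(H_a)$ is bounded and self-adjoint and $e^{-tH_a}\psi_t(H_a)=P$ by the functional calculus. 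A short computation using self-adjointness of $\psi_t(H_a)$ and the reality of $p_t(x,\cdot)$ shows $\inprod{f,\psi_t(H_a)p_t(x,\cdot)}=(e^{-tH_a}\psi_t(H_a)f)(x)=(Pf)(x)=\inprod{f,k_x}$ for every $f\in\Lii$, hence $k_x=\psi_t(H_a)\,p_t(x,\cdot)$. Set $\phi=Pp_t(x,\cdot)\in\PW(H_a)$ and let $\mu_x$ be the spectral measure of $H_a$ for the vector $p_t(x,\cdot)$, a positive measure on $[0,\infty)$ of total mass $p_{2t}(x,x)$. The reproducing identity and Cauchy--Schwarz give $\phi(x)\le\norm\phi\,\norm{k_x}$, while $Pk_x=k_x$, $k_x=\psi_t(H_a)p_t(x,\cdot)$ and $e^{t\lambda}\ge 1$ for $\lambda\ge 0$ give
\[
\phi(x)=\inprod{p_t(x,\cdot),k_x}=\int_{0}^{\Omega}e^{t\lambda}\,d\mu_x(\lambda)\ \ge\ \mu_x([0,\Omega])=\norm\phi^2 .
\]
Combining the two estimates yields $\norm{k_x}\ge\norm\phi=\mu_x([0,\Omega])^{1/2}$, so everything reduces to a uniform lower bound for $\mu_x([0,\Omega])$.

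For this last step I would use the two-sided Gaussian bounds, valid for all $t>0$ with constants $c_1,C_1$ depending only on $d$ and the ellipticity constants: $c_1 t^{-d/2}\le p_t(x,x)\le C_1 t^{-d/2}$. Since $\mu_x([0,\Omega])=p_{2t}(x,x)-\mu_x((\Omega,\infty))$ and, for $0<s<t$, the identity $p_t(x,\cdot)=e^{-sH_a}p_{t-s}(x,\cdot)$ gives $e^{sH_a}p_t(x,\cdot)=p_{t-s}(x,\cdot)\in\Lii$ and hence $\int_0^\infty e^{2s\lambda}\,d\mu_x(\lambda)=p_{2(t-s)}(x,x)$, one obtains (using $1\le e^{2s(\lambda-\Omega)}$ for $\lambda>\Omega$)
\[
\mu_x((\Omega,\infty))\le e^{-2s\Omega}\int_0^\infty e^{2s\lambda}\,d\mu_x(\lambda)=e^{-2s\Omega}p_{2(t-s)}(x,x)\le C_1\,e^{-2s\Omega}\,(2(t-s))^{-d/2}.
\]
Taking $s=t/2$ gives $\mu_x([0,\Omega])\ge t^{-d/2}\bigl(c_1 2^{-d/2}-C_1 e^{-t\Omega}\bigr)$, which is $\ge\tfrac12 c_1 2^{-d/2}t^{-d/2}>0$ once $t$ is chosen large enough in terms of $\Omega$, $d$ and the ellipticity constants; fixing such a $t$ yields $\norm{k_x}\ge c>0$ for all $x$.

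The main obstacle is thus concentrated entirely in the lower bound: the soft functional analysis reduces it to bounding $\mu_x([0,\Omega])^{1/2}$, but doing so uniformly in $x$ forces one to control the spectral tail of $p_t(x,\cdot)$ above $\Omega$ and, crucially, to know the \emph{lower} on-diagonal Gaussian estimate $p_t(x,x)\gtrsim t^{-d/2}$ for the variable-coefficient operator. This is exactly the point at which parabolic/elliptic regularity theory (Aronson's estimates) is indispensable, and it is why heat kernel methods, rather than purely operator-theoretic ones, are needed here.
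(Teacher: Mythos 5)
Your proposal is correct, and for the existence of $k_x$ and the upper bound it coincides with the paper's argument (Sobolev embedding via Lemma~\ref{lem:Sobolev}, Riesz representation, and $k_x(x)=\norm{k_x}^2\le C\norm{k_x}$). For the lower bound the paper also goes through the heat semigroup and the same two-sided on-diagonal estimates $c_1t^{-d/2}\le p_t(x,x)\le C_1t^{-d/2}$, but the bookkeeping differs. The paper (following \cite[Lemma 3.19]{MR2970038}, reproduced in its appendix) works entirely with diagonals of integral kernels: it first bounds $\chi_{[0,\Lambda]}(H_a)(x,x)\le C\Lambda^{d/2}$ by dominating $\chi_{[0,\Lambda]}$ with $e\cdot e^{-u/\Lambda}$, then uses the dyadic decomposition $e^{-tu}\le\chi_{[0,\Omega]}(u)+\sum_{k\ge0}\chi_{[0,2^{k+1}\Omega]}(u)e^{-t2^k\Omega}$ to squeeze $k^{\Omega}(x,x)$ between $p_t(x,x)$ and a geometrically decaying tail, finally choosing $t=2^r/\Omega$ large. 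You instead factor the reproducing kernel as $k_x=\psi_t(H_a)p_t(x,\cdot)$, reduce the claim to $\norm{k_x}\ge\norm{Pp_t(x,\cdot)}=\mu_x([0,\Omega])^{1/2}$ via the reproducing identity and Cauchy--Schwarz, and control the spectral tail $\mu_x((\Omega,\infty))$ by a single exponential Chebyshev bound combined with the semigroup identity $\int e^{2s\lambda}\,d\mu_x=p_{2(t-s)}(x,x)$. This replaces the dyadic sum by one Markov-type inequality and is, if anything, a little cleaner; the price is the same in both cases, namely the \emph{lower} Gaussian on-diagonal bound for the variable-coefficient heat kernel, which neither argument can avoid. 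One small point to make explicit: the identity $\inprod{g,p_t(x,\cdot)}=(e^{-tH_a}g)(x)$ and hence $k_x=\psi_t(H_a)p_t(x,\cdot)$ uses that $p_t$ is real and symmetric, which holds here because the principal symbol $a$ is real symmetric (as the paper remarks after its Convention).
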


% Step 1: If $s>d/2$ and $f\in\PW\left(H_{a}\right)$ then, using Lemma
% \ref{lem:Sobolev},
% \[
% \abs{D^{\alpha}f\left(x\right)}=\abs{\inprod{D^{\alpha}f,T_{x}\kappa}_{\sobolev s}}\leq C\norm[\sobolev s]{D^{\alpha}f}\leq C'\norm[\sobolev{s+\abs{\alpha}}]f\leq C''\norm f\,,
% \]
% and this implies the first statement.
\begin{proof}
Let $f\in\PW\left(H_{a}\right)$ and $s>d/2$. By Lemma \ref{lem:Sobolev},
$f\in\sobolev s$ and $\norm{f}\asymp\norm[\sobolev s]{f}$. Since
$\sobolev s$ is a \rkhs, we obtain 
\begin{equation}
\abs{f\left(x\right)}=\abs{\inprod{f,T_{x}\kappa}_{\sobolev s}}\leq\norm[\sobolev s]{T_{x}\kappa}\,\norm[\sobolev s]f\leq C\,\|f\|_{2}\,.\label{eq:l2inlinf}
\end{equation}
Thus $\PW\left(H_{a}\right)$ is a reproducing kernel Hilbert space
with kernel $k_{x}\in\PW(H_{a})$.

For the lower bound in (\ref{eq:kernelbound}) we do not have a proof
based exclusively on regularity theory. Instead we refer to \cite[Lemma 3.19]{MR2970038}
where the lower bound for the reproducing kernel was derived by means
of heat kernel estimates. %  of $H_{a}$, for which Gaussian estimates are
% well known (a standard reference is {*}{*}).
As some details and notation differ, we reproduce the proof in the
appendix. 
\end{proof}
\begin{prop}
\label{cor:normcont} The mapping $x\mapsto k_{x}$ is continuous
from $\rd$ to $\sobolev s$, $s\geq0$. 
\end{prop}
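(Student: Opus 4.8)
The plan is to prove continuity of $x\mapsto k_x$ into $\sobolev s$ by reducing it to the continuity of the family of operators $T_{-x}H_aT_x$ (equivalently, to a form of continuity of the resolvent or the spectral projection in the parameter $x$), using the reproducing property together with the norm equivalence from Lemma~\ref{lem:Sobolev}. First I would fix $x_0\in\rd$ and a sequence $x_n\to x_0$, and observe that by Lemma~\ref{lem:Sobolev} it suffices to prove $\norm[\sobolev s]{k_{x_n}-k_{x_0}}\to 0$ for a single $s>d/2$, since all the Sobolev norms are equivalent on $\PW(H_a)$ and $k_{x_n}-k_{x_0}\in\PW(H_a)$. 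For such $s$, the evaluation functional is bounded, so by the $\sobolev s$ reproducing kernel $T_x\kappa$ one has the pointwise bound $|g(x)|\le\norm[\sobolev s]{T_x\kappa}\norm[\sobolev s]g$; combined with the norm equivalence this gives $|g(x_n)-g(x_0)|\le C(|g(x_n)-g(x_0)|)$ — more usefully, for $g\in\PW(H_a)$, $g\in C_0(\rd)$, so $g(x_n)\to g(x_0)$. Hence for every fixed $g\in\PW(H_a)$, $\langle g,k_{x_n}\rangle=g(x_n)\to g(x_0)=\langle g,k_{x_0}\rangle$, i.e.\ $k_{x_n}\rightharpoonup k_{x_0}$ weakly in $L^2$ (and in $\PW(H_a)$).

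The remaining and main point is to upgrade weak convergence to norm convergence. Since $\norm{k_{x_n}}$ is bounded (Proposition~\ref{prop:kernbound}), it suffices to show $\limsup_n\norm{k_{x_n}}\le\norm{k_{x_0}}$, or equivalently $\norm{k_{x_n}}\to\norm{k_{x_0}}$. Now $\norm{k_x}^2=k(x,x)=k_x(x)$, so what must be shown is that the diagonal of the reproducing kernel, $x\mapsto k(x,x)$, is continuous. I would establish this as follows: write $k(x,x)=\langle \specproj{H_a}\delta_x,\delta_x\rangle$ informally, but rigorously use that $k_x=\specproj{H_a}(T_x\kappa)$ viewed correctly — more cleanly, note $k_x(y)=\langle T_x\kappa, T_y\kappa\rangle_{\text{something}}$ is not quite right, so instead argue via the resolvent/functional calculus: $\specproj{H_a}$ can be written as a contour integral (or, using a smooth cutoff $\psi$ with $\psi\equiv 1$ on $[0,\Omega]$, $\psi$ supported in $[-1,\Omega+1]$, we have $k_x=\specproj{H_a}\psi(H_a)T_x\kappa$ with $\kappa=\kappa_s$ since $\psi(H_a)T_x\kappa$ already lies in $\PW(H_a)$ — no, this also needs care). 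The cleanest route is: $k(x,y)$ is the integral kernel of the bounded operator $\specproj{H_a}$ on $L^2(\rd)$, and by elliptic regularity plus the Sobolev embedding this kernel is jointly continuous on $\rd\times\rd$ (indeed smooth, since $\specproj{H_a}$ maps $L^2$ into $\bigcap_s\sobolev s$ in each variable by the argument of Lemma~\ref{lem:Sobolev} applied symmetrically); joint continuity of $k(x,y)$ immediately gives continuity of $x\mapsto k(x,x)=\norm{k_x}^2$.

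Putting the pieces together: $k_{x_n}\rightharpoonup k_{x_0}$ weakly in the Hilbert space $\PW(H_a)$ and $\norm{k_{x_n}}\to\norm{k_{x_0}}$, hence $k_{x_n}\to k_{x_0}$ in $L^2$-norm, and therefore by the norm equivalence of Lemma~\ref{lem:Sobolev} also in every $\sobolev s$-norm. This proves the proposition for all $s\ge 0$ at once. The main obstacle I anticipate is making the joint continuity (or smoothness) of $(x,y)\mapsto k(x,y)$ fully rigorous: one must justify that the spectral projection $\specproj{H_a}$ has a continuous integral kernel, which follows from iterating the mapping property \eqref{eq:ellreg} — namely $\specproj{H_a}\colon L^2\to\sobolev s$ boundedly for every $s$ (this is exactly the content of the computation in Lemma~\ref{lem:Sobolev}, since the range of $\specproj{H_a}$ is $\PW(H_a)$), combined with the fact that $\specproj{H_a}$ is self-adjoint, so it maps $\sobolev{-s}\to L^2$ as well, and the Schwartz kernel theorem together with Sobolev embedding in both variables yields $k\in C_0^\infty(\rd\times\rd)$. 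An alternative that avoids the kernel theorem altogether is to prove continuity of $x\mapsto k_x$ directly by showing $\langle k_{x_n}-k_{x_0},k_{x_n}-k_{x_0}\rangle=k(x_n,x_n)-k(x_n,x_0)-k(x_0,x_n)+k(x_0,x_0)\to 0$, which again reduces to joint continuity of $k$; I would present whichever of these is shortest given the tools already set up.
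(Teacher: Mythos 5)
Your argument is correct, but it is organized differently from the paper's and is longer than necessary. The paper's entire proof is the identity you relegate to your closing ``alternative'': by Lemma~\ref{lem:Sobolev} it suffices to control the $L^{2}$-norm, and $\norm{k_{x}-k_{y}}^{2}=k_{x}(x)-k_{x}(y)-k_{y}(x)+k_{y}(y)\to0$ as $y\to x$. Contrary to what you write, this does \emph{not} reduce to joint continuity of $k$: the term $k_{x}(x)-k_{x}(y)$ vanishes because $k_{x}\in C_{0}^{\infty}(\rd)$, and the term $k_{y}(x)-k_{y}(y)$ is bounded by $\sup_{z}\norm[\infty]{\nabla k_{z}}\,\abs{x-y}\leq C\sup_{z}\norm{k_{z}}\,\abs{x-y}$ by the uniform Sobolev bounds of Lemma~\ref{lem:Sobolev} together with Proposition~\ref{prop:kernbound}; so only continuity of each $k_{x}$ plus this equicontinuity is needed, both of which are already available. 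Your main route --- weak convergence of $k_{x_{n}}$ (which you prove cleanly) plus convergence of the norms --- is valid, but the continuity of the diagonal $x\mapsto k(x,x)$ is precisely where all the work sits, and your proposed proof of it, via the mapping property $\specproj{H_{a}}\colon\sobolev{-s}\to\sobolev s$ and the Schwartz kernel theorem, invokes heavier machinery than the problem requires. (If you do set up that mapping property, the shortest path is to observe that $y\mapsto\delta_{y}$ is norm-continuous into $\sobolev{-s}$ for $s>d/2$ by dominated convergence on the Fourier side, and $k_{y}=\specproj{H_{a}}\delta_{y}$; this gives the proposition in one line, with no detour through weak convergence or the kernel theorem.)
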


\begin{proof}
Since $k_{x}\in\PW(H_{a})\subset C_{0}^{\infty}$$\left(\rd\right)$,
we obtain 
\begin{align*}
\norm[\sobolev s]{k_{x}-k_{y}}^{2}\leq C\norm{k_{x}-k_{y}}^{2} & =C\left(k_{x}\left(x\right)-k_{x}\left(y\right)-k_{y}\left(x\right)+k_{y}\left(y\right)\right)\to0
\end{align*}
for $y\to x$, where the first inequality follows from % by the equivalence of $L^{2}$-
% and Sobolev norms on $\PW$$\left(H_{a}\right)$, see
Lemma \ref{lem:Sobolev}. 
\end{proof}

\subsection{Sampling and interpolation in $\protect\PW(H_{a})$ and the Beurling
Densities.\label{subsec:Sampling-and-interpolation}}

% We specialize some definitions from \cite{densrkhs16} to the current
% setup: $S\subset\rd$ is a \emph{set of sampling}, if $\{k_{s}:s\in S\}$
% is a frame for $\PW\left(H_{a}\right)$, and $S$ is a \emph{set of
% interpolation} if $\{k_{s}\colon s\in S\}$ is a Riesz sequence in
% $\PW\left(H_{a}\right)$.%
% \begin{comment}
% i.e., if there exist $A,B>0$ such that 
% \begin{equation}
% A\|c\|^{2}\leq\|\sum_{s\in S}c_{s}k_{s}\|_{2}^{2}\leq B\|c\|^{2}\qquad\text{ for all }c\in\ell^{2}(S).\label{eq:e2}
% \end{equation}
% \end{comment}
% \begin{comment}
% A set $S\subseteq\rd$ is called \emph{relatively separated} if there
% exists $\rho_{0}>0$ such that for all $\rho\geq\rho_{0}$, there
% exists $C=C_{\rho}>0$ such that: 
% \begin{equation}
% \#(S\cap B_{\rho}(x))\leq C\mu(B_{\rho}(x)),\qquad x\in\rd.\label{eq:def_rel_sep}
% \end{equation}
% \end{comment}
Let $\mu$ be a Borel measure on $\rd$ that is equivalent to Lebesgue
measure in the sense that $d\mu=h\,dx$ for a measurable function
$h$ with $0<c<h(x)<C$ for all $x\in\rd$.

The lower Beurling density of $S$ with respect to $\mu$ is defined
as 
\begin{equation}
D_{\mu}^{-}(S)=\liminf_{r\to\infty}\inf_{x\in\rd}\frac{\#(S\cap B_{r}(x))}{\mu(B_{r}(x))}\,,\label{eq:2low}
\end{equation}
and the upper Beurling density of $S$ is 
\begin{equation}
D_{\mu}^{+}(S)=\limsup_{r\to\infty}\sup_{x\in\rd}\frac{\#(S\cap B_{r}(x))}{\mu(B_{r}(x))}\,.\label{eq:2up}
\end{equation}
If $d\mu=dx$ we omit the subscript and write $D^{\pm}\left(S\right)$.

For sampling in reproducing kernel Hilbert spaces the relevant measure
is $d\mu\left(x\right)=k\left(x,x\right)dx$. We call the Beurling
density with respect to this measure the dimension-free density and
write $D_{0}^{\pm}(S)$ for $D_{\mu}^{\pm}(S)$ .

We say that the reproducing kernel $k$ of a \rkhs\ $\cH$ satisfies
the \emph{weak localization} property (WL), if \\
 \label{WL}\emph{(WL)}: for every $\varepsilon>0$ there is a constant
$r=r(\varepsilon)$, such that 
\begin{equation}
\sup_{x\in\rd}\int_{\rd\setminus B_{r}(x)}\abs{k(x,y)}^{2}d\mu(y)<\varepsilon^{2}.\tag{{WL}}\label{eq:hapcont}
\end{equation}
The discrete analog of the weak localization is the so-called \emph{homogeneous
approximation property} (HAP) of the reproducing kernel: %, if the following holds:

\emph{(HAP)}:\label{HAP} Assume that $S$ is such that $\{k_{s}:s\in S\}$
is a \emph{Bessel sequence} for $\cH$, % $\PW\left(H_{a}\right)$,
i.e., $S$ satisfies the upper sampling inequality % there
% is a $C>0$ such that 
$\sum_{s\in S}\abs{f\left(s\right)}^{2}\leq C\norm f^{2}$ for all
$f\in\cH$. % \PW$$\left(H_{a}\right)$
Then for every $\varepsilon>0$ there is a constant $r=r(\varepsilon)$,
such that 
\begin{equation}
\sup_{x\in\rd}\sum_{s\in S\setminus B_{r}(x)}\abs{k(x,s)}^{2}<\varepsilon^{2}\,.\tag{{HAP}}\label{eq:hapclass}
\end{equation}

In general, an upper sampling inequality implies that %  Recall that a set $S\subset\rd$
% is \emph{relatively separated} if
for some (and hence all) $\rho>0$ % there exists $\rho_{0}>0$ such that
% for all $\rho\geq\rho_{0}$, there exists a constant $C$ such that
% for all $x\in\rd$
\[
\max_{x\in\rd}\#(S\cap B_{\rho}(x))<\infty\,.%\leqC.
\]
We call such a set $S$ \emph{relatively separated}. See~\cite[Lemma~3.7]{densrkhs16}.

The two localization properties \eqref{eq:hapcont} and \eqref{eq:hapclass}
are the key properties of the reproducing kernel required for an abstract
density theorem to hold. For \rkhs s embedded in $\lrd$ this can
be stated as follows~\cite[Cor.~4.1]{densrkhs16} . 
\begin{thm}
\label{abstrdens} Let $\cH\subseteq L^{2}(\rd,\mu)$ be a \rkhs\ with
kernel $k$. Assume that $k$ satisfies the boundedness property \eqref{eq:kernelbound}
on the diagonal, the weak localization (\ref{eq:hapcont}) and the
homogeneous approximation property (\ref{eq:hapclass}). % Let $S\subseteq\rd$
% be relatively separated.\\

(i) If $S$ is a sampling set for $\cH$, then $D_{0}^{-}(S)\geq1$and

(ii) If $S$ is an interpolating set for $\cH$, then $D_{0}^{+}(S)\leq1$
.
\end{thm}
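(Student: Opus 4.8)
The statement is the special case of the abstract density theorem of \cite[Cor.~4.1]{densrkhs16} in which the underlying measure space is $(\rd,\mu)$, and the cleanest route is to deduce it from there; the plan of that proof is a Landau--Ramanathan--Steger trace comparison, which I now sketch. Throughout, $P_\cH$ denotes the orthogonal projection of $L^2(\rd,\mu)$ onto $\cH$, and for a measurable set $E\subseteq\rd$ the operator $M_{\chi_E}$ is multiplication by its indicator; recall that $D_0^{\pm}$ is the Beurling density with respect to the measure $\mu_0$ determined by $\mu_0(E)=\operatorname{tr}(P_\cH M_{\chi_E}P_\cH)=\int_E k(x,x)\,d\mu(x)$. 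For a Euclidean ball $U=B_R(x_0)$ I would introduce the \emph{concentration operator} $T_U=P_\cH M_{\chi_U}P_\cH$ on $\cH$: it is positive, self-adjoint, trace class, satisfies $0\le T_U\le\mathrm{Id}$ and $\operatorname{tr}T_U=\mu_0(U)$, and its quadratic form is $\langle T_Uf,f\rangle=\|\chi_Uf\|_{L^2(\mu)}^2$ for $f\in\cH$. The diagonal bound \eqref{eq:kernelbound} makes $\mu_0$ comparable to Lebesgue measure, hence doubling, so that for each fixed $r$ the boundary shell $\mu_0(B_{R+r}(x_0)\setminus B_{R-r}(x_0))$ is $o(\mu_0(B_R(x_0)))$ as $R\to\infty$, uniformly in $x_0$.

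First I would establish the clustering of the spectrum of $T_U$ near $\{0,1\}$, via the kernel identity
\[
\operatorname{tr}(T_U-T_U^2)=\int_U\!\int_{\rd\setminus U}\abs{k(x,y)}^2\,d\mu(y)\,d\mu(x)\,.
\]
Fixing $\varepsilon>0$ with $r=r(\varepsilon)$ as in \eqref{eq:hapcont}, the inner integral is $<\varepsilon^2$ when $x\in B_{R-r}(x_0)$ by weak localization and is always $\le k(x,x)$, so the shell contributes at most $\mu_0(B_R(x_0)\setminus B_{R-r}(x_0))$; hence $\operatorname{tr}(T_U-T_U^2)\le\varepsilon^2\mu_0(U)+o(\mu_0(U))$, and $T_U$ therefore has only $O(\varepsilon\mu_0(U))+o(\mu_0(U))$ eigenvalues in the plunge region $[\varepsilon,1-\varepsilon]$. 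Let $V_U\subseteq\cH$ be the span of the eigenfunctions of $T_U$ with eigenvalue $\ge1-\varepsilon$; then $(1-O(\varepsilon))\mu_0(U)-o(\mu_0(U))\le\dim V_U\le(1-\varepsilon)^{-1}\mu_0(U)$, and each $f\in V_U$ is concentrated on $U$ in the sense that $\|\chi_{\rd\setminus U}f\|^2\le\varepsilon\|f\|^2$. For part~(i), let $S$ be a set of sampling, so that $\{k_s:s\in S\}$ is a frame for $\cH$ and $S$ is relatively separated, and put $U'=B_{R+r}(x_0)$. The heart of the matter is to show that every $f\in V_U$ can be approximated to within $\delta(\varepsilon)\|f\|$, with $\delta(\varepsilon)\to0$ \emph{uniformly in $R$}, by an element of $\operatorname{span}\{k_s:s\in S\cap U'\}$; this is exactly what the homogeneous approximation property \eqref{eq:hapclass} is built to supply, together with \eqref{eq:hapcont} and the relative separation of $S$. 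Granting it, the orthogonal projection $P_\Lambda$ of $\cH$ onto $\operatorname{span}\{k_s:s\in S\cap U'\}$ is bounded below on $V_U$, hence injective there, so $\dim V_U\le\operatorname{rank}P_\Lambda\le\#(S\cap U')$; dividing by $\mu_0(B_R(x_0))$ and letting first $R\to\infty$ and then $\varepsilon\to0$ gives $D_0^-(S)\ge1$. Part~(ii) is dual: if $S$ is a set of interpolation then $\{k_s:s\in S\}$ is a Riesz sequence, the bounded interpolation operator produces for each finite $\Lambda\subseteq S\cap B_{R-r}(x_0)$ a Riesz sequence in $\cH$, with bounds independent of $\Lambda$, that is concentrated on $U=B_R(x_0)$ by the same localization estimates, and comparing its cardinality $\#\Lambda$ with $\dim V_U$ gives $\#(S\cap B_{R-r}(x_0))\le\mu_0(B_R(x_0))+o(\mu_0(B_R(x_0)))$, hence $D_0^+(S)\le1$.

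The genuine obstacle is the approximation step in each direction, that is, converting the discrete data $\{f(s):s\in S\setminus U'\}$ (and its interpolation-side analogue) back into an $L^2(\mu)$ estimate whose loss is small \emph{and independent of the scale $R$}: a naive bound merely reproduces the Bessel constant of $\{k_s:s\in S\}$, which is useless here, whereas the uniform estimate needs the careful bookkeeping that combines \eqref{eq:hapcont}, \eqref{eq:hapclass} and the relative separation of $S$ over a fixed-multiplicity cover of $\rd$ by unit balls. All of this is carried out in \cite{densrkhs16}, and since the geometric hypotheses required there --- a doubling measure, and balls with asymptotically negligible boundary shells --- hold trivially on $\rd$ by \eqref{eq:kernelbound}, I would simply invoke \cite[Cor.~4.1]{densrkhs16} for the complete proof. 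The remaining steps --- the Ky~Fan / min--max bookkeeping, the boundary estimates, and the passage to the limit in $R$ --- are then routine.
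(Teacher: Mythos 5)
The paper does not prove this theorem; it is quoted verbatim from \cite[Cor.~4.1]{densrkhs16}, and your proposal ultimately does the same, after correctly sketching the Landau--Ramanathan--Steger concentration-operator argument (trace and trace-of-square comparison, eigenvalue clustering via weak localization, dimension count against $\#(S\cap U')$ via the HAP) that underlies the cited result. Your sketch is consistent with that reference and you rightly identify the uniform approximation step as the only nontrivial point, so invoking the citation is exactly what the paper does.
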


This result holds under a set of natural assumptions on metric measure
spaces and conditions on the reproducing kernel. We will not dwell
on the geometric conditions, e.g., doubling measure, as these are
clearly satisfied for $\rd$ with $\mu$ equivalent to %a measure  equivalent to
Lebesgue measure. We want to verify Theorem \ref{abstrdens} for $\cH=\PW$$\left(H_{a}\right)$
for a suitable class of symbols $a$. %
\begin{comment}
Thus we have to verify \eqref{eq:kernelbound} and the two properties
(WL) and (HAP) of the reproducing kernel of $\PW(H_{a})$. 
\end{comment}
The boundedness of the diagonal of the kernel was already established
in Proposition~\ref{prop:kernbound}, \eqref{eq:kernelbound}. To
prove Theorems B and C we therefore need to verify the properties
(\ref{eq:hapcont}) and (\ref{eq:hapclass}) for the \rkhs\ $\PW(H_{a})$.

Observe that (\ref{eq:hapcont}) is equivalent to the condition 
\begin{equation}
\sup_{x\in\rd}\int_{\rd\setminus B_{r}(0)}\abs{T_{-x}k_{x}(y)}^{2}dy<\varepsilon^{2}\label{eq:rk}
\end{equation}
for the \emph{centered} reproducing kernels. We will show the stronger
statement that the set $\left\{ T_{-x}k_{x}\colon x\in\rd\right\} $
is relatively compact in $\Lii$. The Riesz-Kolomogorov compactness
theorem then implies \eqref{eq:rk} and thus (\ref{eq:hapcont}).

The proof of (\ref{eq:hapclass}) requires some additional local regularity
of $k_{x}$. We will use prominently the elliptic regularity theory
to show that $\left\{ T_{-x}k_{x}\colon x\in\rd\right\} $ is relatively
compact in all Sobolev spaces $\sobolev s$. For proof of (\ref{eq:hapclass})
it is fundamental that the point evaluation on $\PW$$\left(H_{a}\right)$
can be expressed twofold as 
\begin{equation}
f(x)=\langle f,k_{x}\rangle_{L^{2}}=\langle f,T_{x}\kappa\rangle_{\sobolev s}\quad\text{for all }f\in\PW\left(H_{a}\right)\,.\label{eq:c12-1}
\end{equation}
% As an important intermediate step we use Corollary \ref{cor:rc} asserting
% the relative compactness of $\left\{ T_{-x}H_{a}T_{x}f\colon x\in\rd\right\} $
% for all $f\in\cinf$.

\begin{comment}
Observe that formally $T_{-x}k_{x}=T_{-x}\specproj{H_{a}}T_{x}\delta_{0}=\specproj{H_{T_{-x}a}}\delta_{0}$.
If the symbol $a\in\cbinfd$ it follows from the Heine Borel property
of $C^{\infty}\left(\rd\right)$ that $\left\{ T_{-x}H_{a}T_{x}f\colon x\in\rd\right\} =\left\{ H_{T_{-x}a}f\colon x\in\rd\right\} $
is relatively compact in $\Lii$ for every $f\in\cinf$ . (In Corollary
\ref{cor:rc} we actually prove relative compactness in every Sobolev
space $\sobolev s$.) If every Spectral theory allows us to infer
that $\left\{ T_{-x}\specproj{H_{a}}T_{x}f\colon x\in\rd\right\} $
is relatively compact in $\Lii$, if

A standard limiting procedure (Lemma \ref{lem:-uniformly-in}) 
\end{comment}

\subsection{Classes of symbols, limit operators}

First we define the relevant symbol classes. Let 
\begin{equation}
\auto x{H_{a}}=T_{-x}H_{a}T_{x}=H_{T_{-x}a}\,,\label{eq:auto}
\end{equation}
be the conjugation of $H_{a}$ by the translation $T_{x}$. If $a\in\cbinfd$
observe that $\auto x{H_{a}}$ is again a self-adjoint, uniformly
elliptic operator with domain $\sobolev 2$ and core $\cinf$. In
this section we describe symbol classes that ensure that $\left\{ \tau_{x}(H_{a})f\colon x\in\rd\right\} $
is relatively compact in $\Lii$ for all $f\in\cinf$. Equivalently,
every sequence $\auto{x_{k}}{H_{a}}f$ has a norm convergent subsequence.
If $\left(x_{k}\right)$ is bounded, this follows from the continuity
of $x\mapsto T_{x}f$. To treat unbounded sequences we need some terminology. 
\begin{defn}
\label{def:limop}Assume $a\in\cbinfd$. If the net $\left(x_{\lambda}\right)_{\lambda\in\Lambda}\subset\rd$
diverges to infinity and there is an operator $H\in\mathcal{B}\left(\sobolev 2,\Lii\right)$
such that $\lim_{\lambda}\auto{x_{\lambda}}{H_{a}}f=Hf$ for all $f\in\cinf$,
then we call $H$ a \emph{limit operator} of $H_{a}$. 
\end{defn}

\begin{rem}
(i) Existence and uniqueness of the limit operator follow from the
theorem of Banach-Steinhaus.

(ii) We do not even scratch the surface of the method of limit operators:
see, amongst many others, \cite{MR2075882,Spakula17,Rabinovich04b},
in $C^{*}$-algebra setting \cite{DavGeorg13,Georgescu11,Georgescu18}.

(iii) Limit operators are related to compactifications of $\rd$.
An example can be found in Section \ref{subsec:The-Higson-compactification}. 
\end{rem}

\subsubsection{Compact orbits}

Identity~\eqref{eq:auto} suggests that compactness properties of
$\left\{ \auto x{H_{a}}\colon x\in\rd\right\} $ are related to compactness
properties of $\left\{ T_{-x}a\colon x\in\rd\right\} $, so we investigate
these first. In Section \ref{sec:Geometric-Beurling-Densities} we
will deal with a  non-metrizable  compactification of $\rd$, 
therefore we formulate most  results for nets
$\left(x_{\lambda}\right)_{\lambda\in\Lambda}$ instead of sequences.
\begin{lem}
\label{prop:Arzela} (i) If $f\in C_{b}^{\infty}\left(\rd\right)$
then % every sequence $\left(x_{k}\right)\subset\rd$
%has  a subsequence $\left(y_{k}\right)\subset\rd$ such that $T_{y_{k}}f$
%converges locally uniformly to a (bounded, continuous) limit function
%$\tilde{f}$. In other words,
 $\left\{ T_{x}f\colon x\in\rd\right\} $ is relatively compact in
the Fréchet space $C^{\infty}(\rd)$ with respect to its topology
of uniform convergence of all derivatives on compact sets.

(ii) In particular, if $\lim_{\lambda}T_{x_{\lambda}}f=g$ pointwise,
then $\lim_{\lambda}T_{x_{\lambda}}f=g$ in $C^{\infty}\left(\rd\right)$.
The limit function $g$ is in $\cbinf$ again. 
\end{lem}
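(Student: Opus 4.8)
The plan is to deduce both parts from the Arzelà--Ascoli theorem applied on balls, exhausting $\rd$ by the sequence of balls $B_N(0)$, $N\in\n$, together with a diagonal argument. First I would fix a multiindex $\alpha$ and a radius $R>0$ and observe that, since $f\in\cbinf$, the family $\{D^\alpha(T_x f)\colon x\in\rd\}=\{T_x(D^\alpha f)\colon x\in\rd\}$ is uniformly bounded on $B_R(0)$ by $\sup_{y\in\rd}|D^\alpha f(y)|<\infty$, and it is equicontinuous on $B_R(0)$ because each member has gradient bounded by $\sup_{y}|\nabla D^\alpha f(y)|=\sup_y\sum_{j}|D^{\alpha+e_j}f(y)|<\infty$, which is finite again by $f\in\cbinf$; hence the family is uniformly Lipschitz on $B_R(0)$, in particular equicontinuous. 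By Arzelà--Ascoli, $\{D^\alpha(T_x f)\colon x\in\rd\}$ is relatively compact in $C(B_R(0))$ for every $\alpha$ and every $R$.

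Next I would assemble these per-$(\alpha,R)$ compactness statements into relative compactness in $C^\infty(\rd)$. The topology on $C^\infty(\rd)$ is metrizable, generated by the countable family of seminorms $|g|_{R,\alpha}$ with $R\in\n$ and $\alpha$ ranging over multiindices, so it suffices to show that every sequence $(T_{x_k}f)_k$ has a subsequence converging in each seminorm. Enumerate the pairs $(\alpha,R)$ as $(\alpha^{(m)},R^{(m)})$, $m\in\n$; by the previous paragraph and a Cantor diagonal extraction one obtains a single subsequence $(x_{k_j})_j$ along which $D^{\alpha^{(m)}}(T_{x_{k_j}}f)$ converges uniformly on $B_{R^{(m)}}(0)$ for every $m$. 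The uniform limits on overlapping balls are consistent, so they define a function $g$ with $D^\alpha g$ continuous for all $\alpha$, i.e.\ $g\in C^\infty(\rd)$, and $T_{x_{k_j}}f\to g$ in $C^\infty(\rd)$. This proves (i). Moreover, for each $\alpha$, $|D^\alpha g(y)|=\lim_j|D^\alpha f(x_{k_j}-y)|\leq\sup_z|D^\alpha f(z)|$, so all derivatives of $g$ are bounded, i.e.\ $g\in\cbinf$.

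For (ii): suppose $\lim_\lambda T_{x_\lambda}f=g$ pointwise. By (i) the net $(T_{x_\lambda}f)$ lies in a relatively compact subset of the metrizable space $C^\infty(\rd)$; I claim it converges to $g$ there. If not, there is a seminorm $|\cdot|_{R,\alpha}$, an $\varepsilon>0$, and a subnet $(T_{x_{\lambda'}}f)$ with $|T_{x_{\lambda'}}f-g|_{R,\alpha}\geq\varepsilon$ for all $\lambda'$. By relative compactness this subnet has a further subnet converging in $C^\infty(\rd)$ to some $h$; but $C^\infty(\rd)$-convergence implies pointwise convergence, and the original net already converges pointwise to $g$, so $h=g$, contradicting $|T_{x_{\lambda'}}f-g|_{R,\alpha}\geq\varepsilon$. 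Hence $T_{x_\lambda}f\to g$ in $C^\infty(\rd)$, and $g\in\cbinf$ by the bound established above (applied to the net or to any sequence extracted from it whose pointwise limit is $g$). The only mild subtlety is the passage between nets and sequences: relative compactness and metrizability of $C^\infty(\rd)$ let me run the Arzelà--Ascoli and diagonal arguments with sequences while still concluding statements about nets in (ii), so this is where I would be slightly careful, but it is not a genuine obstacle.
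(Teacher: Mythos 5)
Your proof is correct. Part (ii) is essentially identical to the paper's argument (the "every subnet has a subnet converging to $g$" criterion, with the limit identified by pointwise convergence). For part (i) the routes differ slightly in packaging: the paper simply invokes the Heine--Borel property of the Fr\'echet space $C^{\infty}(\rd)$ (Rudin, Thm.~1.46) and reduces the claim to the trivial observation that $\{T_{x}f\colon x\in\rd\}$ is bounded in every seminorm $\abs{\cdot}_{R,\alpha}$ because $f\in\cbinf$; you instead reprove that compactness from scratch via Arzel\`a--Ascoli on each ball for each derivative followed by a Cantor diagonal extraction. The two are mathematically the same (Heine--Borel for $C^{\infty}(\rd)$ is proved exactly by your argument), so your version buys self-containedness at the cost of length, while the paper's buys brevity by citation. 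Your handling of the net-versus-sequence issue via metrizability of $C^{\infty}(\rd)$ is sound, and the closing observation that all derivatives of the limit inherit the global bounds of $f$, so that $g\in\cbinf$, matches the paper.
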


\begin{proof}
(i) The space $C^{\infty}(\rd)$ has the Heine Borel property \cite[1.46]{Rudin73},
so it suffices to verify that $\left\{ T_{x}f\colon x\in\rd\right\} $
is bounded in $C^{\infty}(\rd)$, which means that 
\[
\norm[L^{\infty}(B_{r}(0))]{D^{\alpha}T_{x}f}<C_{\alpha,r}\,\text{ for all }x\in\rd\text{ and all }r>0,\alpha\in\n^{d}.
\]
But this is trivial for $f\in C_{b}^{\infty}(\rd)$, since all derivatives
are globally bounded.

(ii) We apply the following observation: A net converges to a limit
$g$ if and only if every subnet has a subnet that converges to $g$.
By (i) every subnet of $\left(T_{x_{\lambda}}f\right)_{k\in\n}$ has
a subnet $\left(T_{z_{\lambda}}f\right)_{k\in\n}$ that converges
in $C^{\infty}\left(\rd\right)$ (to the limit function $g$). We
conclude that $\left(T_{x_{\lambda}}f\right)_{k\in\n}$ converges
to $g$ in $C^{\infty}\left(\rd\right)$. As all functions and their
derivatives of all orders are bounded and continuous, this is true
for the limit as well. 
\end{proof}
\begin{prop}
\label{strongcomp}Let $a\in\cbinfd,\,k,m\in\n_{0}$, and assume $\lim_{\lambda}T_{-x_{\lambda}}a=b$
pointwise. Then, for every $f\in\sobolev{2m+2k}$ 
\begin{equation}
\lim_{\lambda}\norm[\sobolev{2m}]{\left(\auto{x_{\lambda}}{H_{a}^{k}}-H_{b}^{k}\right)f}=0\,.\label{eq:limopcont}
\end{equation}
\end{prop}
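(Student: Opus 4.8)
The plan is to reduce the statement to a multiplier estimate for multiplication operators on Sobolev spaces, after first upgrading the pointwise convergence of the symbols to convergence in $C^\infty(\rd)$. First, I would apply Lemma~\ref{prop:Arzela}(ii) to each matrix entry of the symbol: since $T_{-x_\lambda}a\to b$ pointwise, in fact $T_{-x_\lambda}a\to b$ in $C^\infty(\rd,\c^{d\times d})$, i.e.\ uniformly together with all partial derivatives on every ball, and $b\in\cbinfd$. Second, I would expand the differential expression $H_c^k$ for an arbitrary matrix symbol $c$: iterating $H_c u=-\sum_{j,l}c_{jl}\,\partial_j\partial_l u-\sum_{j,l}(\partial_j c_{jl})\,\partial_l u$ gives
\[
H_c^k=\sum_{|\alpha|\le 2k}p_\alpha[c]\,D^\alpha,
\]
where each coefficient $p_\alpha[c]$ is one fixed polynomial expression in the entries of $c$ and finitely many of their partial derivatives. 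Two consequences: for every fixed $N$, $\|p_\alpha[T_{-x_\lambda}a]\|_{C_b^N}$ is bounded uniformly in $\lambda$ (by a polynomial in $\|a\|_{C_b^{N'}}$, using $\|T_{-x_\lambda}a\|_{C_b^{N'}}=\|a\|_{C_b^{N'}}$); and since multiplication and differentiation are continuous on the Fr\'echet space $C^\infty(\rd)$, $p_\alpha[T_{-x_\lambda}a]\to p_\alpha[b]$ in $C^\infty(\rd)$. Hence the functions $g_\lambda^\alpha:=p_\alpha[T_{-x_\lambda}a]-p_\alpha[b]$ are bounded in every $C_b^N(\rd)$ uniformly in $\lambda$ and converge to $0$ in $C^\infty(\rd)$.

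With this in hand, using $\auto{x_\lambda}{H_a^k}=T_{-x_\lambda}H_a^kT_{x_\lambda}=H_{T_{-x_\lambda}a}^k$, I would write
\[
\bigl(\auto{x_\lambda}{H_a^k}-H_b^k\bigr)f=\sum_{|\alpha|\le 2k}g_\lambda^\alpha\,D^\alpha f ,
\]
and note that for $f\in\sobolev{2m+2k}$ and $|\alpha|\le 2k$ one has $D^\alpha f\in\sobolev{2m}$ (on the Fourier side, $|\omega^\alpha|^2(1+|\omega|^2)^{2m}\le(1+|\omega|^2)^{2k+2m}$). So it suffices to prove the multiplier estimate: if $g_\lambda\to0$ in $C^\infty(\rd)$ with $\sup_\lambda\|g_\lambda\|_{C_b^{2m}}<\infty$ and $h\in\sobolev{2m}$, then $\|g_\lambda h\|_{\sobolev{2m}}\to0$. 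Since $m\in\n_0$, $\|\cdot\|_{\sobolev{2m}}\asymp\sum_{|\beta|\le 2m}\|D^\beta(\cdot)\|_2$, and the Leibniz rule reduces the task to showing $\|(D^\gamma g_\lambda)\,u\|_2\to0$ for each $|\gamma|\le 2m$ and $u\in L^2(\rd)$ (one takes $u=D^{\beta-\gamma}h$). Given $\varepsilon>0$, I would pick $R$ with $\int_{|x|>R}|u|^2<\varepsilon$ and split $\int|D^\gamma g_\lambda|^2|u|^2$ over $B_R$ and its complement: the part over $B_R$ is at most $\|D^\gamma g_\lambda\|_{L^\infty(B_R)}^2\|u\|_2^2\to0$ by local uniform convergence, and the part over the complement is at most $\bigl(\sup_\lambda\|g_\lambda\|_{C_b^{2m}}\bigr)^2\varepsilon$; letting $\varepsilon\to0$ proves the estimate, and summing the finitely many terms indexed by $\alpha,\beta,\gamma$ yields \eqref{eq:limopcont}.

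The one genuinely delicate point is the clash between two modes of convergence: Lemma~\ref{prop:Arzela} supplies only \emph{local} uniform convergence of the symbol and its derivatives --- globally the translates $T_{-x_\lambda}a$ need not converge uniformly --- whereas the target $\sobolev{2m}$-norm is an $L^2$-norm over all of $\rd$. The uniform-in-$\lambda$ bounds on every derivative of $T_{-x_\lambda}a$, which are free since $a\in C_b^\infty$, are exactly what bridges the gap: they let the $L^2$-tail of $D^{\beta-\gamma}h$ absorb the region where only local convergence is available, so the argument is of dominated-convergence type. Everything else is bookkeeping. A variant organization, for anyone wishing to avoid the explicit expansion of $H_c^k$, is to induct on $k$ via $\auto{x_\lambda}{H_a^k}-H_b^k=\auto{x_\lambda}{H_a^{k-1}}\bigl(\auto{x_\lambda}{H_a}-H_b\bigr)+\bigl(\auto{x_\lambda}{H_a^{k-1}}-H_b^{k-1}\bigr)H_b$, together with the uniform-in-$\lambda$ boundedness of $\auto{x_\lambda}{H_a^{k-1}}\colon\sobolev{s+2k-2}\to\sobolev s$; but the direct route above is shorter.
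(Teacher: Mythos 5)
Your proof is correct, but it organizes the passage to general $f\in\sobolev{2m+2k}$ differently from the paper. The paper first proves \eqref{eq:limopcont} for $f\in\cinf$, where compact support makes the $L^2$ bound a trivial consequence of local uniform convergence (multiply the sup-norm by $\abs{\supp f}^{1/2}$), and then extends to all of $\sobolev{2m+2k}$ by a density argument: the operators $H_{a^{(\lambda)}}^k$ are uniformly bounded $\sobolev{2m+2k}\to\sobolev{2m}$ because translation preserves operator norms, and a standard lemma (the paper cites Teschl) transfers strong convergence from a core. You instead work directly with a general $f$ and replace the density step by a dominated-convergence-type splitting: after reducing via Leibniz to terms $\|(D^\gamma g_\lambda)\,u\|_2$ with $u\in L^2$, you break the integral over $B_R$ and its complement, using local uniform convergence on the ball and the uniform $C_b^N$ bounds (which hold because translation is a $C_b^N$-isometry) on the tail, which the $L^2$-smallness of $u$ outside $B_R$ makes negligible. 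The two arguments rest on exactly the same two ingredients — local uniform convergence from Lemma~\ref{prop:Arzela} and uniform derivative bounds from translation invariance — so they are morally the same; yours is slightly more self-contained (no density lemma is quoted), while the paper's is marginally shorter in bookkeeping. For the power $k$ the paper inducts via the factorization $H_a^k-H_b^k=H_a^{k-1}(H_a-H_b)+(H_a^{k-1}-H_b^{k-1})H_b$, whereas you expand $H_c^k$ explicitly; you note the inductive alternative yourself, and both are routine. No gaps.
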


\begin{proof}
We treat the case $k=1$ first and assume for the moment that $f\in\cinf$.
Set $a^{\left(\lambda\right)}=T_{-x_{\lambda}}a$. We can express
$H_{a}$ in the form $H_{a}=\sum_{\abs{\beta}\leq2}a_{\beta}D^{\beta}$
with coefficients $a_{\beta}\in\cbinf$, and estimate, for every multindex
$\alpha$ with $|\alpha|\leq2m$, 
\[
\abs{D^{\alpha}\left(H_{a^{(\lambda)}}-H_{b}\right)f}=\abs{D^{\alpha}\sum_{\abs{\beta}\leq2}(a_{\beta}^{(\lambda)}-b_{\beta})D^{\beta}f}=\abs{\sum_{\abs{\beta}\leq2}\sum_{\abs{\gamma}\leq\abs{\alpha}}\binom{\alpha}{\gamma}D^{\gamma}\left(a_{\beta}^{(\lambda)}-b_{\beta}\right)D^{\alpha-\gamma+\beta}f}\,.
\]
By Lemma \ref{prop:Arzela} we have $\lim_{\lambda}D^{\alpha}a^{(\lambda)}=D^{\alpha}\text{\ensuremath{b}}$
uniformly on compact sets, so the convergence is actually uniform
on $\supp f$, and thus 
\begin{equation}
\lim_{\lambda}\norm[\infty]{D^{\alpha}\left(H_{a^{(\lambda)}}-H_{b}\right)f}=0\,.\label{eq:cinfc}
\end{equation}
Consequently 
\begin{align}
\norm[\sobolev{2m}]{\left(H_{a^{(\lambda)}}-H_{b}\right)f} & \leq C\max_{\abs{\alpha}\leq2m}\norm{D^{\alpha}\left(H_{a^{(\lambda)}}-H_{b}\right)f}\label{eq:cinfconv}\\
 & \leq C\abs{\supp f}^{1/2}\max_{\abs{\alpha}\leq2m}\norm[\infty]{D^{\alpha}\left(H_{a^{(\lambda)}}-H_{b}\right)f}\to0\,.\nonumber 
\end{align}
% \comms{$\partial _jb= 0$ is not even necessary for this proof?}
As $\cinf$ is dense in $\sobolev{2m+2}$, and the operators $H_{a^{\left(\lambda\right)}}$
are uniformly bounded from $\sobolev{2m+2}$ to $\sobolev{2m}$, a
standard density argument (see, e.g., \cite[Lemma 1.14]{Teschl09})
implies $\norm[\sobolev{2m}]{\left(H_{a^{(\lambda)}}-H_{b}\right)f}\to0$
for all $f\in\sobolev{2m+2}$.

For $k>1$ observe that 
\[
H_{a}^{k}f-H_{b}^{k}f=H_{a}^{k-1}\left(H_{a}f-H_{b}f\right)+\left(H_{a}^{k-1}f-H_{b}^{k-1}f\right)H_{b}f\,.
\]
As $\lim_{\lambda}\norm[\sobolev{2m}]{\left(H_{a^{(\lambda)}}-H_{b}\right)f}=0$
for $f\in\sobolev{2m+2}$, the result follows by induction on $k$. 
\end{proof}
% \begin{rem*}[For later use (e.g. Laplacian on graphs)]
%  Hidden in the proof (actually used in the last argument) is the
% fact that $H_{a}$ is a \emph{local} operator: $\supp H_{a}f=\supp f$
% for every $f\in\cinf$. This condition can be weakened, and this might
% be useful in more general geometries, e.g. for operators on graphs. 
% \end{rem*}
\begin{rem*}
The statement of the proposition and its proof are valid under the
following more general conditions: $a_{\lambda},b\in\cbinf$, $a_{\lambda}\xrightarrow{C^{\infty}}b$,
and $\left(H_{a_{\lambda}}\right)$ is uniformly bounded from $\sobolev 2$
to $\Lii$. 
\end{rem*}
Though not needed in the sequel, we state an interesting corollary
that shows how compactness properties of the orbit $\{T_{x}a:x\in\rd\}$
are transferred to compactness properties of $\{\tau_{x}(H_{a}):x\in\rd\}$. 
\begin{cor}
\label{cor:rc} If $a\in\cbinfd$ and $f\in\cinf$ the set $\left\{ \auto x{H_{a}}f\colon x\in\rd\right\} $
is relatively compact in every Sobolev space $\sobolev s$, $s>0$. 
\end{cor}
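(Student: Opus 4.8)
The plan is to reduce the statement to the relative compactness of the orbit of the symbol together with Proposition \ref{strongcomp}. Fix $f\in\cinf$ and $s>0$; choose $m,k\in\n_0$ with $2m\geq s$ and large enough that $f\in\sobolev{2m+2k}$ automatically (any $m,k$ work since $f$ is smooth and compactly supported). It suffices to show that every sequence $\bigl(\auto{x_n}{H_a}f\bigr)_{n\in\n}$ has a subsequence converging in $\sobolev{2m}$, because $\sobolev{2m}\hookrightarrow\sobolev s$ continuously. If the sequence $(x_n)$ is bounded, then after passing to a subsequence $x_n\to x$, and norm convergence $\auto{x_n}{H_a}f=H_{T_{-x_n}a}f\to H_{T_{-x}a}f$ in $\sobolev{2m}$ follows from the continuity of $y\mapsto T_y g$ in $C^\infty(\rd)$ (Lemma \ref{prop:Arzela}) applied to the finitely many coefficient functions of $H_a$, exactly as in the estimate \eqref{eq:cinfconv} — or more directly by applying Proposition \ref{strongcomp} with $b=T_{-x}a$.

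If $(x_n)$ is unbounded, pass to a subsequence diverging to infinity. By Lemma \ref{prop:Arzela}(i) the orbit $\{T_{-x}a:x\in\rd\}$ is relatively compact in $C^\infty(\rd,\c^{d\times d})$ (the coordinatewise statement), so after a further subsequence $T_{-x_n}a\to b$ in $C^\infty$, hence in particular pointwise, with $b\in\cbinfd$. Proposition \ref{strongcomp} (with $k=1$ and our chosen $m$, noting $f\in\sobolev{2m+2}$) then gives
\[
\norm[\sobolev{2m}]{\bigl(\auto{x_n}{H_a}-H_b\bigr)f}\to 0\,,
\]
so the subsequence converges in $\sobolev{2m}$, and therefore in $\sobolev s$. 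This proves relative compactness.

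There is really no serious obstacle here: the corollary is a packaging of the two ingredients already in place — the Heine--Borel property of $C^\infty(\rd)$ (which makes the symbol orbit relatively compact) and Proposition \ref{strongcomp} (which transports $C^\infty$-convergence of symbols to $\sobolev{2m}$-convergence of the conjugated operators applied to a fixed smooth compactly supported function). The only point to be slightly careful about is bookkeeping of the Sobolev indices, i.e.\ choosing $m$ with $2m\geq s$ and invoking $\sobolev{2m}\hookrightarrow\sobolev s$; and separating the bounded and unbounded cases for the sequence $(x_n)$, though in fact Proposition \ref{strongcomp} handles both uniformly once one knows $T_{-x_n}a$ converges pointwise along a subsequence.
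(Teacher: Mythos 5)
Your proof is correct and follows essentially the same route as the paper, which compresses the argument to: the orbit $\{T_{-x}a\}$ is relatively compact in $C^\infty(\rd)$ by the Heine--Borel property (Lemma \ref{prop:Arzela}), and Proposition \ref{strongcomp} makes $a'\mapsto H_{a'}f$ continuous from the $C^\infty$-closure of that orbit into $\sobolev s$, so the image is relatively compact. Your version unpacks this into a sequential argument with a bounded/unbounded case split, but — as you note yourself at the end — Proposition \ref{strongcomp} handles both cases uniformly once $T_{-x_n}a$ converges pointwise, so the case split is merely expository, not a different argument.
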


\begin{proof}
The set $\left\{ T_{x}a\colon x\in\rd\right\} $ is relatively compact
in $C^{\infty}\left(\rd\right)$ by Lemma \ref{prop:Arzela}, and
Proposition \ref{strongcomp} says that the mapping $a\mapsto H_{a}f$
is continuous from \linebreak[3] $\overline{\left\{ T_{x}a\colon x\in\rd\right\} }^{C^{\infty}\left(\rd\right)}$
to $\sobolev s$. 
\end{proof}

\subsubsection{Slowly oscillating symbols.\label{subsec:Slowly-oscillating-symbols.}}

In the next step we single out a subclass of operators for which the
spectral theory is sufficiently simple. In our approach it is essential
that the limit operators do not have the endpoint $0$ and $\Omega$
of the spectrum as eigenvalues. % \comms{ Remember that by \eqref{eq:notev} 0 is never an eigenvalue of
%   $H_a$.}
The limits of translates of \emph{slowly oscillating} symbols are
constant, if they exist (Lemma \ref{prop:constlim} below), so the
limit operators are similar to the Laplacian. This will be used in
Section \ref{sec:Geometric-Beurling-Densities} to compute the critical
density. % Some other choices for symbol classes
% will be mentioned in Section \ref{sec:Outlook}.

\begin{defn}
\label{def:slow}% Let $X$ be a Banach space.
An $X$-valued function $f\in C_{b}^{u}\left(\rd,X\right)$ is slowly
oscillating~\footnote{In the literature $f$ is also called ``of vanishing oscillation
at infinity\textquotedblright{} or a Higson function}, if for all compact subsets $M\subset\rd$ 
\[
\lim_{\abs x\to\infty}\sup_{m\in M}\norm[X]{f(x)-f(x+m)}=0\,.
\]
% \begin{comment}
% (Instead of arbitrary compact subsets of $\rd$ the system of all
% closed balls can be used.)
% \end{comment}
{} In fact, it suffices to use the closed unit ball $\overline{B_{1}}$
instead of arbitrary compact $M$.

We denote the space of all slowly oscillating functions on $\rd$
by $\sox X$ and define $\soix X=\sox X\cap\cbinfx X$.

The space $\so$ with the $\norm[\infty]{\phantom{.}}$-norm and pointwise
multiplication is a commutative $C^{*}$-subalgebra of $C_{b}^{u}\left(\rd\right)$
.

We will need the following characterization of $\soix X$. Though
the statement is folklore, we do not know a formal reference. For
completeness we sketch the simple proof. 
\end{defn}

\begin{lem}
\label{lem:charsoi}A function $f$ is in $\soix X$ if and only if
$f\in\cbinfx X$ and \linebreak[4] $\lim_{\abs x\to\infty}\partial_{k}f\left(x\right)=0$
for all $1\le k\le d$. 
\end{lem}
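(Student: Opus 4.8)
The plan is to prove both implications of the characterization of $\soix X$ by reducing everything to the defining property of slow oscillation applied on the unit ball, combined with the mean value theorem. Recall we must show: for $f \in \cbinfx X$, $f$ is slowly oscillating if and only if $\lim_{|x|\to\infty}\partial_k f(x)=0$ for all $1\le k\le d$.

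First I would prove the direction ``$\partial_k f \to 0$ implies $f \in \soix X$.'' Fix $m\in\overline{B_1}$ and $x\in\rd$. By the fundamental theorem of calculus along the segment from $x$ to $x+m$,
\[
f(x+m)-f(x)=\int_0^1 \sum_{k=1}^d m_k\,\partial_k f(x+tm)\,dt,
\]
so $\norm[X]{f(x+m)-f(x)}\le |m|\,\sup_{t\in[0,1]}\sum_{k=1}^d \norm[X]{\partial_k f(x+tm)}$. Since $|x+tm|\ge |x|-1$ for $t\in[0,1]$ and $m\in\overline{B_1}$, the hypothesis $\partial_k f\to 0$ at infinity makes the right-hand side go to $0$ uniformly in $m\in\overline{B_1}$ as $|x|\to\infty$. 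By the remark in Definition~\ref{def:slow} that it suffices to test on $\overline{B_1}$, this gives $f\in\soix X$.

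For the converse, ``$f\in\soix X$ implies $\partial_k f\to 0$,'' the idea is that a smooth function whose increments over unit vectors become small cannot have a persistently large derivative, because its higher derivatives are globally bounded (here is where $f\in\cbinfx X$ is used). Concretely, fix $k$ and write $e_k$ for the $k$-th standard basis vector. For small $h>0$, Taylor expansion gives $f(x+he_k)-f(x)=h\,\partial_k f(x)+\tfrac12 h^2\,\partial_k^2 f(\xi)$ for some point $\xi$ on the segment, hence
\[
\norm[X]{\partial_k f(x)}\le \frac{1}{h}\,\norm[X]{f(x+he_k)-f(x)}+\frac{h}{2}\sup_{z\in\rd}\norm[X]{\partial_k^2 f(z)}.
\]
Given $\varepsilon>0$, first choose $h$ small enough that the second term is below $\varepsilon/2$; then, since $f$ is slowly oscillating (testing with $M=\{he_k\}$, or rescaling $M=\overline{B_1}$), there is $R$ so that $\norm[X]{f(x+he_k)-f(x)}<h\varepsilon/2$ for $|x|\ge R$. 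Thus $\norm[X]{\partial_k f(x)}<\varepsilon$ for $|x|\ge R$, proving $\partial_k f\to 0$ at infinity.

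I do not expect a serious obstacle here; the only point requiring mild care is the converse direction, where one must exploit the global bound on the \emph{second} derivatives to convert a control on increments into a control on first derivatives — a routine Landau–Kolmogorov type interpolation argument. One should also note at the end that the limit function $b$ of translates $T_{-x_\lambda}a$ is automatically constant (as recorded in Lemma~\ref{prop:constlim}), which is the consequence actually used later, but that is a separate statement and not part of this lemma's proof. \qed
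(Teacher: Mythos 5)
Your proof is correct. The forward implication is essentially the paper's argument: both express the increment $f(x+m)-f(x)$ as an integral of first derivatives (the paper telescopes along the coordinate axes, you integrate along the straight segment; either way the hypothesis $\partial_k f\to 0$ gives uniform smallness for $m\in\overline{B_1}$). The converse is where you genuinely diverge: the paper argues by contradiction, using uniform continuity of $\partial_k f$ to find a ball on which $\|\partial_k f\|>K/2$ and then the mean value theorem to produce a non-small increment; you instead prove the direct quantitative inequality $\norm[X]{\partial_k f(x)}\le h^{-1}\norm[X]{f(x+he_k)-f(x)}+\tfrac{h}{2}\sup_z\norm[X]{\partial_k^2 f(z)}$ and optimize over $h$. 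Your route is arguably cleaner: it is constructive, and it makes explicit where the hypothesis $f\in\cbinfx X$ enters (through the bound on second derivatives), whereas the paper uses it only implicitly via the uniform continuity of $\partial_k f$. One small caveat applies to both proofs: for $X$-valued functions the Lagrange form of the Taylor remainder (a single intermediate point $\xi$), like the equality form of the mean value theorem in the paper, is not literally available; you should derive your displayed inequality from the integral form of the remainder, $f(x+he_k)-f(x)-h\,\partial_k f(x)=\int_0^h (h-t)\,\partial_k^2 f(x+te_k)\,dt$, which gives exactly the bound you state. With that cosmetic fix the argument is complete.
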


\begin{proof}
Assume that $f\in\cbinfx X$ and $\lim_{\abs x\to\infty}\partial_{k}f\left(x\right)=0$
for all $1\le k\le d$ and choose $M=[-h,h]^{d}.$ Writing $m\in M$
as $m=\sum_{k=1}^{d}h_{k}e_{k}$ with $|h_{k}|\leq h$, the difference
in Definition \ref{def:slow} is 
\[
f\left(x+\sum_{k=1}^{d}h_{k}e_{k}\right)-f(x)=\sum_{k=0}^{d-1}\int_{x+\sum_{l\leq k}h_{l}e_{k}}^{x+\sum_{l\leq k+1}h_{l}e_{k}}\partial_{k+1}f\,.
\]
This implies that $\sup_{m\in M}\norm[X]{f\left(x+m\right)-f(x)}\to0$
for $|x|\to\infty$.

Conversely, assume that $f\in\soix X$. If $\lim_{\abs x\to\infty}\partial_{k}f\left(x\right)\neq0$
for some $k$, then there exist $K>0$ and sequence $\left(x_{j}\right)_{j\in\n}\subset\rd$,
$\abs{x_{j}}\to\infty$, such that $\left\Vert \partial_{k}f\left(x_{j}\right)\right\Vert _{X}>K$,
and so there is a ball $B$ such that $\|\partial_{k}f\left(x\right)\|>K/2$
for $x\in x_{j}+B$. Then $\left\Vert f(x_{j}+\delta e_{k})-f(x_{j})\right\Vert _{X}=\delta\left\Vert \partial_{k}f\left(\xi\right)\right\Vert \geq\delta K/2$
for some $\xi\in x_{j}+B$. This contradicts $f\in\sox X$. 
\end{proof}

\begin{example}
 A typical  example of a genuinely slowly oscillating function is $a(x) =
\sin |x|^{1/2} (1-\phi (x))$ for some $\phi \in C^\infty _c(\rd )$ with
$\phi (x) = 1$ near $0$. (The cut-off of the singularity at $0$ serves
to make all derivatives of $a$ bounded, but, of course, it is
immaterial for the asymptotic behavior.) 
\end{example}
Our interest in $\sox X$ stems from the following fact (cf.~\cite[Prop.~2.4.1]{MR2075882}): 
\begin{lem}
\label{prop:constlim} Assume $f\in\sox X$ and $\left(x_{\lambda}\right)_{\lambda\in\Lambda}\subset\rd$
diverges to infinity, $\abs{x_{\lambda}}\to\infty$. If $\lim_{\lambda}T_{-x_{\lambda}}f(x)=g(x)$
exists for all $x\in\rd$, then g is constant. 
\end{lem}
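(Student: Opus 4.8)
The plan is to show that the limit function $g$ has vanishing gradient everywhere, and conclude it is constant by connectedness of $\rd$. First I would record that the pointwise limit $g(x) = \lim_\lambda T_{-x_\lambda}f(x) = \lim_\lambda f(x + x_\lambda)$ exists by hypothesis. Fix $x\in\rd$ and $1\le k\le d$. For any $\delta>0$ we have
\[
g(x+\delta e_k) - g(x) = \lim_\lambda \bigl( f(x+x_\lambda + \delta e_k) - f(x+x_\lambda) \bigr).
\]
Now I would invoke the slow oscillation of $f$: taking the compact set $M = \overline{B_1}$ (or $M=[0,\delta]e_k$), since $|x + x_\lambda|\to\infty$ as $|x_\lambda|\to\infty$, the definition of $\sox X$ gives
\[
\norm[X]{f(x+x_\lambda+\delta e_k) - f(x+x_\lambda)} \le \sup_{m\in M}\norm[X]{f(x+x_\lambda) - f(x+x_\lambda+m)} \longrightarrow 0.
\]
Hence $g(x+\delta e_k) = g(x)$ for every $x$, every coordinate direction $e_k$, and every $\delta>0$, so $g$ is constant along all lines parallel to the axes; since $\rd$ is connected this forces $g$ to be globally constant.

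Alternatively, if one prefers to stay in the smooth setting, when $f\in\soix X$ one can differentiate: by Lemma \ref{lem:charsoi}, $\partial_k f(y)\to 0$ as $|y|\to\infty$, and a diagonal/Arzelà--Ascoli argument (Lemma \ref{prop:Arzela} applied to each $\partial_k f\in C_b^\infty$) shows $T_{-x_\lambda}(\partial_k f)\to 0$ locally uniformly; combined with $T_{-x_\lambda}f\to g$ this identifies $\partial_k g \equiv 0$, again yielding $g$ constant. I would present the first argument since it requires only $f\in\sox X$ and no smoothness.

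The only point requiring a little care — and the closest thing to an obstacle — is the observation that $|x+x_\lambda|\to\infty$ whenever $|x_\lambda|\to\infty$ for fixed $x$, which is immediate from the triangle inequality $|x+x_\lambda|\ge |x_\lambda| - |x|$, together with the uniformity built into the definition of slow oscillation (the estimate is uniform over $m$ in the chosen compact set, so plugging in the moving base point $x+x_\lambda$ is legitimate). Everything else is a routine passage to the limit.
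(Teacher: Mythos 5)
Your proof is correct and follows essentially the same line as the paper's: both apply the defining estimate for slow oscillation at the moving base point (yours at $x+x_{\lambda}$, the paper's at $x_{\lambda}$) with a suitably chosen compact set $M$ to force the difference of translates to vanish in the limit. The paper does this in one step by taking $M=\{x,x'\}$ to get $\|g(x)-g(x')\|_{X}\le\varepsilon$ directly for arbitrary $x,x'\in\rd$, whereas your coordinate-by-coordinate reduction followed by a connectedness argument is a small detour that arrives at the same conclusion.
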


\begin{proof}
Let $x,x'\in\rd$. Definition \ref{def:slow} with $M=\left\{ x,x'\right\} $
shows that for all $\varepsilon>0$ there exists an index $\lambda_{\varepsilon}=\lambda_{\varepsilon}(x,x')$
such that $\left\Vert f\left(x+x_{\lambda}\right)-f(x_{\lambda})\right\Vert _{X}<\varepsilon/2$
and $\left\Vert f\left(x'+x_{\lambda}\right)-f(x_{\lambda})\right\Vert _{X}<\varepsilon/2$
for all $\lambda\succeq\lambda_{\varepsilon}$. So $\left\Vert f\left(x+x_{\lambda}\right)-f(x'+x_{\lambda})\right\Vert _{X}<\varepsilon$
for all $\lambda\succeq\lambda_{\varepsilon}$. If $g=\lim_{\lambda}T_{-x_{\lambda}}f$
exists, it follows that $\left\Vert g\left(x\right)-g\left(x'\right)\right\Vert _{X}\leq\varepsilon.$
As $\epsilon>0$ was arbitrary, $g$ must be constant. %This was to be shown.
\end{proof}

\section{Statement of the Density Theorem}

We state our main theorems. A first version describes a general setup
for symbols in the class $\cbinfd$ under additional assumptions on
the spectra of the limit operators. We then formulate a corollary
for slowly oscillating symbols, where the assumptions on the limit
operators are automatically satisfied. We discuss possible applications
of the general version in Section \ref{sec:Outlook}. 
\begin{thm}
\label{mainabstr} Assume that $H_{a}=-\sum_{j,k=1}^{d}\partial_{j}a_{jk}\partial_{k}$
is uniformly elliptic with symbol $a\in\cbinfd$. Let $\PW\left(H_{a}\right)$
be the Paley-Wiener space as defined in Section \ref{subsec:The-generalized-Paley}.
Assume that % $0$ and $\Omega$ are not eigenvalues
$\Omega$ is not an eigenvalue of any limit operator $H_{b}$ .

If $S$ is a set of stable sampling for $\PW\left(H_{a}\right)$,
then 
\begin{equation}
D_{0}^{-}\left(S\right)\geq1.\label{eq:5a-1-1-1}
\end{equation}
If $S$ is a set of interpolation for $\PW\left(H_{a}\right)$, then
\begin{equation}
D_{0}^{+}\left(S\right)\leq1\,.\label{eq:5b-1-1-1}
\end{equation}
\end{thm}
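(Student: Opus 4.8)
The plan is to apply the abstract density theorem, Theorem~\ref{abstrdens}, to the reproducing kernel Hilbert space $\cH=\PW(H_a)$, viewed inside $L^2(\rd,\mu)$ with $d\mu(x)=k(x,x)\,dx$; here $\mu$ is equivalent to Lebesgue measure by \eqref{eq:kernelbound}, and the conclusions of that theorem are exactly \eqref{eq:5a-1-1-1} and \eqref{eq:5b-1-1-1}. The diagonal bound required there is furnished by Proposition~\ref{prop:kernbound}, so the statement reduces to verifying the weak localization \eqref{eq:hapcont} and the homogeneous approximation property \eqref{eq:hapclass} for the kernel of $\PW(H_a)$. I would deduce both from one structural fact: \emph{the family of centered reproducing kernels $\{T_{-x}k_x:x\in\rd\}$ is relatively compact in every Sobolev space $\sobolev s$, $s\geq0$.} Granting this, relative compactness in $L^2$ forces equismall tails at infinity (the Riesz--Kolmogorov criterion), which is precisely \eqref{eq:rk} and hence (WL); and relative compactness in $\sobolev s$ for $s$ large, together with the two representations \eqref{eq:c12-1} of point evaluation and the relative separation of a Bessel set $S$, yields (HAP) by a standard local-estimate argument, the point being that the tail $\sum_{s\in S\setminus B_r(x)}\abs{k(x,s)}^2$ is controlled by the Sobolev mass of $k_x$ away from $x$, which is uniformly small for large $r$ by the same relative compactness.

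To prove the structural fact I argue sequentially. The family is bounded in each $\sobolev s$, since $\norm[\sobolev s]{k_x}\asymp\norm{k_x}\leq C$ uniformly in $x$ by Lemma~\ref{lem:Sobolev} and Proposition~\ref{prop:kernbound}. Given a sequence $(x_n)$: if it is bounded, the continuity of $x\mapsto k_x$ into $\sobolev s$ (Proposition~\ref{cor:normcont}) together with continuity of translation gives a convergent subsequence. If $\abs{x_n}\to\infty$, Lemma~\ref{prop:Arzela} produces a subsequence along which $T_{-x_n}a\to b$ in $C^\infty(\rd)$, where $b\in\cbinfd$ is again uniformly elliptic with the same ellipticity constant; by Proposition~\ref{strongcomp}, $H_{T_{-x_n}a}f\to H_bf$ in $L^2$ for every $f\in\sobolev2$, i.e.\ $\auto{x_n}{H_a}\to H_b$ in the strong resolvent sense. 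Since $H_b\geq0$ we have $\specproj{H_b}=\chi_{(-\infty,\Omega]}(H_b)$, whose only possibly-atomic endpoint is $\Omega$, which is not an eigenvalue of the limit operator $H_b$ by hypothesis; hence strong resolvent convergence upgrades to strong convergence of the spectral projections $\specproj{H_{T_{-x_n}a}}\to\specproj{H_b}$.

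It remains to pass from operators to kernels. Testing against $f\in\cinf$ and using $T_{-x}\specproj{H_a}T_x=\specproj{H_{T_{-x}a}}$, one computes $\inprod{T_{-x_n}k_{x_n},f}=\overline{(\specproj{H_{T_{-x_n}a}}f)(0)}$. Now $\specproj{H_{T_{-x_n}a}}f\to\specproj{H_b}f$ in $L^2$, and this sequence is bounded in every $\sobolev s$ by uniform elliptic regularity over the orbit (see below), so it converges weakly in $\sobolev s$; Rellich's theorem then gives $(\specproj{H_{T_{-x_n}a}}f)(0)\to(\specproj{H_b}f)(0)=\inprod{f,k_0^{(b)}}$, where $k_0^{(b)}$ is the reproducing kernel of $\PW(H_b)$ at the origin. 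Thus $T_{-x_n}k_{x_n}\rightharpoonup k_0^{(b)}$ weakly in $L^2$, and by the uniform bound also in each $\sobolev s$. Finally $\norm{T_{-x_n}k_{x_n}}^2=T_{-x_n}k_{x_n}(0)\to k_0^{(b)}(0)=\norm{k_0^{(b)}}^2$ by Rellich again, so weak convergence plus convergence of norms yields strong $L^2$ convergence. To promote this to $\sobolev{2k}$ one uses the uniform norm equivalence $\norm[\sobolev{2k}]{g}\asymp\norm{(H_{T_{-x_n}a}^k+c_k)g}$ from \eqref{eq:ellreg} together with the identity $H_{T_{-x_n}a}^k\specproj{H_{T_{-x_n}a}}=g_k(H_{T_{-x_n}a})$ for the bounded Borel function $g_k(\lambda)=\lambda^k\chi_{[0,\Omega]}(\lambda)$, which converges strongly to $g_k(H_b)$ because $g_k$ is continuous off $\{\Omega\}$ and $\Omega\notin\sigma_{pp}(H_b)$; applied to the $L^2$-convergent vectors $T_{-x_n}k_{x_n}$ and combined with Proposition~\ref{strongcomp} applied to $k_0^{(b)}$, this gives $\norm[\sobolev{2k}]{T_{-x_n}k_{x_n}-k_0^{(b)}}\to0$.

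The main obstacle is exactly this last passage: $k_x$ is morally $\specproj{H_a}\delta_x$ with $\delta_x\notin L^2$, so the convergence of the spectral projections cannot be applied to it directly; the remedy is to test against $\cinf$ and exploit that the limiting projection lands in a fixed Sobolev space on which point evaluation is continuous, so that Rellich compactness converts strong $L^2$ convergence into convergence of point values. A secondary point to check is the uniformity, over the orbit $\{T_{-x}a:x\in\rd\}$ and its $C^\infty$-closure, of the elliptic-regularity data in \eqref{eq:ellreg} -- the constants $c_k$ and the isomorphism norms -- which holds because they depend only on the ellipticity constant $\theta$ and on finitely many sup-norms of derivatives of the symbol, all uniformly bounded for $a\in\cbinfd$. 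Everything else, in particular the geometric and doubling hypotheses behind Theorem~\ref{abstrdens}, is automatic for $\rd$ with $\mu$ equivalent to Lebesgue measure.
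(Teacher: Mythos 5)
Your overall strategy coincides with the paper's: reduce to the abstract density theorem (Theorem~\ref{abstrdens}) by verifying \eqref{eq:kernelbound}, (WL), and (HAP), and obtain the latter two from relative compactness of $\{T_{-x}k_x : x\in\rd\}$ in every $\sobolev s$. The diagnosis that the whole matter hinges on the convergence $T_{-x_n}k_{x_n}\to k_0^{(b)}$ along unbounded sequences is exactly what drives the paper's proof (Theorem~\ref{rkconv-1} and Theorem~\ref{thm: L2relcpt}).

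However, your proof of this key convergence is a genuinely different one. The paper proceeds by approximation: it replaces $k_x$ by the mollified object $\specproj{H_a}\varphi_x^h$, shows via the Bernstein inequality and the Sobolev embedding that this approximation is uniform in $x$ (Lemma~\ref{lem:-uniformly-in}), and then runs a three-term $\varepsilon/3$ decomposition, controlling the middle term $\|\specproj{H_{a_\lambda}}\varphi_0^h - \specproj{H_b}\varphi_0^h\|_{\sobolev s}$ by the strong convergence of spectral projections together with the elliptic isomorphisms $H^k+c_k$. You avoid the mollifier entirely: you test $\langle T_{-x_n}k_{x_n}, f\rangle = \overline{(\specproj{H_{T_{-x_n}a}}f)(0)}$ against $f\in C_c^\infty$, push the strong $L^2$ convergence of the projections through the Rellich compactness of $\sobolev s(B_1)\hookrightarrow C(\overline{B_1})$ to get convergence of point values, and deduce weak $L^2$ convergence; then you use the reproducing-kernel identity $\|T_{-x_n}k_{x_n}\|_2^2 = (T_{-x_n}k_{x_n})(0)$ and Rellich once more to upgrade weak to strong convergence, and finally the uniform elliptic isomorphisms to lift to $\sobolev{2k}$. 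Both proofs ultimately rest on the same two ingredients (strong spectral-projection convergence, which needs $\Omega\notin\sigma_p(H_b)$ and $0\notin\sigma_p(H_b)$, and uniformity of elliptic-regularity constants along the orbit $\{T_{-x}a\}$), but your route replaces the uniform-in-$x$ mollifier estimate by the weak-plus-norms-implies-strong device, which is shorter and avoids the explicit approximate unit. Your passing observation that $\chi_{[0,\Omega]}(H_b)=\chi_{(-\infty,\Omega]}(H_b)$ on positive operators also removes the need to invoke \eqref{eq:notev} separately. One caveat: the paper states Theorem~\ref{rkconv-1} for nets because it is reused in Section~6 on the nonmetrizable Higson compactification; your sequential argument suffices for Theorem~\ref{mainabstr} itself, since $\sobolev s$ is metric.

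The one thin spot is the (HAP) step, which you describe as ``a standard local-estimate argument''. It is not entirely routine: one has to show first that for any relatively separated $S$, the Sobolev reproducing kernels $\{T_s\kappa : s\in S\}$ form a Bessel sequence in $\sobolev s$ (the paper's Lemma~\ref{lem:sobolevBessel}, which requires computing the exponential off-diagonal decay of the Gramian $\langle T_x\kappa, T_y\kappa\rangle_{\sobolev s}$ via Bessel functions and applying Schur's test), and only then can the tail $\sum_{s\in S\setminus B_R(x)}|k(x,s)|^2 = \sum |\langle (1-\psi)T_{-x}k_x, T_{s-x}\kappa\rangle_{\sobolev s}|^2$ be bounded by the Bessel constant times $\|(1-\psi)T_{-x}k_x\|_{\sobolev s}^2$, which Kolmogorov--Riesz makes uniformly small. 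Your sketch has the right shape but would need that Bessel-sequence lemma spelled out for the argument to close.
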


The following corollary is Theorem \nameref{tm2} of the introduction,
where we have used the equivalence of Lemma \ref{lem:charsoi} to
avoid the formal definition of $\soi$. 
\begin{cor}
\label{main} Assume that $H_{a}=-\sum_{j,k=1}^{d}\partial_{j}a_{jk}\partial_{k}$
is uniformly elliptic with symbol $a\in\soid$.

If $S$ is a set of stable sampling for $\PW\left(H_{a}\right)$,
then $D_{0}^{-}\left(S\right)\geq1.$

If $S$ is a set of interpolation for $\PW\left(H_{a}\right)$, then
$D_{0}^{+}\left(S\right)\leq1\,.$ 
\end{cor}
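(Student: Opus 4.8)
The plan is to obtain Corollary~\ref{main} as a direct consequence of Theorem~\ref{mainabstr}: when the matrix symbol $a$ is slowly oscillating, the only limit operators of $H_{a}$ are constant-coefficient uniformly elliptic operators, and for such operators no positive number (in particular $\Omega$) is an eigenvalue, so the spectral hypothesis of Theorem~\ref{mainabstr} is automatic. First I would note that $\soid\subseteq\cbinfd$ holds by definition (and is restated in Lemma~\ref{lem:charsoi}), so Theorem~\ref{mainabstr} applies to $H_{a}$ and it remains only to verify that $\Omega$ is not an eigenvalue of any limit operator $H_{b}$.

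Next I would analyze an arbitrary limit operator. Let $H$ be a limit operator of $H_{a}$, given by a net $(x_{\lambda})_{\lambda\in\Lambda}$ with $|x_{\lambda}|\to\infty$ and $\auto{x_{\lambda}}{H_{a}}f\to Hf$ in $\Lii$ for every $f\in\cinf$. By the Heine--Borel property of $C^{\infty}(\rd)$ (Lemma~\ref{prop:Arzela}), the orbit $\{T_{-x}a:x\in\rd\}$ is relatively compact in $C^{\infty}(\rd)$, so after passing to a subnet we may assume $T_{-x_{\lambda}}a\to b$ in $C^{\infty}(\rd)$ for some $b\in\cbinfd$. Proposition~\ref{strongcomp} then gives $\auto{x_{\lambda}}{H_{a}}f=H_{T_{-x_{\lambda}}a}f\to H_{b}f$ in $\Lii$ for every $f\in\cinf$, and uniqueness of the limit (Banach--Steinhaus) forces $H=H_{b}$ on the core $\cinf$, hence $H=H_{b}$. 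Since each entry $a_{jk}$ lies in $\soi$, applying Lemma~\ref{prop:constlim} componentwise shows that $b$ is a \emph{constant} matrix; passing to the limit in $a(x)\xi\cdot\xi\geq\theta|\xi|^{2}$ shows $b$ is positive definite. Thus $H_{b}=-\sum_{j,k}b_{jk}\partial_{j}\partial_{k}$ is a constant-coefficient uniformly elliptic operator.

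Then I would settle the spectral claim. Conjugating by the Fourier transform, $H_{b}$ is unitarily equivalent to multiplication by the polynomial $p(\xi)=b\xi\cdot\xi$ on $\Lii$. An eigenfunction for the eigenvalue $\Omega$ would have $\widehat{f}$ supported in the level set $\{\xi\in\rd:b\xi\cdot\xi=\Omega\}$, which is a compact ellipsoidal hypersurface (a two-point set when $d=1$) of Lebesgue measure zero; hence $\widehat{f}=0$ and $\Omega$ is not an eigenvalue of $H_{b}$. (The endpoint $0$ is excluded automatically by~\eqref{eq:notev}, since $H_{b}$ is uniformly elliptic.) Therefore the hypothesis of Theorem~\ref{mainabstr} is satisfied, and the bounds $D_{0}^{-}(S)\geq1$ for sampling sets and $D_{0}^{+}(S)\leq1$ for interpolating sets follow immediately.

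I do not anticipate a serious obstacle: all the analytic work is already packaged in Theorem~\ref{mainabstr}, Proposition~\ref{strongcomp}, and Lemma~\ref{prop:constlim}. The only point requiring care is the passage from a \emph{net} defining an abstract limit operator to a convergent subnet of translated symbols — this must be done with the Heine--Borel property of the Fr\'echet space $C^{\infty}(\rd)$ and the uniqueness of limit operators, rather than by a sequential argument, since the later sections operate with the non-metrizable Higson compactification.
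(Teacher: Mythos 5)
Your proposal is correct and follows essentially the same route as the paper: identify every limit operator of $H_{a}$ as a constant-coefficient uniformly elliptic operator $H_{b}$ via Lemma~\ref{prop:Arzela}, Proposition~\ref{strongcomp} and Lemma~\ref{prop:constlim}, observe that such an operator has no point spectrum (the paper says it is ``similar to the Laplacian,'' you spell out the measure-zero level-set argument on the Fourier side), and then invoke Theorem~\ref{mainabstr}. Your version merely supplies more detail on the subnet extraction and the identification $H=H_{b}$, all of which is consistent with the paper's argument.
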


\begin{proof}[Proof of Corollary \ref{main}]
If $a\in\soid$, then by Lemma \ref{prop:Arzela} every net $\left(x_{\lambda}\right)_{\lambda\in\Lambda}\subset\rd$
that diverges to infinity has a subnet $\left(x_{\mu}\right)_{\mu\in M}$,
such that $\lim_{\mu}T_{-x_{\mu}}a=b$ in the topology of $C^{\infty}\left(\rd\right)$
for a symbol $b$. This symbol $b$ is constant by Lemma
\ref{prop:constlim} and positive definite; so $H_{b}$ is similar
to the Laplacian and has no point spectrum. 
\end{proof}

\section{Proof of Weak Localization of the kernel~\label{sec:Proof-of-WL}}

To prove Theorem~\ref{mainabstr} we invoke Theorem~\ref{abstrdens}
and verify its main hypotheses (WL) and (HAP) on the reproducing kernel.

Let $Q_{h}(x)=\left[x-\frac{h}{2};x+\frac{h}{2}\right]$ be the cube
of side-length $h$ at $x\in\rd$, and let $\varphi_{x}^{h}=\left|h\right|^{-d}\chi_{Q_{h}(x)}=T_{x}\varphi_{0}^{h}$
be the usual approximate unit. 
\begin{lem}
\label{lem:-uniformly-in}$\lim_{h\to0}\norm{\specproj{H_{a}}\varphi_{x}^{h}-k_{x}}=0$
uniformly in $x\in\rd$. 
\end{lem}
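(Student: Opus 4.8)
The plan is to use the two representations of point evaluation in \eqref{eq:c12-1} together with the reproducing kernel of the Sobolev space $\sobolev s$, $s>d/2$. For $f\in\PW(H_a)$ we have $f(x)=\langle f,k_x\rangle_{L^2}$ and also $f(x)=\langle f,T_x\kappa\rangle_{\sobolev s}$. First I would test the difference $\specproj{H_{a}}\varphi_{x}^{h}-k_{x}$ against an arbitrary $f\in\PW(H_a)$. Since $\specproj{H_{a}}$ is an orthogonal projection and $k_x\in\PW(H_a)$, we get $\langle f,\specproj{H_{a}}\varphi_{x}^{h}-k_{x}\rangle = \langle f,\varphi_{x}^{h}\rangle - f(x) = \int_{\rd}\varphi_{x}^{h}(y)\bigl(f(y)-f(x)\bigr)\,dy$. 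So the claim reduces to estimating $\bigl\|\specproj{H_{a}}\varphi_{x}^{h}-k_{x}\bigr\| = \sup\{|\langle f,\varphi_{x}^{h}\rangle - f(x)| : f\in\PW(H_a),\ \|f\|_2\le 1\}$ and showing it tends to $0$ as $h\to0$ uniformly in $x$.

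The key point is a uniform modulus-of-continuity estimate for the functions in the unit ball of $\PW(H_a)$. By Lemma~\ref{lem:Sobolev}, $\PW(H_a)$ embeds continuously into $\sobolev s$ for $s>d/2$ with a constant independent of $f$, and in fact into $\sobolev{s+1}$; hence the unit ball of $\PW(H_a)$ is a bounded set in $\sobolev{s+1}$, which by the Sobolev embedding (with $s+1 > d/2+1$) consists of functions whose first derivatives are uniformly bounded, so these functions are uniformly Lipschitz: there is $L>0$ with $|f(y)-f(x)|\le L|y-x|$ for all $f$ in the unit ball and all $x,y$. Therefore
\[
\Bigl|\int_{\rd}\varphi_{x}^{h}(y)\bigl(f(y)-f(x)\bigr)\,dy\Bigr| \le \int_{Q_h(x)} |h|^{-d}\,L|y-x|\,dy \le L\,\frac{\sqrt{d}\,h}{2}\,,
\]
which is independent of $x$ and of $f$ in the unit ball, and goes to $0$ as $h\to0$. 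This gives $\sup_{x\in\rd}\bigl\|\specproj{H_{a}}\varphi_{x}^{h}-k_{x}\bigr\| \le L\sqrt d\,h/2 \to 0$.

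Alternatively, one can avoid the Lipschitz bound and argue via the Sobolev reproducing kernel directly: write $\langle f,\varphi_{x}^{h}\rangle_{L^2} = \langle f, P\varphi_{x}^{h}\rangle_{L^2} = \langle f, \Psi \rangle_{\sobolev s}$ where $P=\specproj{H_a}$ and $\Psi\in\PW(H_a)\subset\sobolev s$ is determined by $\langle g,\Psi\rangle_{\sobolev s}=\langle g,\varphi_x^h\rangle_{L^2}$ for $g\in\PW(H_a)$; since also $f(x)=\langle f,T_x\kappa\rangle_{\sobolev s}$ one reduces to controlling $\|\specproj{H_a}\varphi_x^h - k_x\|$ via $\|P\varphi_x^h-k_x\|^2 = \langle \varphi_x^h - \delta_x,\ P\varphi_x^h-k_x\rangle$ expanded through the $\sobolev s$ kernel $T_\cdot\kappa$, where the translation-invariance of $\kappa$ makes the $x$-dependence disappear and one is left with the scalar quantity $\langle \varphi_0^h, P\varphi_0^h\rangle$-type expressions converging as $h\to0$ by continuity of $x\mapsto k_x$ in $\sobolev s$ (Proposition~\ref{cor:normcont}) together with $\varphi_x^h\to\delta_x$ weakly. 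I expect the main obstacle to be bookkeeping the uniformity in $x$: one must make sure the embedding constant in Lemma~\ref{lem:Sobolev}, the Sobolev embedding constant, and hence the Lipschitz constant $L$, are all genuinely independent of $x$ — this is exactly where the hypothesis $a\in\cbinfd$ (bounded derivatives of all orders, uniform ellipticity) is used, since it makes the elliptic regularity estimates in \eqref{eq:ellreg} translation-uniform. The first, Lipschitz-based, argument is the cleanest and is the one I would write up.
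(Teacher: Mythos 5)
Your first argument is correct and is essentially the paper's own proof: both test $\specproj{H_{a}}\varphi_{x}^{h}-k_{x}$ against $f$ in the unit ball of $\PW(H_a)$, reduce to $|\langle f,\varphi_x^h\rangle-f(x)|$, and close with the uniform bound $\|\nabla f\|_\infty\le C\|f\|_2$ obtained from Lemma~\ref{lem:Sobolev} and the Sobolev embedding with $s>d/2+1$. The alternative sketch at the end is unnecessary; the Lipschitz-based argument you chose to write up is exactly what the paper does.
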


\begin{proof}
Let $f\in\PW(H_{a})$, then 
\begin{align*}
|\langle f,\specproj{H_{a}}\varphi_{x}^{h}-k_{x}\rangle| & =|\langle f,\varphi_{x}^{h}\rangle-f(x)|\\
 & =h^{-d}|\int_{Q_{h}(x)}(f(y)-f(x))\,dy|\leq h^{-d}\int_{Q_{h}(x)}|f(y)-f(x)|\,dy\\
 & \leq\sup_{z\in\rd}|\nabla f(z)|\,h^{-d}\int_{Q_{h}(x)}|y-x|\,dy\\
 & \leq C\|\nabla f\|_{\infty}\,\,h\,.
\end{align*}
Since $f\in\sobolev s(\rd)$ for all $s\geq0$, we apply first the
Sobolev embedding (with $s>d/2+1$) and then Lemma~\ref{lem:Sobolev}
and obtain 
\[
\|\nabla f\|_{\infty}\leq C_{1}\|f\|_{W_{2}^{s}}\leq C\|f\|_{2}\,,
\]
since $f\in\PW(H_{a})$. Consequently, 
\[
|\langle f,\specproj{H_{a}}\varphi_{x}^{h}-k_{x}\rangle|\leq Ch\|f\|_{2}\,.
\]
Taking the supremum over $f\in\PW(H_{a})$, we obtain 
\[
\|\specproj{H_{a}}\varphi_{x}^{h}-k_{x}\|_{2}=\sup_{f\in\PW(H_{a}),\|f\|_{2}=1}\langle f,\specproj{H_{a}}\varphi_{x}^{h}-k_{x}\rangle\leq Ch\,.
\]
As this estimate is independent of $x$, we have shown that $\specproj{H_{a}}\varphi_{x}^{h}\to k_{x}$
in $\lrd$ uniformly in $x$. 
\end{proof}
The following result relates the reproducing kernel of a limit operator
of $H_{a}$ to the original kernel. It expresses a form of continuous
dependence of the reproducing kernel of the matrix symbol of $H_{a}$.
We will denote the point spectrum of an operator $H$ by $\sigma_{p}(H)$. 
\begin{thm}
\label{rkconv-1} Let $H_{a}$ with symbol $a\in\cbinfd$, and let
$\left(x_{\lambda}\right)_{\lambda\in\Lambda}\subset\rd$ be an unbounded
net such that $\lim_{\lambda}T_{-x_{\lambda}}a=b$ pointwise. Assume
that % $0,\Omega\notin\sigma_{p}\left(H_{b}\right)$
\ $\Omega\notin\sigma_{p}\left(H_{b}\right)$. Let $\tilde{k}$ be
the reproducing kernel of $\PW(H_{b})$. Then 
\[
\lim_{\lambda}T_{-x_{\lambda}}k_{x_{\lambda}}=\tilde{k}_{0}\,
\]
with convergence in $\sobolev s$ for every $s\geq0$. 
\end{thm}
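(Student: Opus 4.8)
The plan is to reduce the statement to the continuity of the Borel functional calculus along a strongly resolvent convergent net of operators, using the elementary observation that translates of the reproducing kernel are reproducing kernels of the translated operators. Write $a^{(\lambda)}=T_{-x_{\lambda}}a$, so that $\auto{x_{\lambda}}{H_{a}}=H_{a^{(\lambda)}}$ and $\specproj{H_{a^{(\lambda)}}}=T_{-x_{\lambda}}\specproj{H_{a}}T_{x_{\lambda}}$ by the intertwining of the functional calculus with the unitary translation. Consequently $T_{-x_{\lambda}}k_{x_{\lambda}}$ is exactly the reproducing kernel of $\PW(H_{a^{(\lambda)}})$ at the origin, and likewise $\tilde{k}_{0}$ is the reproducing kernel of $\PW(H_{b})$ at the origin. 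By Lemma~\ref{prop:Arzela}(ii) the pointwise limit $b$ lies in $\cbinfd$, and both the ellipticity constant $\theta$ and the hermitian symmetry are inherited in the limit, so $H_{b}$ belongs to our class and Lemmas~\ref{lem:Sobolev} and \ref{lem:-uniformly-in} and Proposition~\ref{prop:kernbound} all apply to it. Using the approximate unit $\varphi_{x}^{h}$ of Lemma~\ref{lem:-uniformly-in} and the identity $\specproj{H_{a^{(\lambda)}}}\varphi_{0}^{h}=T_{-x_{\lambda}}\bigl(\specproj{H_{a}}\varphi_{x_{\lambda}}^{h}\bigr)$, together with the fact that $T_{-x_{\lambda}}$ is an isometry of $\sobolev s$ and the norm equivalence on $\PW(H_{a})$ from Lemma~\ref{lem:Sobolev} (the difference $\specproj{H_{a}}\varphi_{x}^{h}-k_{x}$ lies in $\PW(H_{a})$), one gets, for every fixed $s\ge0$,
\[
\lim_{h\to0}\,\sup_{\lambda}\,\norm[\sobolev s]{\specproj{H_{a^{(\lambda)}}}\varphi_{0}^{h}-T_{-x_{\lambda}}k_{x_{\lambda}}}=0 ,
\]
and, applying Lemma~\ref{lem:-uniformly-in} to $H_{b}$ at $x=0$, $\lim_{h\to0}\norm[\sobolev s]{\specproj{H_{b}}\varphi_{0}^{h}-\tilde{k}_{0}}=0$. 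It therefore remains, for each fixed small $h$, to control $\specproj{H_{a^{(\lambda)}}}\varphi_{0}^{h}-\specproj{H_{b}}\varphi_{0}^{h}$ as $\lambda$ runs through the net.

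This spectral step is the heart of the matter and is where the hypothesis $\Omega\notin\sigma_{p}(H_{b})$ enters. First I would upgrade the hypothesis: by Lemma~\ref{prop:Arzela} the pointwise convergence $a^{(\lambda)}\to b$ is in fact convergence in $C^{\infty}(\rd)$, so Proposition~\ref{strongcomp} with $k=1$, $m=0$ gives $H_{a^{(\lambda)}}f\to H_{b}f$ in $\Lii$ for every $f\in\cinf$. Since $\cinf$ is a common core for the self-adjoint operators $H_{a^{(\lambda)}}$ and for $H_{b}$, the standard criterion for strong resolvent convergence from convergence on a common core (see, e.g., \cite{Teschl09}; the argument carries over verbatim to nets) yields $H_{a^{(\lambda)}}\to H_{b}$ in the strong resolvent sense. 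Now $0\notin\sigma_{p}(H_{b})$ by~\eqref{eq:notev} and $\Omega\notin\sigma_{p}(H_{b})$ by assumption, so the spectral projections of $[0,\Omega]$ converge strongly, $\specproj{H_{a^{(\lambda)}}}\to\specproj{H_{b}}$; in particular $\specproj{H_{a^{(\lambda)}}}\varphi_{0}^{h}\to\specproj{H_{b}}\varphi_{0}^{h}$ in $\Lii$. To promote this $L^{2}$-convergence to $\sobolev s$, note that $g_{\lambda}:=\specproj{H_{a^{(\lambda)}}}\varphi_{0}^{h}$ lies in $\PW(H_{a^{(\lambda)}})$ with $\norm{g_{\lambda}}\le\norm{\varphi_{0}^{h}}$; combining the elliptic regularity isomorphism~\eqref{eq:ellreg} with Bernstein's inequality~\eqref{eq:c11} bounds $\norm[{\sobolev{2k}}]{g_{\lambda}}$ by a constant (independent of $\lambda$) times $\norm{\varphi_{0}^{h}}$ for each $k$ — the relevant constants are the same for all $\lambda$ because $H_{a^{(\lambda)}}=T_{-x_{\lambda}}H_{a}T_{x_{\lambda}}$ and the translations act isometrically on $\sobolev{2k}$ and on $\Lii$. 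Hence $(g_{\lambda})$ is bounded in every Sobolev space and converges in $\Lii$, so by interpolation between $\sobolev 0$ and $\sobolev{2k}$ with $2k>s$ it converges in $\sobolev s$; the limit, being the $L^{2}$-limit, must be $\specproj{H_{b}}\varphi_{0}^{h}$.

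Combining the three estimates by an $\varepsilon/3$ argument gives $\lim_{\lambda}\norm[\sobolev s]{T_{-x_{\lambda}}k_{x_{\lambda}}-\tilde{k}_{0}}=0$ for every $s\ge0$, which is the claim. The main obstacle is precisely the spectral step: Proposition~\ref{strongcomp} converts the soft $C^{\infty}$-convergence of the symbols into genuine strong resolvent convergence of the elliptic operators, but this by itself only controls \emph{continuous} functions of the operators; passing it through the discontinuous indicator $\specproj{\cdot}$ requires the assumption that $\Omega$ is not an eigenvalue of the limit operator $H_{b}$ (together with the automatic $0\notin\sigma_{p}(H_{b})$ from~\eqref{eq:notev}), without which the spectral projections need not converge at all. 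Everything else — the two nested limits $h\to0$ and $\lambda\to\infty$, and the passage from $L^{2}$ to all Sobolev norms — is routine bookkeeping resting on Lemmas~\ref{lem:Sobolev} and \ref{lem:-uniformly-in} and the translation invariance of the elliptic regularity constants.
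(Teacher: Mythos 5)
Your proof is correct and its overall architecture is the same as the paper's: the same three-term decomposition of $T_{-x_\lambda}k_{x_\lambda}-\tilde{k}_0$ via the mollifier $\varphi_0^h$, the same uniform control of the two outer terms via Lemma~\ref{lem:-uniformly-in} and the Sobolev norm equivalence of Lemma~\ref{lem:Sobolev}, and the same passage from $C^\infty$-convergence of symbols to strong resolvent convergence of the operators (Proposition~\ref{strongcomp} plus the common core), followed by the spectral fact \eqref{eq:c17-1} that spectral projections converge strongly when the endpoints $0$ and $\Omega$ are not eigenvalues of the limit operator.

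Where you genuinely differ is in the upgrade of the middle term from $L^2$- to $W_2^s$-convergence. The paper bounds $\|\cdot\|_{W_2^{2k}}$ by $\|(H_{a_\lambda}^k+c_k)\cdot\|_2$ and then returns to spectral theory: it splits the resulting $L^2$-difference into $A_\lambda+B_\lambda$, handles $B_\lambda$ via Proposition~\ref{strongcomp}, and handles $A_\lambda$ by introducing an auxiliary $F\in C_c(\bR)$ with $F(t)=t^k+c_k$ on $[0,\Omega]$ and using \eqref{eq:spprconv} together with the joint strong continuity of products of uniformly bounded operators. You instead observe that $g_\lambda=\specproj{H_{a^{(\lambda)}}}\varphi_0^h$ is \emph{uniformly} bounded in every $W_2^{2k}$ (because the ellipticity and Bernstein constants are translation-invariant) and converges in $L^2$, and then invoke the Sobolev interpolation inequality $\|f\|_{W_2^s}\le\|f\|_{L^2}^{1-s/(2k)}\|f\|_{W_2^{2k}}^{s/(2k)}$ with $2k>s$ to pass to $W_2^s$. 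This is cleaner in that it avoids the second excursion into functional calculus and the auxiliary function $F$; the only price is that you need the uniform-in-$\lambda$ Sobolev bound explicitly, but you correctly supply that from translation invariance, so there is no gap. The two approaches have the same logical content, but yours localizes the role of spectral theory to a single application of \eqref{eq:c17-1}, which is a modest simplification worth noting.
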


Before the proof we remind the reader of the following standard facts
of spectral theory (see, e.g.,~\cite[Ch.~6.6]{Teschl09}. Although in
the literature 
these results are formulated  for sequences of operators,  the statements and proofs
are  equally valid for nets\footnote{The cited results use the  the
  strong operator topology. As this topology  is metrizable on bounded
  sets, the convergence of nets is equivalent to the convergence of sequences.}.  Let $H_{\lambda},\lambda\in\Lambda$, and $H_{b}$ be self-adjoint
operators with a common core $\mathcal{D}$. If $H_{\lambda}f\to H_{b}f$
for all $f\in\mathcal{D}$, then, for every $F\in C_{b}(\r),$ 
\begin{equation}
F\left(H_{\lambda}\right)f\to F\left(H_{b}\right)f\qquad\text{for all }f\in\ll(\rd)\,.\label{eq:spprconv}
\end{equation}
Furthermore, if $\chi_{\left\{ \alpha\right\} }\left(H_{b}\right)=\chi_{\left\{ \beta\right\} }\left(H_{b}\right)=0,$
then 
\begin{equation}
\chi_{[\alpha,\beta]}\left(H_{\lambda}\right)f\to\chi_{[\alpha,\beta]}\left(H_{b}\right)f\qquad\text{for all }f\in\ll(\rd)\,.\label{eq:c17-1}
\end{equation}

\begin{proof}[Proof of Theorem~\ref{rkconv-1}]
\begin{comment}
We show that for every sequence $\left(x_{k}\right)_{k\in\mathbb{N}}\subset\rd$
the sequence $\left(T_{-x_{k}}k_{x_{k}}\right)_{k\in\mathbb{N}}$
has a norm-convergent subsequence. As $\rd\to\ll(\rd);\,x\mapsto k_{x}$
is norm continuous, we may restrict ourselves to the case where $\left(x_{k}\right)_{k\in\mathbb{N}}$
is unbounded and $\lim_{k\in\n}\abs{x_{k}}=\infty$. 
\end{comment}
We split the difference $T_{-x_{\lambda}}k_{x_{\lambda}}-\tilde{k}_{0}$
into three terms and then estimate their $W_{2}^{s}$-norm separately.
\begin{align}
\norm[\sobolev s]{T_{-x_{\lambda}}k_{x_{\lambda}}-\tilde{k}_{0}} & \leq\norm[\sobolev s]{T_{-x_{\lambda}}k_{x_{\lambda}}-T_{-x_{\lambda}}\specproj{H_{a}}\varphi_{x_{\lambda}}^{h}}+\norm[\sobolev s]{T_{-x_{\lambda}}\specproj{H_{a}}\varphi_{x_{\lambda}}^{h}-\specproj{H_{b}}\varphi_{0}^{h}}\nonumber \\
 & +\norm[\sobolev s]{\specproj{H_{b}}\varphi_{0}^{h}-\tilde{k}_{0}}=(I)+(II)+(III)\,.\label{eq:c20}
\end{align}
Choose $\varepsilon>0$.

\textbf{Step 1: }Expression \emph{(I)} can be estimated by 
\[
\norm[\sobolev s]{T_{-x_{\lambda}}k_{x_{\lambda}}-T_{-x_{\lambda}}\specproj{H_{a}}\varphi_{x_{\lambda}}^{h}}=\norm[\sobolev s]{k_{x_{\lambda}}-\specproj{H_{a}}\varphi_{x_{\lambda}}^{h}}\leq C_{s}\norm[2]{k_{x_{\lambda}}-\specproj{H_{a}}\varphi_{x_{\lambda}}^{h}}\,.
\]
The first equality holds by the translation invariance of the Sobolev
norm, the second inequality is a consequence of Proposition \ref{lem:Sobolev}.
By Lemma \ref{lem:-uniformly-in} there exists $h_{\varepsilon}>0$
such that, for every $0<h<h_{\varepsilon}$, 
\begin{equation}
\norm[2]{k_{x}-\specproj{H_{a}}\varphi_{x}^{h}}<\frac{\varepsilon}{3C_{s}}\label{eq:uc-1}
\end{equation}
for \emph{all} $x\in\rd$. So for $h<h_{\varepsilon}$, we obtain
$(I)<\varepsilon/3$. Similarly, we achieve \emph{(III)} $<\varepsilon/3$
for every $h<h_{\epsilon}'$.

\textbf{Step 2:} To bound the decisive term \emph{(II)}, we bring
in limit operators and elliptic regularity theory. Set $a_{\lambda}=T_{-x_{\lambda}}a$.
First note that 
\[
T_{-x_{\lambda}}\specproj{H_{a}}\varphi_{x_{\lambda}}^{h}=T_{-x_{\lambda}}\specproj{H_{a}}T_{x_{\lambda}}\varphi_{0}^{h}=\specproj{\tau_{x_{\lambda}}H_{a}}\varphi_{0}^{h}=\specproj{H_{a_{\lambda}}}\varphi_{0}^{h}\,.
\]
We have to verify that 
\begin{equation}
\lim_{\lambda}\norm[\sobolev s]{\specproj{H_{a_{\lambda}}}\varphi_{0}^{h}-\specproj{H_{b}}\varphi_{0}^{h}}=0.\label{eq:sobconv}
\end{equation}

For $L^{2}$-convergence ($s=0$) we argue as follows. By Lemma \ref{prop:Arzela}
the translates $T_{-x_{\lambda}}a$ converge to the matrix $b$ uniformly
on compact sets. Proposition \ref{strongcomp} implies that $H_{T_{-x_{\lambda}}a}f\to H_{b}f$
for $f\in\sobolev s$, $s\geq0$. To apply \eqref{eq:c17-1}, we note
that $\cinf$ is a common core for all $H_{a_{\lambda}}$ and for
$H_{b}$ and that $0\notin\sigma_{p}\left(H_{b}\right)$ by \eqref{eq:notev}
and $\Omega\notin\sigma_{p}\left(H_{b}\right)$ by assumption. Therefore
\eqref{eq:sobconv} follows from \eqref{eq:c17-1}.

For the convergence of \eqref{eq:sobconv} in general Sobolev spaces
$\sobolev s$ it suffices to treat the case $s=2k$ for every integer
$k$. Recall that by the results on elliptic regularity in Section~\ref{subsec:The-generalized-Paley}
the operator $\left(H_{a}^{k}+c_{k}\right)$ defines an isomorphism
from $W_{2}^{2k}(\rd)$ to $\Lii$, and since $\tau_{x_{\lambda}}\left(H_{a}^{k}+c_{k}\right)=H_{a_{\lambda}}^{k}+c_{k}$
we obtain 
\[
\norm[\sobolev{2k}\to L^{2}]{H_{a_{\lambda}}^{k}+c_{k}}=\norm[\sobolev{2k}\to L^{2}]{H_{a}^{k}+c_{k}}<\infty\,.
\]
The Sobolev norm can be estimated by the $L^{2}$-norm 
\[
\norm[\sobolev{2k}]{f}=\norm[\sobolev{2k}]{T_{x}f}\leq C_{s}\norm{(H_{a}^{k}+c_{k})T_{x}f}=C_{s}\norm{T_{-x}(H_{a}^{k}+c_{k})T_{x}f}
\]
independently of $x\in\rd$ . Thus $(II)$ can be estimated by the
$L^{2}$-norm, namely 
\begin{align}
\norm[\sobolev s]{\specproj{H_{a_{\lambda}}}\varphi_{0}^{h}-\specproj{H_{b}}\varphi_{0}^{h}} & \leq C_{s}\norm[2]{\left(H_{a_{\lambda}}^{k}+c_{k}\right)\specproj{H_{a_{\lambda}}}\varphi_{0}^{h}-\left(H_{a_{\lambda}}^{k}+c_{k}\right)\specproj{H_{b}}\varphi_{0}^{h}}\label{eq:sh3}\\
 & \hspace{-15mm}\leq C_{s}\|\left(H_{a_{\lambda}}^{k}+c_{k}\right)\specproj{H_{a_{\lambda}}}\varphi_{0}^{h}-(H_{b}^{k}+c_{k})\specproj{H_{b}}\varphi_{0}^{h}\|_{2}\nonumber \\
 & \hspace{-15mm}+C_{s}\|(H_{b}^{k}+c_{k})\specproj{H_{b}}\varphi_{0}^{h}-\left(H_{a_{\lambda}}^{k}+c_{k}\right)\specproj{H_{b}}\varphi_{0}^{h}\|_{2}\\
 & \hspace{-15mm}=A_{\lambda}+B_{\lambda}\,.%\left(\alpha_{m}\right)+\left(\beta_{m}\right)\,.
\end{align}
By Proposition \ref{strongcomp} we have $\left(H_{a_{\lambda}}^{k}+c_{k}\right)f\to(H_{b}^{k}+c_{k})f$
in $L^{2}$-norm for all $f\in\sobolev{2k}$. In particular, this
holds for $f=\specproj{H_{b}}\varphi_{0}^{h}$, thus $\lim_{\lambda}B_{\lambda}=0.$

For the first term we use spectral theory again. Define $F\in C_{c}\left(\r\right)$
such that its restriction to $[0,\Omega]$ satisfies 
\[
F(t)=t^{k}+c_{k}\qquad\text{ for }t\in[0,\Omega]\,.
\]
Then $F\left(t\right)\specproj t=(t^{k}+c_{k})\chi_{\Omega}(t)$,
and $\lim_{\lambda}F\left(\auto{x_{\lambda}}{H_{a}}\right)f=F\left(H_{b}\right)f$
for all $f\in\Lii$ by \eqref{eq:spprconv}. Since the product of
bounded operators is continuous in the strong operator topology, it
follows that 
\begin{align*}
\lim_{\lambda}(H_{a_{\lambda}}^{k}+c_{k})\specproj{H_{a_{\lambda}}}\varphi_{0}^{h} & =\lim_{\lambda}F(H_{a_{\lambda}})\left(\lim_{\lambda}\specproj{H_{a_{\lambda}}}\varphi_{0}^{h}\right)\\
 & =F(H_{b})\specproj{H_{b}}\varphi_{0}^{h}=(H_{b}^{k}+c_{k})\specproj{H_{b}}\varphi_{0}^{h}\,,
\end{align*}
and so $\lim_{\lambda}A_{\lambda}=0.$

We can finish the proof as follows. We have already chosen $h<\min\left\{ h_{\varepsilon},h'_{\varepsilon}\right\} $
so that the terms \emph{(I) }and \emph{(III)} are $<\epsilon/3$ for
all $\lambda\in\Lambda$. For this fixed $h>0$ we can find an index
$\lambda_{0}$ such that 
\begin{equation}
(II)\leq C\norm{\left(H_{a_{\lambda}}^{k}+c_{k}\right)\specproj{H_{a_{\lambda}}}\varphi_{0}^{h}-\left(H_{a_{\lambda}}^{k}+c_{k}\right)\specproj{H_{b}}\varphi_{0}^{h}}<\frac{\varepsilon}{3}\label{eq:ab-1}
\end{equation}
for all $\lambda\succ\lambda_{0}$. Altogether we obtain $\norm{T_{-x_{\lambda}}k_{x_{\lambda}}-\tilde{k}_{0}}\leq(I)+(II)+(III)<\varepsilon\,.$ 
\end{proof}
\begin{thm}
\label{thm: L2relcpt} Assume that $H_{a}$ is uniformly elliptic
with symbol $a\in\cbinfd$ and that no limit operator has the eigenvalue
$\Omega$. Then the set $\left\{ T_{-x}k_{x}\colon x\in\rd\right\} $
is relatively compact in $\sobolev s$ for every $s\geq0$. 
\end{thm}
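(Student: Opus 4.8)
The plan is to deduce relative compactness of $\{T_{-x}k_x : x \in \rd\}$ in every $\sobolev s$ directly from Theorem~\ref{rkconv-1} together with the norm-continuity of $x \mapsto k_x$. The key observation is a standard net-convergence characterization of compactness: a subset $A$ of a metric space is relatively compact if and only if every net in $A$ has a convergent subnet. So I would start with an arbitrary net $\left(x_\lambda\right)_{\lambda \in \Lambda} \subset \rd$ and aim to extract a subnet along which $T_{-x_\lambda}k_{x_\lambda}$ converges in $\sobolev s$.

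First I would split into two cases. If the net $\left(x_\lambda\right)$ has a bounded subnet, then that subnet has a convergent subnet $x_\mu \to x_0 \in \rd$, and by Proposition~\ref{cor:normcont} (norm-continuity of $x \mapsto k_x$ in every Sobolev space) together with the translation-invariance of the Sobolev norm we get $T_{-x_\mu}k_{x_\mu} \to T_{-x_0}k_{x_0}$ in $\sobolev s$; here I would use that $\norm[\sobolev s]{T_{-x_\mu}k_{x_\mu} - T_{-x_0}k_{x_0}} \leq \norm[\sobolev s]{T_{-x_\mu}(k_{x_\mu}-k_{x_0})} + \norm[\sobolev s]{(T_{-x_\mu}-T_{-x_0})k_{x_0}}$ and that both terms vanish, the first by isometry and Proposition~\ref{cor:normcont}, the second by strong continuity of translations on $\sobolev s$ applied to the fixed vector $k_{x_0}$. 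If the net has no bounded subnet, then $\abs{x_\lambda} \to \infty$, i.e.\ it diverges to infinity. By Lemma~\ref{prop:Arzela}(i), the orbit $\{T_{-x}a : x \in \rd\}$ is relatively compact in $C^\infty(\rd, \c^{d\times d})$, so passing to a subnet we may assume $T_{-x_\lambda}a \to b$ in $C^\infty$, in particular pointwise, for some $b \in \cbinfd$; one checks $b$ is again positive definite with the same ellipticity constant $\theta$, so $H_b$ is a limit operator in the sense of Definition~\ref{def:limop}.

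Now the hypothesis enters: since $b$ is a limit operator, by assumption $\Omega \notin \sigma_p(H_b)$, so Theorem~\ref{rkconv-1} applies and yields $T_{-x_\lambda}k_{x_\lambda} \to \tilde k_0$ in $\sobolev s$ for every $s \geq 0$, where $\tilde k$ is the reproducing kernel of $\PW(H_b)$. This produces the desired convergent subnet in both cases, and hence $\{T_{-x}k_x : x \in \rd\}$ is relatively compact in $\sobolev s$.

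I do not expect a genuine obstacle here: the theorem is essentially a repackaging of Theorem~\ref{rkconv-1} via the abstract compactness criterion, and the only things to be careful about are (a) the bounded-net case, which must be handled separately since Theorem~\ref{rkconv-1} assumes the net diverges, and (b) confirming that the pointwise limit $b$ of $C^\infty$-convergent positive definite symbols is still an admissible symbol (bounded with all derivatives bounded --- immediate from $C^\infty$-convergence on compacta combined with the uniform bounds defining $\cbinfd$ --- and uniformly elliptic with constant $\theta$). The mildly delicate point is purely bookkeeping with subnets: one should phrase the argument as "every subnet has a further subnet converging to a limit," so that the two cases (bounded versus divergent) can be interleaved without contradiction. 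Once that is set up, the conclusion is immediate.
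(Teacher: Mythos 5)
Your proof is correct and follows essentially the same route as the paper: split into a bounded case (handled by norm-continuity of $x\mapsto k_x$ from Proposition~\ref{cor:normcont} together with strong continuity of translations) and an unbounded case (handled by Lemma~\ref{prop:Arzela} followed by Theorem~\ref{rkconv-1}). The only stylistic difference is that you argue with nets while the paper uses sequences, which suffices here since $\sobolev s$ is a metric space and relative compactness is sequential compactness.
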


\begin{proof}
This follows directly from Theorem \ref{rkconv-1}. Let $(x_{n})_{n\in\n}\subseteq\rd$
be an arbitrary sequence. By Lemma \ref{prop:Arzela} the sequence
$T_{-x_{n}}a$ has a $C^{\infty}$-convergent subsequence
$T_{-x_{n_{l}}}a$. If $\left(x_{n_{l}}\right)_{l\in\n}$ is bounded we
can assume without 
loss of generality that $x_{n_{l}}\to x\in\rd$, and $T_{-x_{n_{l}}}k_{x_{n_{l}}}\to T_{-x}k_{x}$
in $\sobolev s$ by the continuity of the translations and Proposition
\ref{prop:kernbound}. If $(x_{n_{l}})_{l\in\n}$ is unbounded
we can assume $\abs{x_{n_{l}}}\to\infty$. This case is settled by
Theorem \ref{rkconv-1} and yields the convergence of $T_{-x_{n_{l}}}k_{x_{n_{l}}}.$ 
\end{proof}
A combination of the above arguments yields the weak localization
\eqref{eq:hapcont} . 
\begin{thm}
\label{wla} Assume that $H_{a}$ is uniformly elliptic with symbol
$a\in\cbinfd$ and that no limit operator has the eigenvalue % $0$ or
$\Omega$. %all limit operators do not have no eigenvalues in $\left\{ 0,\Omega\right\} $.
Let $k$ be the reproducing kernel of $\PW(H_{a})$. Then $k$ satisfies
the weak localization property \eqref{eq:hapcont}, i.e., 
\[
\lim_{R\to\infty}\int_{\abs{y-x}>R}\abs{k\left(x,y\right)}^{2}dy=0\,.
\]
\end{thm}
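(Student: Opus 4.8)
The plan is to read off the weak localization property directly from the relative compactness established in Theorem~\ref{thm: L2relcpt}. Recall from \eqref{eq:rk} that property \eqref{eq:hapcont} is equivalent to the tail estimate
\[
\lim_{R\to\infty}\,\sup_{x\in\rd}\int_{\rd\setminus B_{R}(0)}\abs{T_{-x}k_{x}(y)}^{2}\,dy=0
\]
for the \emph{centered} reproducing kernels, so it suffices to show that the family $\cK=\left\{ T_{-x}k_{x}\colon x\in\rd\right\}$ has uniformly small $L^{2}$-tails at infinity.

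First I would apply Theorem~\ref{thm: L2relcpt} with $s=0$: since by hypothesis no limit operator of $H_{a}$ has the eigenvalue $\Omega$, the set $\cK$ is relatively compact in $\Lii$. Next I would invoke the Riesz--Kolmogorov compactness theorem in its ``necessity'' direction, namely that a relatively compact subset $\cK\subseteq\Lii$ has uniformly small $L^{2}$-tails:
\[
\lim_{R\to\infty}\,\sup_{g\in\cK}\int_{\rd\setminus B_{R}(0)}\abs{g(y)}^{2}\,dy=0.
\]
This can also be seen directly: cover $\cK$ by finitely many $\varepsilon$-balls centered at $g_{1},\dots,g_{N}$, pick $R$ so large that $\int_{\rd\setminus B_{R}(0)}\abs{g_{j}(y)}^{2}\,dy<\varepsilon^{2}$ for $j=1,\dots,N$, and conclude with the triangle inequality. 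Applying this with $g=T_{-x}k_{x}$ yields precisely the displayed tail estimate, hence \eqref{eq:hapcont}.

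It then remains to rewrite the estimate in the uncentered form stated in the theorem. With the translation convention $T_{x}f(y)=f(y-x)$ the change of variables $z=y+x$ gives
\[
\int_{\rd\setminus B_{R}(0)}\abs{T_{-x}k_{x}(y)}^{2}\,dy=\int_{\abs{z-x}>R}\abs{k_{x}(z)}^{2}\,dz=\int_{\abs{z-x}>R}\abs{k(x,z)}^{2}\,dz,
\]
and since the supremum over $x\in\rd$ of the left-hand side tends to $0$ as $R\to\infty$, so does $\int_{\abs{y-x}>R}\abs{k(x,y)}^{2}\,dy$, uniformly in $x$. This is exactly the assertion of Theorem~\ref{wla}.

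I do not expect any genuine obstacle here: the substance of the argument is packed into Theorems~\ref{rkconv-1} and \ref{thm: L2relcpt}, whose proofs combine the limit-operator machinery with elliptic regularity. The only point worth flagging is that Riesz--Kolmogorov is used in the direction ``relatively compact $\Rightarrow$ equi-tight'' rather than the more commonly quoted sufficient criterion for compactness; both implications are standard, and the equi-tightness needed here is, as indicated above, elementary to extract from a finite $\varepsilon$-net.
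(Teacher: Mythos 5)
Your proposal follows exactly the paper's own argument: apply Theorem~\ref{thm: L2relcpt} with $s=0$ to get relative compactness of $\{T_{-x}k_{x}:x\in\rd\}$ in $L^{2}$, invoke the Riesz--Kolmogorov theorem (in the ``relatively compact $\Rightarrow$ equi-tight'' direction) to obtain uniform $L^{2}$-tails, and conclude by the change of variable. Your extra remark spelling out the finite $\varepsilon$-net argument is a small elaboration the paper leaves implicit, but the route is identical.
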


%[Proof of WL]
\begin{proof}
By Theorem \ref{thm: L2relcpt} (for $s=0)$ the set $\left\{ T_{-x}k_{x}\colon x\in\rd\right\} $
is relatively compact in $\Lii$ . The Riesz-Kolmogorov theorem implies
that for all $\varepsilon>0$ there is $R>0$ such that for all $x\in\rd$
\[
\int_{\rd\setminus B_{R}\left(0\right)}\abs{T_{-x}k_{x}\left(y\right)}^{2}dy<\varepsilon^{2}\,.
\]
By a change of variable this expression reads as 
\[
\int_{\abs{y-x}>R}\abs{k\left(x,y\right)}^{2}dy<\varepsilon^{2}\,,
\]
and this is \eqref{eq:hapcont}. 
\end{proof}

\section{Proof of the Homogeneous Approximation Property (HAP) \label{sec:Proof-of-HAP}}

Next we prove the homogeneous approximation property. Recall that
$T_{x}\kappa$ is the reproducing kernel for $\sobolev s$ with $\hat{\kappa}(\omega)=(1+|\omega|^{2})^{-s}$. 
\begin{lem}
\label{lem:sobolevBessel}If $S$ is a relatively separated set in
$\rd$, then $\left\{ T_{x}\kappa\colon x\in S\right\} $ is a Bessel
sequence for $\sobolev s$, $s>d/2$. 
\end{lem}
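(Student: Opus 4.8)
The plan is to recognize the Bessel property of $\left\{T_{x}\kappa:x\in S\right\}$ in $\sobolev s$ as boundedness on $\ell^{2}(S)$ of its Gram matrix, and then to estimate that matrix by Schur's test, using the off-diagonal decay of $\kappa$ together with the relative separation of $S$. Recall the standard fact from frame theory: a family $\left\{\phi_{x}:x\in S\right\}$ in a Hilbert space $\mathcal{H}$ is a Bessel sequence with bound $B$ if and only if the analysis operator $C\colon f\mapsto(\langle f,\phi_{x}\rangle)_{x\in S}$ is bounded with $\|C\|^{2}\leq B$, which, since $\|C\|^{2}=\|CC^{*}\|$, is equivalent to boundedness on $\ell^{2}(S)$ of the operator $CC^{*}$, whose matrix is $[\langle\phi_{y},\phi_{x}\rangle]_{x,y\in S}$. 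Applied with $\phi_{x}=T_{x}\kappa$ and $\mathcal{H}=\sobolev s$, the statement reduces to showing that $[\langle T_{x}\kappa,T_{y}\kappa\rangle_{\sobolev s}]_{x,y\in S}$ defines a bounded operator on $\ell^{2}(S)$.

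First I would compute the Gram entries on the Fourier side. From $\widehat{T_{x}\kappa}(\omega)=e^{-ix\cdot\omega}\widehat{\kappa}(\omega)$ and $\widehat{\kappa}(\omega)=(1+|\omega|^{2})^{-s}$ one obtains
\[
\langle T_{x}\kappa,T_{y}\kappa\rangle_{\sobolev s}=(2\pi)^{-d/2}\int_{\rd}e^{i(y-x)\cdot\omega}(1+|\omega|^{2})^{-s}\,d\omega=\kappa(x-y),
\]
because $\kappa=\mathcal{F}^{-1}\widehat{\kappa}$ is real and even. Thus the Gram matrix is the convolution-type matrix $G=[\kappa(x-y)]_{x,y\in S}$, and Schur's test bounds its operator norm on $\ell^{2}(S)$ by $\sup_{x\in S}\sum_{y\in S}|\kappa(x-y)|$.

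Next I would estimate this last sum. Since $s>d/2$, the function $(1+|\omega|^{2})^{-s}$ is integrable on $\rd$, so $\kappa\in C_{0}(\rd)$ is bounded; moreover $|D^{\alpha}(1+|\omega|^{2})^{-s}|\leq C_{\alpha}(1+|\omega|^{2})^{-s-|\alpha|/2}\in L^{1}(\rd)$ for every $\alpha$, so $\kappa$ decays faster than any polynomial (one may alternatively quote the exponential decay of the Bessel potential kernel). Hence $\kappa$ lies in the Wiener amalgam space $W(L^{\infty},\ell^{1})$, i.e. $\|\kappa\|_{W}:=\sum_{n\in\z^{d}}\|\kappa\|_{L^{\infty}(n+[0,1]^{d})}<\infty$. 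Since $S$ is relatively separated there is $N\in\n$ with $\#\bigl(S\cap(u+[0,1]^{d})\bigr)\leq N$ for every $u\in\rd$, and therefore, for each fixed $x$,
\[
\sum_{y\in S}|\kappa(x-y)|\leq\sum_{n\in\z^{d}}\#\bigl(S\cap(x-n-[0,1]^{d})\bigr)\,\|\kappa\|_{L^{\infty}(n+[0,1]^{d})}\leq N\,\|\kappa\|_{W}<\infty,
\]
uniformly in $x$. Combining this with the Schur bound gives $\|G\|_{\ell^{2}(S)\to\ell^{2}(S)}\leq N\|\kappa\|_{W}$, hence $\sum_{x\in S}|\langle f,T_{x}\kappa\rangle_{\sobolev s}|^{2}\leq N\|\kappa\|_{W}\,\|f\|_{\sobolev s}^{2}$ for all $f\in\sobolev s$; since $f(x)=\langle f,T_{x}\kappa\rangle_{\sobolev s}$, this is exactly the desired (Bessel) sampling inequality for $\sobolev s$.

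The argument is essentially routine. The only step carrying content is the off-diagonal decay of the $\sobolev s$-reproducing kernel $\kappa$ — equivalently its membership in $W(L^{\infty},\ell^{1})$ — which follows from the smoothness and the ($L^{1}$-)integrability of $(1+|\omega|^{2})^{-s}$ when $s>d/2$; beyond bookkeeping with the amalgam norm I do not anticipate a genuine obstacle.
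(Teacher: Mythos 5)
Your proof is correct and follows essentially the same route as the paper: reduce the Bessel property to boundedness of the Gramian, establish off-diagonal decay of $\langle T_{x}\kappa,T_{y}\kappa\rangle_{\sobolev s}=\kappa(x-y)$, and conclude by Schur's test using the relative separation of $S$. The only difference is cosmetic: the paper evaluates the kernel explicitly via the modified Bessel function $\mathscr{K}_{s-d/2}$ and uses its exponential decay, whereas you derive (sufficient) superpolynomial decay from the smoothness and integrability of the symbol $(1+|\omega|^{2})^{-s}$ and organize the Schur sum with a Wiener-amalgam norm over unit cubes rather than annuli.
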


\begin{proof}
By standard facts of frame theory (see, e.g. \cite[Thm 7.6]{Heil11})
the Bessel property is equivalent to the boundedness of the Gramian
$G=\big(\inprod{T_{x}\kappa,T_{y}\kappa}_{\sobolev s}\big)_{x,y\in S}$
on $\ell^{2}\left(S\right).$ % it suffices to show that the Gramian $G=\left(\inprod{T_{x}\kappa,T_{y}\kappa}_{\sobolev s}\right)_{x,y\in S}$
% defines a bounded operator on $\ell^{2}\left(S\right).$ 
To deduce the boundedness of $G$ we first show that $G$ possesses
exponential off-diagonal decay and then apply Schur's test. The off-diagonal
decay follows from a (well-known) calculation, see, e,g. \cite[Thm 6.13]{Wendland05}
or~\cite[App.~B]{grafakos}. Let $\mathscr{J}_{r}$ denote the Bessel
function of the first kind and $\mathscr{K}_{r}$ the modified Bessel
function of the second kind. Then 
\begin{align}
\inprod{T_{x}\kappa,T_{y}\kappa}_{\sobolev s} & =\left(2\pi\right)^{-d/2}\int_{\rd}\widehat{T_{x}\kappa\,}\left(\omega\right)\overline{\widehat{T_{y}\kappa}}\left(\omega\right)(1+\abs{\omega}^{2})^{s}d\omega\nonumber \\
 & =\left(2\pi\right)^{-d/2}\int_{\rd}e^{-i\left(x-y\right)\omega}(1+\abs{\omega}^{2})^{-s}d\omega\nonumber \\
 & =C\abs{x-y}^{-(d-2)/2}\int_{0}^{\infty}\left(1+r^{2}\right)^{-s}\mathscr{J}_{\frac{d-2}{2}}\left(r\abs{x-y}\right)r^{d/2}dr\label{eq:exdec}\\
 & =C'\abs{x-y}^{s-d/2}\mathscr{K}_{s-d/2}\left(\abs{x-y}\right)\,.\nonumber 
\end{align}
Using the asymptotic decay $\mathscr{K}_{r}(x)\sim\sqrt{\pi/(2x)}e^{-x}$
for $x\to\infty$, see, e.g., \cite[Eq. 10.25.3]{DLMF}, the off-diagonal
decay of $G$ is 
\begin{equation}
\abs{\langle T_{x}\kappa,T_{y}\kappa\rangle_{\sobolev s}}\leq C''\abs{x-y}^{s-d/2-1/2}e^{-\abs{x-y}}\qquad(\abs{x-y}\to\infty)\,.\label{eq:c10}
\end{equation}
% Since $S$ is relatively separated,
The off-diagonal decay of the Gramian implies the boundedness of the
Gramian as follows. By \eqref{eq:c10} there exists $N_{0}\in\n$
such that $\abs{G_{xy}}\leq Ce^{-c\abs{x-y}}$, if \textbf{$\abs{x-y}>N_{0}.$}

For $x\in S$ and $k\in\n$ set $A_{k}\left(x\right)=\left\{ y\in S\colon k<\abs{y-x}\leq k+1\right\} $.
Since $S\subset\rd$ is relatively separated, %we may use%   if
% for some (and hence all) $r>0$ %
there exists $r>0$ such that % for all $\rho\geq\rho_{0}$, there exists a constant $C$ such that
% for all $x\in\rd$
\[
\max\#(S\cap B_{r}(x))<\infty\,.%\leqC.
\]
A covering argument (of a large ball $B_{R}(z)$ by balls $B_{r}(x)$)
implies that $\#(S\cap B_{R}(z))=\cO(R^{d})$ for arbitrary $R>0$.
Consequently we also obtain $\#\,\text{\ensuremath{A_{k}\left(x\right)}\ensuremath{\ensuremath{\leq}C\ensuremath{k^{d}}}}$
independent of $x$. Then 
\begin{align*}
\sum_{y\in S}\abs{G_{xy}} & =\sum_{k\in\n}\sum_{y\in A_{k}\left(x\right)}\abs{G_{xy}}\\
 & =\sum_{k\leq N_{0}}\sum_{y\in A_{k}\left(x\right)}\abs{G_{xy}}+\sum_{k>N_{0}}\sum_{y\in A_{k}\left(x\right)}\abs{G_{xy}}\\
 & \leq C\,\#\left(B_{N_{0}+1}\left(x\right)\cap S\right)+C\sum_{k>N_{0}}e^{-ck}\,\#A_{k}(x)\\
 & \leq C_{1}(N_{0}+1)^{d}+C_{2}\sum_{k>N_{0}}e^{-ck}k^{d}\,.
\end{align*}
This expression is bounded independently of $x$. %  because $S$ is
% relatively separated. 
Now Schur's test implies that the Gramian is bounded on $\ell^{2}\left(S\right)$. 
\end{proof}
\begin{thm}[HAP]
Assume that $H_{a}$ is uniformly elliptic with symbol $a\in\cbinfd$
and that % $0,\Omega\notin\sigma_{p}\left(H_{b}\right)$
$\Omega\notin\sigma_{p}\left(H_{b}\right)$ for every limit operator
$H_{b}$. Let $\left\{ k_{x}\colon x\in S\right\} $ be a Bessel sequence
in $PW_{\Omega}\left(H_{a}\right)$. Then for every $\varepsilon>0$
there exists an $R>0$ such that for all $y\in S$ 
\[
\sum_{x\in S\setminus B_{R}(y)}\abs{k(y,x)}^{2}<\varepsilon^{2}.
\]
\end{thm}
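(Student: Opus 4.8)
The plan is to transport the sum into the $\sobolev s$-geometry. Fix once and for all an integer $s>d/2$, so that $\sobolev s\hookrightarrow C_0(\rd)$ and, by \eqref{eq:c12-1}, point evaluation on $\PW(H_a)$ is implemented both by the $L^2$-kernel $k_x$ and by the translates $T_x\kappa$ of the Bessel kernel of $\sobolev s$. The advantage of the $\sobolev s$-picture is that one may multiply by smooth cut-off functions without leaving the space, which is impossible inside $\PW(H_a)$ itself. For $y\in S$ put $g_y:=T_{-y}k_y$; this lies in $\sobolev s$ because $k_y\in\PW(H_a)\subseteq\sobolev s$ and $\sobolev s$ is translation invariant. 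Using \eqref{eq:c12-1} with $f=k_y$, the translation invariance of the $\sobolev s$-inner product, and the reproducing property of $\sobolev s$,
\[
\abs{k(y,x)}=\abs{k_y(x)}=\abs{\inprod{k_y,T_x\kappa}_{\sobolev s}}=\abs{\inprod{g_y,T_{x-y}\kappa}_{\sobolev s}}=\abs{g_y(x-y)}\,,
\]
so, substituting $z=x-y$, the quantity to control is $\sum_{z\in(S-y)\setminus B_R(0)}\abs{g_y(z)}^2$.

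Two uniform facts feed the estimate. First, since $\{k_x:x\in S\}$ is Bessel in $\PW(H_a)$, the set $S$ is relatively separated; let $N=\sup_{w\in\rd}\#(S\cap B_1(w))$. Every translate $S-y$ has the same constant $N$, and the proof of Lemma~\ref{lem:sobolevBessel} produces the Bessel bound of $\{T_z\kappa:z\in S'\}$ for $\sobolev s$ purely from $N$ (the Gramian's off-diagonal decay depends only on differences of indices, and Schur's test uses only the counting estimate governed by $N$). Hence there is $B=B(N)$, \emph{independent of $y$}, with $\sum_{z\in S-y}\abs{\inprod{g,T_z\kappa}_{\sobolev s}}^2\le B\,\norm[\sobolev s]{g}^2$ for all $g\in\sobolev s$. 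Second, by Theorem~\ref{thm: L2relcpt} the family of centered kernels $\{g_y:y\in\rd\}=\{T_{-x}k_x:x\in\rd\}$ is relatively compact in $\sobolev s$; this is exactly where the hypothesis $\Omega\notin\sigma_p(H_b)$ for all limit operators enters.

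With these in hand I would finish as follows. Take a fixed $\psi\in\cbinf$ with $\psi\equiv0$ on $B_{1/2}$, $\psi\equiv1$ off $B_1$, $0\le\psi\le1$, and set $\psi_R(x)=\psi(x/R)$, so that $\psi_R\equiv1$ off $B_R$ while $\psi_R$ and all of its derivatives are supported off $B_{R/2}$ and bounded uniformly in $R\ge1$. Then multiplication by $\psi_R$ is bounded on $\sobolev s$ uniformly in $R$, and by the Leibniz rule together with dominated convergence $\norm[\sobolev s]{\psi_R g}\to0$ as $R\to\infty$ for each fixed $g\in\sobolev s$. Since $g(z)=(\psi_R g)(z)$ for $\abs z\ge R$, the reproducing property and the uniform Bessel bound give $\sum_{z\in(S-y)\setminus B_R(0)}\abs{g(z)}^2\le B\,\norm[\sobolev s]{\psi_R g}^2$ for every $g\in\sobolev s$ and every $y$. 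Now, given $\varepsilon>0$, pick $\delta>0$ with $(1+\sqrt B)\delta<\varepsilon$, a finite $\delta$-net $g_{y_1},\dots,g_{y_M}$ of $\{g_y\}$ in $\sobolev s$ (relative compactness), and then $R\ge1$ so large that $\sqrt B\,\norm[\sobolev s]{\psi_R g_{y_i}}<\delta$ for all $i$. For arbitrary $y\in S$ choose $i$ with $\norm[\sobolev s]{g_y-g_{y_i}}<\delta$ and split $g_y=g_{y_i}+(g_y-g_{y_i})$; the triangle inequality in $\ell^2\bigl((S-y)\setminus B_R(0)\bigr)$, the cut-off estimate applied to $g_{y_i}$, and the Bessel bound applied to $g_y-g_{y_i}$ yield
\[
\Bigl(\sum_{x\in S\setminus B_R(y)}\abs{k(y,x)}^2\Bigr)^{1/2}=\Bigl(\sum_{z\in(S-y)\setminus B_R(0)}\abs{g_y(z)}^2\Bigr)^{1/2}<\delta+\sqrt B\,\delta<\varepsilon\,,
\]
uniformly in $y\in S$, which is the assertion.

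The genuinely delicate point is securing the two \emph{uniformities in the base point} $y$ simultaneously: uniformity of the Bessel constant for the translated families $\{T_z\kappa:z\in S-y\}$, which follows from translation invariance of relative separation and from inspecting the proof of Lemma~\ref{lem:sobolevBessel}; and uniform smallness of $\norm[\sobolev s]{\psi_R g_y}$ as $R\to\infty$, which is precisely what the relative compactness of $\{T_{-x}k_x\}$ in $\sobolev s$ (Theorem~\ref{thm: L2relcpt}, resting on $\Omega\notin\sigma_p(H_b)$) delivers. Everything else — the cut-off bounds, the change of variables, and the two reproducing identities — is routine.
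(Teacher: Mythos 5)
Your proof is correct and follows essentially the same route as the paper's: the dual representation of point evaluation via the $\sobolev s$-kernel \eqref{eq:c12-1}, the Bessel property of $\{T_{x}\kappa\colon x\in S\}$ from Lemma~\ref{lem:sobolevBessel}, and the relative compactness of the centered kernels from Theorem~\ref{thm: L2relcpt}. The only real divergence is in how uniformity in $y$ of the tail estimate is extracted from compactness: the paper invokes the Kolmogorov--Riesz theorem for translation-invariant Banach spaces to obtain a single cut-off $\psi$ with $\norm[\sobolev s]{(1-\psi)T_{-y}k_{y}}\leq\varepsilon$ for all $y$ at once, whereas you derive the same uniformity by hand from a finite $\delta$-net together with the Bessel bound applied to the net-approximation error --- both are valid, yours being more self-contained; conversely, your derivation of the $y$-uniform Bessel constant via the counting function $N$ can be shortened as in the paper, since translation is an isometry of $\sobolev s$ and hence the Gramian of $\{T_{x-y}\kappa\colon x\in S\}$ is literally that of $\{T_{x}\kappa\colon x\in S\}$.
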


\begin{proof}
If $\left\{ k_{x}\colon x\in S\right\} $ is a Bessel sequence of
reproducing kernels, then $S$ is relatively separated in $\rd$ (see
\cite[Lemma 3.7]{densrkhs16}). Lemma~\ref{lem:sobolevBessel} implies
that $\{T_{x}\kappa:x\in S\}$ is also a Bessel sequence in $\sobolev s$
for $s>d/2$.

Choose $\varepsilon>0.$ Since $\left\{ T_{-x}k_{x}\colon x\in\rd\right\} $
is relatively compact in $\sobolev s$ for $s\geq0$ by Theorem \ref{thm: L2relcpt},
the Kolmogorov-Riesz theorem for translation-invariant Banach spaces~\cite{Fei84}
asserts that there exists a $R=R_{\varepsilon}>0$ and a function
$\psi\in C_{c}^{\infty}\left(\rd\right)$ satisfying $\psi|_{B_{R/2}(0)}=1$,
$\supp\psi\subseteq B_{R}(0)$ such that 
\[
\norm[\sobolev s]{T_{-x}k_{x}\left(1-\psi\right)}\leq\varepsilon\quad\text{for all \ensuremath{x\in\rd}\,.}
\]
We now use the fundamental observation \eqref{eq:c12-1} that the
point evaluation in $\PW(H_{a})$ can be expressed in two ways. For
$f=k_{x}$ we have 
\begin{equation}
k(y,x)=\langle k_{y},k_{x}\rangle_{L^{2}}=\langle k_{y},T_{x}\kappa\rangle_{\sobolev s}\,.\label{eq:c13}
\end{equation}
Since $\left\{ T_{x}\kappa\colon x\in S\right\} $ is a Bessel sequence
in $\sobolev s$ with bound $B$, the set $\left\{ T_{x-y}\kappa\colon x\in S\right\} $
is a Bessel sequence with the same bound. Observe that for $\abs u>R$
we obtain 
\[
\inprod{\psi T_{-y}k_{y},T_{u}\kappa}_{\sobolev s}=T_{-y}k_{y}(u)\psi(u)=0.
\]
This implies

\begin{align*}
\sum_{x\in S\setminus B_{R}(y)}\abs{\inprod{k_{y},T_{x}\kappa}_{\sobolev s}}^{2} & =\sum_{x\in S\setminus B_{R}(y)}\abs{\inprod{T_{-y}k_{y},T_{x-y}\kappa}_{\sobolev s}}^{2}\\
 & =\sum_{x\in S\setminus B_{R}(y)}\abs{\inprod{\left(1-\psi)\right)T_{-y}k_{y},T_{x-y}\kappa}_{\sobolev s}}^{2}\\
 & \leq B\norm[\sobolev s]{\left(1-\psi)\right)T_{-y}k_{y}}^{2}\leq B\varepsilon^{2}\,,
\end{align*}
and this is HAP. 
\end{proof}
\begin{proof}[Proof of Theorem~\ref{mainabstr}]
After the verification of the properties (\ref{eq:hapcont}) and
(\ref{eq:hapclass}) of the reproducing kernel, the version for the
dimension-free density of Theorem~\ref{abstrdens} is applicable
and yields Theorem~\ref{mainabstr}. The statement asserts the existence
of a critical density for sampling and interpolation with the dimension-free
Beurling density $D_{0}^{\pm}(S)$. 
\end{proof}

\section{Geometric Beurling Densities\label{sec:Geometric-Beurling-Densities}}

In this section we derive results for the geometric densities~\eqref{eq:2low}.
According to %the abstract density statement of
Theorem~\ref{abstrdens} this step requires the computation of the
averaged trace $|\mu(B_{r}(x))|^{-1}\int_{B_{r}(x)}k(x,x)\,d\mu(x)$
of the reproducing kernel. This version of the density theorems is
of interest because it relates the critical density in $\PW(H_{a})$
to the geometry defined by the differential operator $H_{a}$. The
explicit computation of the averaged trace becomes possible by introducing
a suitable compactification of $\rd$ and then extending the centered
kernels $T_{-x}k_{x}$ to this compactification.

\subsection{The basic computation: constant coefficients\label{subsec:Constant-Coefficients}}

For reference we mention the case when $H_{b}=-\sum_{j,k}\partial_{j}b_{jk}\partial_{k}=-\sum_{j,k}b_{jk}\partial_{j}\partial_{k}$
is a differential operator with  constant coefficients
$b_{jk}$. 

Define 
\[
\Sigma_{\Omega}^{b}=\{\xi\in\rd:b\xi\cdot\xi\leq\Omega\}=b^{-1/2}\overline{B_{\Omega^{1/2}}(0)}\,
\]
with volume 
\begin{equation}
\abs{\Sigma_{\Omega}^{b}}=\det\big(b^{-1/2}\big)\abs{B_{\Omega^{1/2}}(0)}=\det\big(b^{-1/2}\big)\Omega^{d/2}\abs{B_{1}}\,.\label{eq:volepps}
\end{equation}
Since $\widehat{H_{b}f}(\xi)=\sum_{j,k}b_{jk}\xi_{j}\xi_{k}\hat{f}(\xi)=(b\xi\cdot\xi)\hat{f}(\xi)$,
the spectral subspace is 
\[
\PW(H_{b})=\chi_{[0,\Omega]}(H_{b})\Lii=\{f\in\Lii\colon\supp\hat{f}\subseteq\Sigma_{\Omega}^{b}\}\,.
\]
The kernel of $\PW(H_{b})$ is 
\begin{equation}
\tilde{k}(x,y)=(2\pi)^{-d/2}(\mathcal{F}^{-1}\chi_{\Sigma_{\Omega}^{b}})(x-y)\,,\label{eq:pwkern}
\end{equation}
whence 
\begin{equation}
\tilde{k}(x,x)=(2\pi)^{-d/2}(\mathcal{F}^{-1}\chi_{\Sigma_{\Omega}^{b}})(0)=\frac{\abs{\Sigma_{\Omega}^{b}}}{\left(2\pi\right)^{d}}=\frac{\abs{B_{1}}}{\left(2\pi\right)^{d}}\det(b^{-1/2})\Omega^{d/2}\,.\label{eq:limkern}
\end{equation}
By Landau's theorem~\cite{landau67} a sampling set $S$ for $\PW(H_{b})$
has Beurling density at least $D^{-}(S)\geq|\Sigma_{\Omega}^{b}|/(2\pi)^{d}$.

\subsection{The Higson compactification\label{subsec:The-Higson-compactification}}

We recall how a compactification arises in  Gelfand theory. Let
$C_{\gamma}\left(\rd\right)$ be a  unital
$C^{*}$ algebra of functions on $\rd $ satisfying the embeddings
$C_{0}\left(\rd\right)\subset C_{\gamma}\left(\rd\right)\subset
C_{b}\left(\rd\right)$. 
% (By Stone Weierstrass the first inclusion is equivalent to $C_{\gamma}\left(\rd\right)$
% separates points).
The \emph{maximal ideal space }$M_{\gamma}$ of
$C_{\gamma}\left(\rd\right)$ is the space of all multiplicative homomorphisms
$\phi\colon C_{\gamma}\left(\rd\right)\to\c$. Equipped with the weak-star
topology $M_{\gamma}$ is a compact Hausdorff space. The point evaluations
$\delta_{x}\left(f\right)=f\left(x\right)$ constitute an embedding
$\gamma$ of $\rd$ into $M_{\gamma}$ via $\gamma\left(x\right)=\delta_{x}$,
and $\gamma\left(\rd\right)$ is dense in $M_{\gamma}$ and homeomorphic
to $\rd$. Thus, the pair $\left(\gamma,M_{\gamma}\right)$ is a compactification
of $\rd$, which we will call $\gamma\rd$. The corona of $\gamma\rd$
is $\partial_{\gamma}\rd=\gamma\rd\setminus\gamma\left(\rd\right)$.
By abuse of notation we will  identify a point  $x\in \rd$ with
its point evaluation $\delta _x$. Then $C_{\gamma}\left(\rd\right)$ is isometrically
isomophic to $C\left(\gamma\rd\right)$. We denote the image of $f\in C_{\gamma}\left(\rd\right)$
in $C\left(\gamma\rd\right)$ by $\bar{f}$. See, e.g., \cite{Engelking89}
for the basics of compactifications, and \cite{Gamelin69} for compactifications
of function algebras.

As noted in Section \ref{subsec:Slowly-oscillating-symbols.} the
space $\so$ of slowly oscillating functions with supremum norm is
a commutative unital $C^{*}$-algebra. Thus there is a compactification
$h\rd$ of $\rd$, the \emph{Higson compactification}, such that $\so$
is isometrically isomorphic to $C\left(h\rd\right)$. It is known
that $h\rd$ is non-metrizable, and even more,  points of the corona $h\rd\setminus\rd$
can only be reached by nets, see, e.g., \cite[2.4.10]{MR2075882}. 
Therefore we need to work with nets instead of sequences.

The relevance of the Higson compactification and the algebra of slowly
oscillating functions in our context is given by the fact that translations
act trivially on the corona $\partial_{h}\rd$.
\begin{lem}
\label{lem:trivtrans-1} If $x_{\lambda}\to\eta\in\partial_{h}\rd$, 
then $x+x_{\lambda}\to\eta$ for all $x\in\rd$.
\end{lem}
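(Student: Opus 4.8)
The plan is to unwind the definition of convergence in the Higson compactification and reduce the claim to the slow-oscillation estimate of Definition~\ref{def:slow}. Having identified $\rd$ with its image under the point evaluations $\delta_{x}$, convergence of a net in $h\rd$ is convergence in the weak-$*$ topology, i.e., pointwise convergence of the associated multiplicative functionals on $\so$. Thus $x_{\lambda}\to\eta$ means $f(x_{\lambda})\to\bar f(\eta)$ for every $f\in\so$, and the assertion $x+x_{\lambda}\to\eta$ amounts to proving $f(x+x_{\lambda})\to\bar f(\eta)$ for every $f\in\so$ and every fixed $x\in\rd$.

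First I would record that $|x_{\lambda}|\to\infty$. Since $\eta\in\partial_{h}\rd=h\rd\setminus\rd$, the net $(x_{\lambda})$ must eventually leave every compact subset of $\rd$: otherwise some subnet would lie in a compact set $K\subset\rd$, a further subnet would converge in $\rd$ to some point $z\in K$, and then the corresponding point evaluations would converge in $h\rd$ both to $\delta_{z}$ and to $\eta$; since $h\rd$ is Hausdorff and $\delta_{z}\in\rd$ while $\eta\notin\rd$, this is impossible. Hence $|x_{\lambda}|\to\infty$.

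Next, fix $f\in\so$ and $x\in\rd$. Applying Definition~\ref{def:slow} with a compact set $M$ containing $x$ (a closed ball, say) gives $\lim_{|y|\to\infty}\bigl|f(y+x)-f(y)\bigr|=0$. Evaluating along the net $(x_{\lambda})$ and using $|x_{\lambda}|\to\infty$, we obtain $\bigl|f(x_{\lambda}+x)-f(x_{\lambda})\bigr|\to0$. Combining this with $f(x_{\lambda})\to\bar f(\eta)$, which holds by the hypothesis $x_{\lambda}\to\eta$, we conclude $f(x+x_{\lambda})\to\bar f(\eta)$. As $f\in\so$ was arbitrary, the net of point evaluations $\delta_{x+x_{\lambda}}$ converges weak-$*$ to $\eta$, that is, $x+x_{\lambda}\to\eta$ in $h\rd$.

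The argument is short; the only point demanding care is the passage to nets. The slow-oscillation bound is a statement about the scalar quantity $|f(y+x)-f(y)|$ as $|y|\to\infty$, and it must be invoked along the (possibly non-sequential) net $(x_{\lambda})$, which is legitimate precisely once $|x_{\lambda}|\to\infty$ has been established as a statement about nets in the first step. I expect no further obstacle.
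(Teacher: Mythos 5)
Your proof is correct and follows the same route as the paper: unwind weak-$*$ convergence and apply the slow-oscillation estimate of Definition~\ref{def:slow} along the net. The only difference is that you explicitly verify $|x_{\lambda}|\to\infty$ (via Hausdorffness of $h\rd$), a step the paper's proof leaves implicit; this is a welcome clarification, not a divergence.
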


\begin{proof}
By definition $x_{\lambda}\to\eta\in\partial_{h}\rd$ if $f\left(x_{\lambda}\right)\to\bar{f}\left(\eta\right)=\eta\left(f\right)$
for every $f\in\so$. From the definition of $\so$ we obtain \label{lem:trivtrans}
\[
\lim_{\lambda}\abs{f\left(x_{\lambda}\right)-f\left(x+x_{\lambda}\right)}=0
\]
 for every $x\in\rd$, so $f\left(x_{\lambda}+x\right)\to\bar{f}\left(\eta\right)$
for every $f\in\so$ as well.
\end{proof}
One can show that $h\rd$ is the \emph{maximal} compactification of
$\rd$ with this property: every $C_{\gamma}\left(\rd\right)$ as
above with translations acting trivially on $\partial_{\gamma}\rd$
is a subalgebra of $\so$. 

We need the following fact \cite{Roe03}.
\begin{prop}
\label{prop:C0kernel}Let $C_{\gamma}\left(\rd\right)$ be a $C^{*}$-algebra
of functions on $\rd$ as above with corresponding compactification
$\gamma\rd$ of $\rd$. If $f\in C_{\gamma}\left(\rd\right)$ satisfies
\[
\bar{f}|_{\partial_{\gamma}\rd}\equiv0
\]
 then $f\in C_{0}\left(\rd\right)$.
\end{prop}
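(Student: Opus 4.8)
The plan is to prove the contrapositive in the form of a contradiction: assume $f\in C_{\gamma}\left(\rd\right)$ satisfies $\bar f|_{\partial_{\gamma}\rd}\equiv0$ but $f\notin C_{0}\left(\rd\right)$. Then there are $\varepsilon>0$ and a sequence $\left(x_{n}\right)\subset\rd$ with $\abs{x_{n}}\to\infty$ and $\abs{f(x_{n})}\geq\varepsilon$ for all $n$. The goal is to manufacture a point of the corona at which $\bar f$ is bounded away from $0$.

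First I would invoke compactness: viewing the sequence $\left(\delta_{x_{n}}\right)_{n\in\n}$ as a net in the compact Hausdorff space $\gamma\rd$, it has a subnet $\left(\delta_{x_{n_{\lambda}}}\right)_{\lambda}$ converging in the weak-$\ast$ topology to some $\eta\in\gamma\rd$; since the index map of a subnet is cofinal in $\n$, we still have $\abs{x_{n_{\lambda}}}\to\infty$. Next I would check that $\eta$ lies in the corona, i.e.\ that $\eta$ is not a point evaluation. This is exactly where the hypothesis $C_{0}\left(\rd\right)\subset C_{\gamma}\left(\rd\right)$ is used: for every $g\in C_{0}\left(\rd\right)$ we get $\bar g\left(\eta\right)=\lim_{\lambda}g\left(x_{n_{\lambda}}\right)=0$ because $\abs{x_{n_{\lambda}}}\to\infty$, whereas any point evaluation $\delta_{x}$, $x\in\rd$, satisfies $\delta_{x}\left(g\right)=g\left(x\right)\neq0$ for a suitable bump function $g\in C_{0}\left(\rd\right)$; hence $\eta\neq\delta_{x}$ for all $x$, so $\eta\in\partial_{\gamma}\rd$. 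Finally, applying weak-$\ast$ convergence to $f$ itself gives $\bar f\left(\eta\right)=\lim_{\lambda}f\left(x_{n_{\lambda}}\right)$, and $\abs{f(x_{n_{\lambda}})}\geq\varepsilon$ forces $\abs{\bar f\left(\eta\right)}\geq\varepsilon>0$, contradicting $\bar f|_{\partial_{\gamma}\rd}\equiv0$.

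Everything here rests on the identification of $C_{\gamma}\left(\rd\right)$ with $C\left(\gamma\rd\right)$ and on the density of the point evaluations $\gamma\left(\rd\right)$ in $\gamma\rd$, both recalled in the paragraph preceding the statement. The only point that needs a little care — and the reason the argument is phrased throughout with nets rather than sequences — is that $\gamma\rd$ need not be metrizable (as already noted for the Higson compactification), so compactness only yields a convergent subnet of $\left(\delta_{x_{n}}\right)$; this is harmless, since a subnet of a sequence escaping to infinity still escapes to infinity. I do not expect any genuine obstacle: this is a standard Gelfand-theoretic fact, and the proof is essentially bookkeeping once the corona is identified with the set of cluster points of nets in $\rd$ that leave every compact set.
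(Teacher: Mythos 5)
Your proof is correct and follows essentially the same route as the paper's: both extract a convergent subnet of a net escaping to infinity, identify its limit as a corona point, and invoke $\bar f|_{\partial_{\gamma}\rd}\equiv 0$; you merely phrase it as a contradiction where the paper argues directly via the "every subnet has a further subnet converging to $0$" criterion. In fact you spell out a detail the paper only asserts, namely why the limit $\eta$ of an unbounded net must lie in the corona (it annihilates $C_{0}(\rd)$, so it is no point evaluation).
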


\begin{proof}
Let $(x_{\lambda})_{\lambda\in\Lambda}\subset\rd$ be an
unbounded net, $\lim_{\lambda}\abs{x_{\lambda}}=\infty$. As $\gamma\rd$
is compact, every subnet of $(x_{\lambda})$ has a convergent
subnet $(x_{\mu})_{\mu\in M}$, and $\lim_{\mu}x_{\mu}=\eta\in\partial_{\gamma}\rd$
by the assumption of unboundedness. So $\lim_{\mu}f(x_{\mu})=\bar{f}(\eta)=0$
for a subnet of a given subnet, and therefore $\lim_{\lambda}f(x_{\lambda})=0$.
This means $f\in C_{0}(\rd)$.
\end{proof}
We next study uniformly elliptic operators $H_{a}$ with a symbol
in $a\in C_{h}^{\infty}(\rd,\mathbb{C}^{d\times d})$.
By definition $a$ has a continuous extension to $h\rd$. Thus for
$\eta\in\partial_{h}\rd$ the symbol $\bar{a}\left(\eta\right)=\lim_{x\to\eta}a(x)$
is well defined, and by Proposition \ref{prop:constlim}
$H_{\bar{a}\left(\eta\right)}$ is a differential operator with
constant coefficients. Let $k^{\eta}$ denote the reproducing 
kernel of $\PW\left(H_{\bar{a}(\eta)}\right)$. We show that the
mapping $x\mapsto T_{-x}k_{x}$ has a continuous extension to $h\rd$. 
\begin{prop}
\label{prop: higs-cont}Let the symbol $a\in\soid$ be the symbol
of the operator $H_{a}$ and let $k_{x}$ be the reproducing kernel
of $\PW(H_{a})$. Set $K(x)=T_{-x}k_{x}\in\lrd$. Then $K$ extends
to a continuous function from $h\rd$ to $\lrd $ %$\sobolev s$, $s\geq0$
by setting $K\left(\eta\right)=k_{0}^{\eta}$ for $\eta\in h\rd$. In
particular, the diagonal $k(x,x) = \|k_x\|_2^2$ is a slowly
oscillating function. 
\end{prop}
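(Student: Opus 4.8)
The plan is to view $K$ as a continuous map from $\rd$ into the compact metric space $\mathcal K:=\overline{\{T_{-x}k_x:x\in\rd\}}^{\lrd}$, which is compact by Theorem~\ref{thm: L2relcpt}; $K$ is continuous on $\rd$ by the norm-continuity of $x\mapsto k_x$ (Proposition~\ref{cor:normcont}) together with strong continuity of translations. A continuous map from a dense subspace into a compact metric space extends continuously to a compactification precisely when, at each point $\eta$ of the compactification, all cluster values coincide; equivalently, when for every net $(x_\lambda)\subset\rd$ with $x_\lambda\to\eta$ in $h\rd$ the net $K(x_\lambda)=T_{-x_\lambda}k_{x_\lambda}$ converges in $\lrd$ to a limit depending only on $\eta$. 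I would verify exactly this and read off $K(\eta)$ as that common limit.

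For $\eta=x_0\in\rd$ this is trivial: $x_\lambda\to x_0$ in $\rd$, hence $T_{-x_\lambda}k_{x_\lambda}\to T_{-x_0}k_{x_0}=K(x_0)$. The substantive case is $\eta\in\partial_h\rd$, and here the decisive structural point is Lemma~\ref{lem:trivtrans-1}: because translations act trivially on the Higson corona, $x_\lambda\to\eta$ forces $z+x_\lambda\to\eta$ for every $z\in\rd$ (and such a net is necessarily unbounded, since a bounded subnet would converge to a point of $\rd\subset h\rd$). Since $a\in\soid\subset\sod$ extends continuously to $h\rd$, this yields
\[
(T_{-x_\lambda}a)(z)=a(z+x_\lambda)\longrightarrow\bar a(\eta)\qquad\text{for every }z\in\rd ,
\]
so $T_{-x_\lambda}a\to b:=\bar a(\eta)$ pointwise, where $b$ is a \emph{constant} positive definite Hermitian matrix (uniform ellipticity and the Hermitian symmetry pass to the limit). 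Thus $H_b$ is similar to a Laplacian and in particular $\Omega\notin\sigma_p(H_b)$. Theorem~\ref{rkconv-1} then applies and gives $T_{-x_\lambda}k_{x_\lambda}\to\tilde k_0$ in $\lrd$ (in fact in every $\sobolev s$), where $\tilde k_0$ is the reproducing kernel of $\PW(H_b)=\PW(H_{\bar a(\eta)})$ at the origin, i.e.\ $\tilde k_0=k_0^\eta$; this limit depends on $\eta$ only through $\bar a(\eta)$. Hence all cluster values of $K$ at $\eta$ equal $k_0^\eta$, and $K$ extends continuously to $h\rd$ with $K(\eta)=k_0^\eta$.

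For the last assertion, the reproducing property together with translation invariance of the $L^2$-norm gives
\[
k(x,x)=k_x(x)=\langle k_x,k_x\rangle_{L^2}=\|k_x\|_2^2=\|T_{-x}k_x\|_2^2=\|K(x)\|_2^2 .
\]
Since $K\colon h\rd\to\lrd$ is continuous and $v\mapsto\|v\|_2^2$ is continuous on $\lrd$, the function $\eta\mapsto\|K(\eta)\|_2^2$ is continuous on $h\rd$ and restricts on $\rd$ to $x\mapsto k(x,x)$, which is itself bounded and continuous (Propositions~\ref{prop:kernbound} and~\ref{cor:normcont}); by the isomorphism $C_h(\rd)\cong C(h\rd)$ that defines the Higson compactification, $k(x,x)$ is therefore slowly oscillating.

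I do not expect a genuine analytic obstacle: the hard work is already packaged in Theorem~\ref{rkconv-1}. The one point demanding care is the upgrade from net-convergence statements to a bona fide continuous extension over the non-metrizable compactification $h\rd$ --- i.e.\ checking that the cluster set of $K$ at each corona point is a single point --- together with the observation that it is precisely the triviality of the translation action on the Higson corona (Lemma~\ref{lem:trivtrans-1}) that collapses all the translates $T_{-x_\lambda}a$ to one and the same constant matrix $\bar a(\eta)$, which is exactly what makes Theorem~\ref{rkconv-1} applicable here.
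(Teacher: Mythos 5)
Your argument is correct and its core coincides with the paper's: both proofs rest on exactly the same two ingredients, namely Lemma~\ref{lem:trivtrans-1} (translations act trivially on the Higson corona, so $T_{-x_\lambda}a\to\bar a(\eta)$ pointwise with a \emph{constant} limit) and Theorem~\ref{rkconv-1} (convergence of the centered kernels to $k_0^\eta$ in $L^2$), including the observation that the constant-coefficient limit operator has no point spectrum so that the hypothesis of Theorem~\ref{rkconv-1} is automatic.

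The one place where you genuinely deviate is the final continuity check. The paper verifies continuity of $\eta\mapsto k_0^\eta$ on the corona by an explicit computation, using the Fourier description $k_0^\eta=(2\pi)^{-d/2}\mathcal{F}^{-1}\chi_{\Sigma_\Omega^{\bar a(\eta)}}$ and the continuity of $\bar a$ to show $\|k_0^{\eta_\lambda}-k_0^\eta\|_2\to0$. You instead invoke the abstract extension theorem for continuous maps of a dense subset into a compact Hausdorff (hence regular) space: once every net $x_\lambda\to\eta$ yields the same limit $k_0^\eta$, the extension is automatically continuous; the compact codomain is supplied by Theorem~\ref{thm: L2relcpt}. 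Both routes are valid; yours saves the explicit corona computation at the price of citing the general extension principle, while the paper's computation has the side benefit of exhibiting $K(\eta)$ concretely, which is then reused in the trace computation of the next subsection. Your handling of the side points (a net converging to a corona point must satisfy $|x_\lambda|\to\infty$; the identity $k(x,x)=\|K(x)\|_2^2$ and the isomorphism $C_h(\rd)\cong C(h\rd)$ for the slow-oscillation claim) is also correct.
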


\begin{proof}
(i) By Proposition \ref{cor:normcont} the centered reproducing kernel $K$  is continuous
from $\rd$ to $\Lii$. To show that $K\in C_h (\rd, \mathbb{C} ^{d
  \times d})$, we need to extend $K$ to the Higson corona $\partial
_h\rd $.

(ii) This is accomplished by means of the fundamental
Theorem~\ref{rkconv-1}. Let $(x_\lambda )\subseteq \rd $ be an unbounded
net  such that $x_\lambda \to \eta \in \partial _h \rd $. Since $a\in
C_h  (\rd , \mathbb{C} ^{d \times d})$, there is a continuous
function $\overline{a} \in C(h\rd , \mathbb{C} ^{d \times d})$, such that
$\lim _{\lambda  } a(x_\lambda) = \overline{a}( \eta )$. Furthermore,  for $x\in
\rd $ arbitrary, $x+x_\lambda \to \eta $ by Lemma~\ref{lem:trivtrans-1}, and this fact implies the
pointwise convergence 
$$
\lim _{\lambda } T_{-x_\lambda}a(x) = \overline{a} (\eta ) \, .
$$
Clearly, the spectrum of the (constant coefficient) operator
$H_{\overline{a} (\eta )}$ is continuous and does not contain any
eigenvalues. The assumptions of Theorem~\ref{rkconv-1} are thus
satisfied.

(iii) To formulate its conclusion, denote the reproducing kernel of
$\PW (H_{\overline{a} (\eta )})$ by $k^\eta $. Then by
Theorem~\ref{rkconv-1}
$$
\lim _{\lambda} T_{-x_\lambda}k_{x_\lambda} = k_0^\eta
$$
in the $L^2$-norm, and this holds for every net $(x_\lambda)$ with
$x_\lambda \to \eta $. Thus we must take the limiting function to
be
$$
K (\eta ) = k_0^\eta =
(2\pi)^{-d/2}
\mathcal{\mathcal{F}}^{-1}\Big(\chi_{\Sigma_{\Omega}^{\overline{a}
      \left(\eta\right)}}\Big) \, ,
$$
with the explicit formula for the kernel given by \eqref{eq:pwkern}. 

(iv) It remains to be shown that the limiting kernel $K $ is
continuous on $\partial _h \rd$. Let $\eta _\lambda  \to \eta \in
\partial _h \rd $. Then with
the definition of $\Sigma _\Omega ^b$ and \eqref{eq:volepps} we obtain 
\begin{align*}
\norm[2]{k_{0}^{\eta_{\lambda }}-k_{0}^{\eta}}^{2}
&=(2\pi )^{-d} \| \chi_{\Sigma_{\Omega}^{\overline{a} \left(\eta_{\lambda}\right)}}
  -\chi_{\Sigma_{\Omega}^{\overline{a}\left(\eta\right)}}\| _2^{2}\\
  &=(2\pi )^{-d} \Big( |\Sigma_{\Omega}^{\overline{a}
      \left(\eta_{\lambda}\right)}|+|\Sigma_{\Omega}^{\overline{a}
      \left(\eta\right)}|-2 |\Sigma_{\Omega}^{\overline{a}
      \left(\eta_{\lambda}\right)}\cap\Sigma_{\Omega}^{\overline{a}
      \left(\eta\right)}|\Big) \, .  
\end{align*}
As $a\in C_h (\rd , \mathbb{C}^{d  \times d})$, $\overline{a} $ is continuous
on $\partial _h \rd$, and this expression tends to $0$, whence $K $ is
also continuous on the corona  $\partial _n \rd $. 
\end{proof}

% \begin{proof}
% Continuity on $\rd$ follows from Proposition \ref{cor:normcont}.
% The symbol $\overline{a}$ is continuous on $h\rd$, so
% $\lim_{x_{\lambda}\to\eta}a\left(x_{\lambda}\right)=\overline{a}\left(\eta\right)$ 
% for $\eta\in\partial_{h}\rd$, and by Lemma \ref{lem:trivtrans-1}
% we obtain $\lim_{x_{\lambda}\to\eta}T_{-x_{\lambda}}a=\overline{a}\left(\eta\right)$.
% Now we can apply Theorem \ref{rkconv-1}. This verifies that $\lim_{x_{\lambda}\to\eta}K\left(x_{\lambda}\right)=K\left(\eta\right)$
% for $x_{\lambda}\in\rd$, i.e., for nots with values in a dense subset
% of $h\rd$. This implies continuity on the whole space. 
% \end{proof}

\subsection{Geometric densities for slowly oscillating symbols.}

In order to obtain values for the critical densities $D_{\mu}^{\pm}\left(S\right)$
we need the averaged traces 
\[
\tr_{\mu}^{-}\left(k\right)=\liminf_{r\to\infty}\inf_{x\in\rd}\frac{1}{\mu\left(B_{R}(x)\right)}\int_{B_{R}\left(x\right)}k\left(z,z\right)dz
\]
% (in a similar way for
and $\tr_{\mu}^{+}\left(k\right)$. %  ) for large $R$
% and {*}suitable (see below){*} measures $\mu$ .

For the comparison of averaged traces we will need the following well-known
fact, whose proof is supplied in the appendix for completeness.

For $f\in C_{b}\left(\rd\right)$ set $\tr^{-}\left(f\right)=\liminf_{r\to\infty}\inf_{y\in\rd}\frac{1}{\abs{B_{r}\left(y\right)}}\int_{B_{r}\left(y\right)}f\left(x\right)\,dx$,
and define $\tr^{+}\left(f\right)$ similarly with $\sup$ instead
of $\inf$. % =\limsup_{r\to\infty}\sup_{y\in\rd}\frac{1}{\abs{B_{r}\left(y\right)}}\int_{B_{r}\left(y\right)}f\left(x\right)\,dx.$ 
\begin{lem}
\label{lem-sphint-1} Assume that $f,g\in C_{b}\left(\rd\right)$
and $\lim_{x\to\infty}|f(x)-g(x)|=0$. Then 
\[
\tr^{-}\left(f\right)=\tr^{-}\left(g\right)\quad\text{ and }\quad\tr^{+}\left(f\right)=\tr^{+}\left(g\right)\,.
\]
\end{lem}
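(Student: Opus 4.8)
The plan is to observe that the averaging functionals $\phi \mapsto |B_r(y)|^{-1}\int_{B_r(y)}\phi\,dx$ are essentially blind to a perturbation that vanishes at infinity, with an error that is uniform in the center $y$ and tends to $0$ as $r \to \infty$. Write $h := f - g \in C_b(\rd)$, put $M := \|h\|_\infty$, and fix $\varepsilon > 0$; by hypothesis there is $R_0 = R_0(\varepsilon)$ with $|h(x)| < \varepsilon$ whenever $|x| > R_0$.

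First I would estimate, for arbitrary $y \in \rd$ and $r > 0$, using that $|B_r(y)| = |B_r|$ does not depend on $y$,
\[
\left| \frac{1}{|B_r(y)|}\int_{B_r(y)} f\,dx - \frac{1}{|B_r(y)|}\int_{B_r(y)} g\,dx \right| \le \frac{1}{|B_r|}\int_{B_r(y)} |h(x)|\,dx \le \frac{M\,|B_{R_0}|}{|B_r|} + \varepsilon,
\]
where the last step splits $B_r(y)$ into the part inside $B_{R_0}(0)$ (measure $\le |B_{R_0}|$, integrand $\le M$) and the part outside it (integrand $< \varepsilon$). Since $|B_r| \to \infty$, there is $r_0 = r_0(\varepsilon)$ such that this bound is $< 2\varepsilon$ for all $r \ge r_0$ and all $y \in \rd$ simultaneously.

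From this uniform two-sided estimate I get, for $r \ge r_0$, that $\inf_{y}\, |B_r(y)|^{-1}\int_{B_r(y)} f \le \inf_{y}\, |B_r(y)|^{-1}\int_{B_r(y)} g + 2\varepsilon$ together with the reverse inequality, and the analogous statements with $\sup_y$ in place of $\inf_y$. Taking $\liminf_{r\to\infty}$ (respectively $\limsup_{r\to\infty}$) and then letting $\varepsilon \to 0$ yields $\tr^-(f) = \tr^-(g)$ and $\tr^+(f) = \tr^+(g)$. There is no genuine obstacle in this argument; the one point worth care is that the error term $M|B_{R_0}|/|B_r|$ must be controlled uniformly in $y$ before passing to the infimum or supremum over $y$, and this uniformity is automatic precisely because all the balls $B_r(y)$ share the same Lebesgue measure $|B_r|$.
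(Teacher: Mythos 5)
Your argument is correct and follows essentially the same route as the paper's: set $h=f-g$, split $B_r(y)$ into the part inside a fixed ball $B_{R_0}(0)$ (small because $|B_r|\to\infty$) and the part outside (small because $h\to 0$ at infinity), note the resulting bound is uniform in $y$, and then pass to $\inf_y$/$\sup_y$ and $\liminf_r$/$\limsup_r$. There is nothing to add.
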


\begin{prop}
If the symbol $a\in\soid$, then the trace of the reproducing kernel
satisfies 
\begin{equation}
\lim_{r\to\infty}\sup_{y\in\rd}\left|\frac{1}{\abs{B_{r}\left(y\right)}}\int_{B_{r}\left(y\right)}\left(k\left(x,x\right)-\frac{\left|B_{1}\right|}{\left(2\pi\right)^{d}}\frac{\Omega^{d/2}}{\det a(x)^{1/2}}\right)dx\right|=0\,.\label{eq:weyllike}
\end{equation}
Equivalently, using the Borel measure $\nu\left(B\right)=\int_{B}\det a(x)^{-1/2}dx$
\begin{equation}
\lim_{r\to\infty}\sup_{y\in\rd}\left|\frac{1}{\nu\left(B_{r}\left(y\right)\right)}\int_{B_{r}\left(y\right)}k\left(x,x\right)dx-\frac{\left|B_{1}\right|}{\left(2\pi\right)^{d}}\Omega^{d/2}\right|=0\,.\label{eq:wl2}
\end{equation}
Consequently, the averaged trace is 
\begin{equation}
\tr_{\nu}^{+}\left(k\right)=\tr_{\nu}^{-}\left(k\right)=\frac{\left|B_{1}\right|}{\left(2\pi\right)^{d}}\Omega^{d/2}\,.\label{eq:limev}
\end{equation}
\end{prop}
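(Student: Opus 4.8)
The plan is to reduce all three displayed identities to the single pointwise fact that $g(x)-\tilde g(x)\to 0$ as $|x|\to\infty$, where $g(x)=k(x,x)=\|k_x\|_2^2$ and $\tilde g(x)=\frac{|B_1|}{(2\pi)^d}\Omega^{d/2}(\det a(x))^{-1/2}$, and then to pass to the averages by an elementary estimate. Both functions are bounded on $\rd$: $g$ by the diagonal bound \eqref{eq:kernelbound} of Proposition~\ref{prop:kernbound}, and $\tilde g$ because uniform ellipticity forces $\theta^d\le\det a(x)\le M_0$ for some constant $M_0$. Both also lie in the algebra $C_h(\rd)$ of slowly oscillating functions: for $g$ this is precisely the last assertion of Proposition~\ref{prop: higs-cont}, and for $\tilde g$ it follows because $\det a$ is a polynomial in the entries $a_{jk}\in C_h^\infty(\rd)$, hence again in $C_h(\rd)$, with range in the compact set $[\theta^d,M_0]$ on which $t\mapsto t^{-1/2}$ is Lipschitz.

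\textbf{Matching boundary values.} The decisive step is to show that the continuous extensions $\overline{g}$ and $\overline{\tilde g}$ to the Higson compactification $h\rd$ coincide on the corona $\partial_h\rd$. Since $x\mapsto a(x)$ extends continuously to $\overline{a}\colon h\rd\to\mathbb{C}^{d\times d}$ and $\det$ and $t\mapsto t^{-1/2}$ are continuous on the relevant sets, one gets $\overline{\tilde g}(\eta)=\frac{|B_1|}{(2\pi)^d}\Omega^{d/2}(\det\overline{a}(\eta))^{-1/2}$ for $\eta\in\partial_h\rd$. For $\overline{g}$ I would invoke Proposition~\ref{prop: higs-cont}: the centered kernel $K(x)=T_{-x}k_x$ extends continuously to $h\rd$ with $K(\eta)=k_0^\eta$, the reproducing kernel of the constant-coefficient space $\PW(H_{\overline{a}(\eta)})$; composing with the continuous map $v\mapsto\|v\|_2^2$ on $L^2(\rd)$ and using $\|K(x)\|_2^2=\|k_x\|_2^2=g(x)$ on $\rd$ gives $\overline{g}(\eta)=\|k_0^\eta\|_2^2=k^\eta(0,0)$. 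By the elementary constant-coefficient computation \eqref{eq:volepps}--\eqref{eq:limkern}, $k^\eta(0,0)=|\Sigma_\Omega^{\overline{a}(\eta)}|/(2\pi)^d=\frac{|B_1|}{(2\pi)^d}\Omega^{d/2}(\det\overline{a}(\eta))^{-1/2}$, so indeed $\overline{g}=\overline{\tilde g}$ on $\partial_h\rd$. Hence $\overline{g-\tilde g}$ vanishes on the corona, and Proposition~\ref{prop:C0kernel} yields $g-\tilde g\in C_0(\rd)$, which is the desired pointwise statement.

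\textbf{Passing to averages.} Put $\varphi=|g-\tilde g|\in C_0(\rd)$, bounded by some $M$. Given $\varepsilon>0$, pick $R_0$ with $\varphi<\varepsilon$ off $B_{R_0}(0)$; then for every $y\in\rd$ and $r>0$, $\int_{B_r(y)}\varphi\le M|B_{R_0}|+\varepsilon|B_r(y)|$, so $|B_r(y)|^{-1}\int_{B_r(y)}\varphi\le M|B_{R_0}|/(|B_1|r^d)+\varepsilon<2\varepsilon$ once $r$ is large, uniformly in $y$. Since $\bigl|\int_{B_r(y)}(g-\tilde g)\bigr|\le\int_{B_r(y)}\varphi$, this is exactly \eqref{eq:weyllike} (the same elementary estimate that underlies Lemma~\ref{lem-sphint-1}). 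For \eqref{eq:wl2} note $\int_{B_r(y)}\tilde g=\frac{|B_1|}{(2\pi)^d}\Omega^{d/2}\nu(B_r(y))$, so the bracket in \eqref{eq:wl2} equals $\nu(B_r(y))^{-1}\int_{B_r(y)}(g-\tilde g)$; since $\nu(B_r(y))\ge M_0^{-1/2}|B_r(y)|$, its absolute value is at most $M_0^{1/2}\,|B_r(y)|^{-1}\int_{B_r(y)}\varphi$, again tending to $0$ uniformly in $y$, and \eqref{eq:limev} then follows at once by taking $\liminf$ and $\limsup$ over $r$. I do not expect a genuine obstacle here: once Proposition~\ref{prop: higs-cont} (hence Theorem~\ref{rkconv-1}) is available, the remaining work is routine; the only point needing a little care is the bookkeeping that identifies $\|k_0^\eta\|_2^2$ with the frozen-coefficient value $\tilde g$, i.e. checking $k^\eta(0,0)=\frac{|B_1|}{(2\pi)^d}\Omega^{d/2}(\det\overline{a}(\eta))^{-1/2}$ and that $(\det a)^{-1/2}$ is slowly oscillating with exactly this extension.
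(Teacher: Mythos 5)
Your proposal is correct and follows essentially the same route as the paper: use Proposition~\ref{prop: higs-cont} to extend the diagonal $k(x,x)$ and $(\det a)^{-1/2}$ to the Higson compactification, check via the constant-coefficient formula \eqref{eq:limkern} that the two extensions agree on the corona, conclude $k(x,x)-\frac{|B_1|}{(2\pi)^d}\Omega^{d/2}(\det a(x))^{-1/2}\in C_0(\rd)$ from Proposition~\ref{prop:C0kernel}, and pass to uniform averages. The only (minor) stylistic difference is that you inline the elementary averaging estimate rather than citing Lemma~\ref{lem-sphint-1}; this is if anything slightly cleaner, since the uniform-in-$y$ statement \eqref{eq:weyllike} really comes out of the \emph{proof} of that lemma rather than its bare statement about $\tr^{\pm}$.
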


\begin{proof}
We apply Lemma~\ref{lem-sphint-1} to the functions $f(x)=k(x,x)$
and $g(x)=\frac{|B_{1}|\Omega^{d/2}}{(2\pi)^{d}}\,\det a(x)^{-1/2}$.
Then $k(x,x)$ is bounded by Proposition~\ref{prop:kernbound} and
continuous by Proposition~\ref{cor:normcont}. Likewise $\det a(x)^{-1/2}$
is bounded and continuous by elliptic regularity. By assumption on
$a$ and Proposition \ref{prop: higs-cont} both functions are in
$\so$ and thus possess the limits $\bar{a}\left(\eta\right)$ and
$\|K\left(\eta\right)\|_{2}^{2}$, in particular for $a$ this means
that 
\[
\lim_{x_{\lambda}\to\eta}\det a(x_{\lambda})^{-1/2}=\det\bar{a}(\eta)^{-1/2}\,.
\]
% For $K$ we use \eqref{eq:sphcontv} and obtain (with
Using the notation of Section~\ref{subsec:Constant-Coefficients} and 
Proposition \ref{prop: higs-cont},  we obtain 
\begin{align*}
\lim_{x_{\lambda}\to\eta}\|k_{x_{\lambda}}\|_{2}^{2} & =\lim_{x_{\lambda}\to\eta}\|T_{-x_{\lambda}}k_{x_{\lambda}}\|_{2}^{2}\\
 & =\|k_{0}^{\eta}\|_{2}^{2}=|\Sigma_{\Omega}^{\bar{a}(\eta)}|\\
 & =\frac{|B_{1}|\Omega^{d/2}}{(2\pi)^{d}}\,\det\bar{a}(\eta)^{-1/2}\,.
\end{align*}
We conclude that both $f$ and $g$ have the same limit function,
and therefore $f-g\in C_{0}(\rd)$ by means of Proposition \ref{prop:C0kernel}.
% \ref{}~\cite{??}. More precisely, since the convergence is uniform by
% Lemma\ref{prop:sphcont-1}(ii), for $\epsilon >0$ there exists
% $r_0=r(\epsilon )$, such that
% $$
% \Big\Vert k(x,x) - \det a(x)^{-1/2} \Big\Vert < \epsilon \qquad \text{
%   for } |x| > r_0
% $$
% and thus $\lim _{x\to \infty }  f(x) - g(x) =
% 0$. 
Lemma~\ref{lem-sphint-1} now yields \eqref{eq:weyllike}. Equation
\eqref{eq:wl2} follows after multiplying with $|B_{r}(y)|/\nu(B_{r}(y))$
and taking limits. Finally, \eqref{eq:limev} is a direct consequence
of \ref{eq:wl2}. %   Since $\lim_{\sigma x\to\eta}k(x,x)=k^{\eta}\left(0,0\right)=\frac{\left|B_{1}\right|}{\left(2\pi\right)^{d}}\frac{\Omega^{d/2}}{\det\left(a(x)\right)^{1/2}}$,
% by Proposition \ref{prop:sph-cont-kern} and by \eqref{eq:limkern},
% the expression
% \[
% k\left(x,x\right)-\frac{\left|B_{1}\right|}{\left(2\pi\right)^{d}}\frac{\Omega^{d/2}}{\det\left(a(x)\right)^{1/2}}
% \]
% is continuous and bounded in $x$ and vanishes at infinity \textemdash{}
% observe that $\det\left(a(x)\right)^{-1/2}\in C_{\sigma}\text{(\ensuremath{\rd}})$
% \textemdash{} and Lemma \ref{lem-sphint-1} can be applied. This gives
% \eqref{eq:weyllike}. Equation \eqref{eq:wl2} follows immediately
% by substituting the definition of $\nu$ and a division. Equation
% \eqref{eq:limev} follows by an application of Lemma \ref{lem-sphint-1}
% (ii).
\end{proof}
% \begin{rem*}
% Observe the equality of the value of the \emph{spatially} averaged
% trace \ref{eq:limev} to the well known Weyl formula describing the
% \emph{spectrally }averaged trace of the projection operator $\specproj{H_{a}^{D}}$
% of an elliptic operator (here $H_{a}^{D}$is the operator with Dirichlet
% boundary conditions) \cite{hoermander68} . 
% \end{rem*}
Equation \eqref{eq:wl2} allows us to state our main result on geometric
Beurling densities for operators with slowly oscillating symbols
in a simple form. In order to do so we need an elementary result on
the relation between density and a change of measure. 
\begin{lem}
\label{lem:changemeas} Let $d\mu=hdx$ for a positive, continuous
function $h$ on $\rd$, bounded above and below, $0<c<h\left(z\right)<C$
for all $z\in\rd$. Then the dimension free density condition 
\[
D_{0}^{-}\left(S\right)=\liminf_{r\to\infty}\inf_{x\in\rd}\frac{\#\left(S\cap B_{r}\left(x\right)\right)}{\int_{B_{r}\left(x\right)}k\left(y,y\right)dy}\geq1
\]
holds, if and only if 
\[
D_{\mu}^{-}\left(S\right)\geq\tr_{\mu}^{-}\left(k\right).
\]
Similarly 
\[
D_{0}^{+}\left(S\right)\leq1\quad\text{if and only if }\quad D_{\mu}^{+}\left(S\right)\leq\tr_{\mu}^{-}\left(k\right)\,.
\]
\end{lem}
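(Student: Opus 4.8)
The plan is to compare the two measures $d\mu=h\,dx$ and $d\mu_0(y)=k(y,y)\,dy$ underlying $D_\mu^{\pm}(S)$ and $D_0^{\pm}(S)$ through their ratio on balls. Set
\[
a_r(x)\;=\;\frac{1}{\mu(B_r(x))}\int_{B_r(x)}k(y,y)\,dy,
\]
the $\mu$-averaged trace of $k$ over $B_r(x)$, so that by definition $\tr_\mu^-(k)=\liminf_{r\to\infty}\inf_{x\in\rd}a_r(x)$ and $\tr_\mu^+(k)=\limsup_{r\to\infty}\sup_{x\in\rd}a_r(x)$. Since $0<c\le h\le C$ and, by Proposition~\ref{prop:kernbound}, $0<c'\le k(y,y)=\|k_y\|_2^2\le C'$, the numbers $a_r(x)$ lie in a fixed compact subinterval of $(0,\infty)$ uniformly in $r$ and $x$; in particular $0<\tr_\mu^-(k)\le\tr_\mu^+(k)<\infty$, so division by these quantities is harmless.

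The starting point is the pointwise identity
\[
\frac{\#(S\cap B_r(x))}{\int_{B_r(x)}k(y,y)\,dy}\;=\;\frac{1}{a_r(x)}\cdot\frac{\#(S\cap B_r(x))}{\mu(B_r(x))},
\]
valid for all $x$ and $r$. Bounding $a_r(x)^{-1}$ between $(\sup_{x'}a_r(x'))^{-1}$ and $(\inf_{x'}a_r(x'))^{-1}$, then taking $\inf_x$ (resp. $\sup_x$) and $\liminf_r$ (resp. $\limsup_r$), and using the elementary facts that $(\liminf f)(\liminf g)\le\liminf(fg)\le(\limsup f)(\liminf g)$ together with the analogues $(\liminf f)(\limsup g)\le\limsup(fg)\le(\limsup f)(\limsup g)$ for nonnegative $f,g$, one arrives at the unconditional two-sided estimates
\[
\frac{D_\mu^-(S)}{\tr_\mu^+(k)}\;\le\;D_0^-(S)\;\le\;\frac{D_\mu^-(S)}{\tr_\mu^-(k)},
\qquad
\frac{D_\mu^+(S)}{\tr_\mu^+(k)}\;\le\;D_0^+(S)\;\le\;\frac{D_\mu^+(S)}{\tr_\mu^-(k)}.
\]

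The one genuine obstacle is that the lower bounds on $D_0^-(S)$ and $D_0^+(S)$ necessarily carry $\tr_\mu^+(k)$ rather than $\tr_\mu^-(k)$ — a $\liminf$ of a product is in general strictly larger than the product of the $\liminf$s — so the stated equivalences can only hold when the averaged trace is a genuine limit, $\tr_\mu^-(k)=\tr_\mu^+(k)=:\tr_\mu(k)$. In the setting where the lemma is applied this is precisely \eqref{eq:limev} (indeed the uniform form \eqref{eq:wl2}) of the preceding proposition. I would therefore record this equality; the four displayed inequalities then collapse to $D_0^-(S)=D_\mu^-(S)/\tr_\mu(k)$ and $D_0^+(S)=D_\mu^+(S)/\tr_\mu(k)$, whence $D_0^-(S)\ge1\iff D_\mu^-(S)\ge\tr_\mu(k)=\tr_\mu^-(k)$ and $D_0^+(S)\le1\iff D_\mu^+(S)\le\tr_\mu(k)=\tr_\mu^-(k)$, which is the claim. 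For completeness I would also spell out the short $\varepsilon$--$\delta$ version of the one-sided implications (for instance, $D_0^-(S)\ge1$ forces $\#(S\cap B_r(x))\ge(1-\delta)\big(\inf_{x'}a_r(x')\big)\mu(B_r(x))$ for all $x$ and all large $r$, hence $D_\mu^-(S)\ge(1-\delta)\tr_\mu^-(k)$, and then $\delta\to0$), observing that this half uses no information about $\tr_\mu^+(k)$ and that the equality $\tr_\mu^-(k)=\tr_\mu^+(k)$ enters only in the reverse implications.
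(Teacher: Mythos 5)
Your proof is correct and rests on the same core computation as the paper's: comparing the two density quotients through the averaged trace $a_{r}(x)=\mu(B_{r}(x))^{-1}\int_{B_{r}(x)}k(y,y)\,dy$ and passing the resulting pointwise inequalities through $\inf_{x}$ and $\liminf_{r}$. The direction $D_{0}^{-}(S)\geq1\Rightarrow D_{\mu}^{-}(S)\geq\tr_{\mu}^{-}(k)$ (the only one used in the proof of Theorem~\ref{mgen}) is exactly the paper's $\varepsilon$-argument. Your caveat about the converse is well taken: the paper disposes of it with ``the converse is obtained by reading the argument backwards,'' but, as you observe, passing from $D_{\mu}^{-}(S)\geq\tr_{\mu}^{-}(k)$ back to a pointwise lower bound against $a_{r}(x)$ inevitably produces $\tr_{\mu}^{+}(k)$ in the denominator, so the stated equivalence (and likewise the $D^{+}$ statement, where the superscript on $\tr_{\mu}$ appears to be a typo) really does require $\tr_{\mu}^{-}(k)=\tr_{\mu}^{+}(k)$. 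In the intended application this is guaranteed by \eqref{eq:wl2}--\eqref{eq:limev}, so your version, which records this hypothesis explicitly, is the more defensible formulation.
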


% This is simply a matter of rewriting: inequality \eqref{eq:5a-1-1-1}

\begin{proof}
The inequality $D_{0}^{-}(S)\geq1$ means that for all $\varepsilon>0$
there is an $r_{\epsilon}>0$ such that for all $r>r_{\varepsilon}$
\begin{equation}
\#\left(S\cap B_{r}\left(x\right)\right)\geq\left(1-\varepsilon\right)\int_{B_{r}\left(x\right)}k\left(y,y\right)dy\,,\label{eq:denseps}
\end{equation}
or equivalently, 
\[
\frac{\#\left(S\cap B_{r}\left(x\right)\right)}{\mu\left(B_{r}\left(x\right)\right)}\geq\left(1-\varepsilon\right)\frac{1}{\mu\left(B_{r}\left(x\right)\right)}\int_{B_{r}\left(x\right)}k\left(y,y\right)dy\,.
\]
Written in terms of the Beurling density, this is %which in turn can be rewritten as
\[
D_{\mu}^{-}\left(S\right)\geq\liminf_{r\to\infty}\inf_{x\in\rd}\frac{\int_{B_{r}\left(x\right)}k\left(y,y\right)dy}{\mu(B_{r}(x))}=\mathrm{tr}_{\mu}^{-}(k)\,.
\]
The converse is obtained by reading the argument backwards. 
\end{proof}
% See Appendix \ref{subsec:On-Beurling-densities} for more discussion
% \textcolor{green}{(skip this?)}.
As a direct consequence we obtain the main result on geometric Beurling
densities for uniformly elliptic operators with slowly oscillating
symbols. This is Theorem \nameref{tm3} of the introduction. 
\begin{thm}
\label{mgen} Assume $H_{a}=-\sum_{j,k=1}^{d}\partial_{j}a_{jk}\partial_{k}$
is uniformly elliptic with symbol $a\in\soid$. Let $\PW\left(H_{a}\right)=\specproj{H_{a}}\lrd$
be the corresponding Paley-Wiener space and set $d\nu(x)=\det\left(a(x)\right)^{-1/2}dx$.

If $S\subseteq\rd$ is a set of stable sampling for $\PW\left(H_{a}\right)$
then 
\begin{equation}
D_{\nu}^{-}\left(S\right)\geq\frac{\left|B_{1}\right|}{\left(2\pi\right)^{d}}\Omega^{d/2}.\label{eq:ld}
\end{equation}
If $S\subseteq\rd$ is a set of interpolation for $\PW\left(H_{a}\right)$,
then 
\begin{equation}
D_{\nu}^{+}\left(S\right)\leq\frac{\left|B_{1}\right|}{\left(2\pi\right)^{d}}\Omega^{d/2}\,.\label{eq:ud}
\end{equation}
\end{thm}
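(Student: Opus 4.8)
The plan is to obtain Theorem~\ref{mgen} by combining three facts already established in the excerpt: the dimension-free density theorem for $\PW(H_a)$, the change-of-measure lemma, and the explicit value of the averaged trace of the reproducing kernel.

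First I would apply Corollary~\ref{main} (equivalently, Theorem~\ref{mainabstr}). Since $a\in\soid$, Lemma~\ref{prop:Arzela} together with Lemma~\ref{prop:constlim} shows that every limit operator $H_b$ has constant coefficients and is therefore similar to the Laplacian, so its spectrum is purely absolutely continuous; in particular $\Omega\notin\sigma_p(H_b)$. Hence the hypotheses of Theorem~\ref{mainabstr} are met, and for a set of stable sampling $S$ we get $D_0^-(S)\geq1$, while for a set of interpolation $S$ we get $D_0^+(S)\leq1$, where $D_0^\pm$ is the Beurling density with respect to $d\mu(x)=k(x,x)\,dx$.

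Next I would pass from the dimension-free density to the geometric density $D_\nu^\pm$ via Lemma~\ref{lem:changemeas}. For this one must check that $d\nu=h\,dx$ with $h(x)=\det(a(x))^{-1/2}$ is an admissible measure, i.e.\ $h$ is continuous and bounded above and below. Continuity (indeed smoothness) is immediate from $a\in\cbinfd$; the uniform lower bound for $h$ follows from the ellipticity bound $a(x)\xi\cdot\xi\geq\theta|\xi|^2$, and the uniform upper bound from the boundedness of the entries of $a$. So Lemma~\ref{lem:changemeas} applies and tells us that $D_0^-(S)\geq1$ is equivalent to $D_\nu^-(S)\geq\tr_\nu^-(k)$, and $D_0^+(S)\leq1$ is equivalent to $D_\nu^+(S)\leq\tr_\nu^-(k)$.

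Finally I would substitute the value of the averaged trace: equation~\eqref{eq:limev} gives $\tr_\nu^+(k)=\tr_\nu^-(k)=|B_1|(2\pi)^{-d}\Omega^{d/2}$, and the two equivalences above become precisely \eqref{eq:ld} and \eqref{eq:ud}. The substantive content has of course already been discharged upstream, namely the identification of the trace: Proposition~\ref{prop: higs-cont} extends the centered kernels $T_{-x}k_x$ continuously to the Higson compactification $h\rd$, so that $k(x,x)=\|k_x\|_2^2$ is slowly oscillating with boundary values $|\Sigma_\Omega^{\bar a(\eta)}|/(2\pi)^d = |B_1|(2\pi)^{-d}\Omega^{d/2}\det\bar a(\eta)^{-1/2}$, and Lemma~\ref{lem-sphint-1} then lets one replace $k(x,x)$ by the explicit function $|B_1|(2\pi)^{-d}\Omega^{d/2}\det a(x)^{-1/2}$ when computing averaged traces. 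Given all of that, the remaining argument for the theorem itself is pure bookkeeping; the only point genuinely requiring attention is the verification that $d\nu$ satisfies the hypotheses of Lemma~\ref{lem:changemeas}, which is exactly where uniform ellipticity enters.
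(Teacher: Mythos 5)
Your proposal is correct and follows exactly the paper's own argument: apply Corollary~\ref{main} to get $D_0^{\pm}(S)\gtrless 1$, convert to the geometric density via Lemma~\ref{lem:changemeas}, and substitute the averaged trace from \eqref{eq:limev}. Your explicit verification that $h(x)=\det(a(x))^{-1/2}$ is bounded above and below (via uniform ellipticity and boundedness of $a$) is a point the paper leaves implicit, and is a worthwhile addition.
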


\begin{proof}
We only verify the first assertion. By Theorem \ref{main}, if $S$
is a set of stable sampling, then $D_{0}^{-}\left(S\right)\geq1.$
By Lemma \ref{lem:changemeas} this is equivalent to 
\[
D_{\nu}^{-}\left(S\right)\geq\tr_{\nu}^{-}\left(k\right)\,.
\]
The averaged trace $\mathrm{tr}_{\nu}^{-}\,(k)$ was computed in~\eqref{eq:limev}
to be $\frac{\left|B_{1}\right|}{\left(2\pi\right)^{d}}\Omega^{d/2}$.
%  as $\det\left(a(x)\right)^{-1/2}$ is bounded and bounded away from
% zero, the result follows with Equation 
\end{proof}
\begin{example}
  We consider some special cases of Theorem~\ref{mgen}.

  (i) \emph{Asymptotically constant symbols.} Assume that
$a\in\cbinfd$ and that $\lim_{x\to\infty}a(x)=b$.  Then it is straightforward
to verify that $a\in\soid$ and that  $D_\nu ^{\pm} (S) = (\det b)
^{1/2} D^{\pm}(S)$.  Thus we may use the original Beurling density,
and Theorem~\ref{mgen} implies that  a sampling set $S\subseteq\rd$ 
for $PW_{\Omega}(H_{a})$ must have density 
$$
D^{-}(S)\geq\frac{\abs{\Sigma_{\Omega}^{b}}}{\left(2\pi\right)^{d}}
=\frac{\abs{B_{1}}}{\left(2\pi\right)^{d}}\det(b^{-1/2})\,\Omega^{d/2}\, , 
$$ and a set of interpolation $S$  in $\PW$$\left(H_{a}\right)$ 
% Likewise, if $S\subseteq\rd$ 
must satisfy
$
D^{+}(S)\leq\frac{\abs{\Sigma_{\Omega}^{b}}}{\left(2\pi\right)^{d}}=\frac{\abs{B_{1}}}{\left(2\pi\right)^{d}}\det(b^{-1/2})\,\Omega^{d/2}$. 
This is Theorem \nameref{tm2} of the introduction. As was to be
expected,  this coincides with the critical density for the classical
Paley Wiener space $\PW(H_{b})$ (cf. \cite{landau67}).   
% \end{example*}
% \begin{example}

(ii) \emph{Symbols with radial limits.} Let us consider the class of symbols that possess radial limits at
$\infty$. We say that $a\in\cbinfd$ is spherically continuous, if it
possesses radial limits in the following sense.   There
exists a continuous matrix function $b \in C(S^{d-1}, \bC ^{d\times d}
  )$, such that
$$\lim _{r\to \infty} \sup _{\eta \in S^{d-1}} \|a(r\eta ) - b(\eta
)\|_X = 0 \, .
$$ 
A $3\epsilon$-argument shows that these symbols are slowly
oscillating.  Consequently Theorem \ref{mgen} holds for spherically continuous
symbols in $\cbinfd.$ Spherically continuous symbols are related to
another compactification, the spherical compactification with corona
$S^{d-1}$. In contrast to the Higson compactification, it is
metrizable, but it is much smaller. See~\cite{Cordes79} for its use in
partial differential equations. 
% Again  Theorem \ref{mgen} is applicable. 
% The mapping $\sigma(x)=\frac{x}{1+\abs x}$ defines a homeomorphism
% from $\rd$ to the open unit ball $B_{1}=B_{1}(0)$. We say that $f\in C(\rd,X)$
% is spherically continuous if there exists a function $g\in C\left(\ensuremath{\overline{B_{1}}},X\right)$
% such that $f=g\circ\sigma$. It is straightforward to verify that
% this is equivalent to the existence a function $f_{\infty}\in C(S^{d-1},X)$
% such that for all $\eta\in S^{d-1}$ and for all sequences $\left(x_{n}\right)_{n\in\n}\subset\rd$
% with $\lim_{n\to\infty}\sigma(x_{n})=\eta$ 
% \[
% \lim_{\sigma(x_{n})\to\eta}f\left(x_{n}\right)=f_{\infty}\left(\eta\right)\,.
% \]
% \begin{claim*}
% \label{cor: sphinslow} Every spherically continuous function is slowly
% oscillating. 
% \end{claim*}
% \begin{proof}
% For $f$ spherically continuous and $m$ in a given compact set $M\subset\rd$,
% we compute $\left\Vert f(x+m)-f(x)\right\Vert _{X}$. By definition
% $f=g\circ\sigma$ for $g\in C(\overline{B_{1}},X),$ so $f(x+m)-f(x)=g(\sigma(x+m))-g(\sigma(x))$.
% For $m\in M$ and $\abs x\to\infty$ it is plain that $\sup_{m\in M}\abs{\sigma(x+m)-\sigma(x)}\to0$.
% The continuity of $g$ implies that 
% \[
% \lim_{\abs x\to\infty}\sup_{m\in M}\left\Vert f(x+m)-f(x)\right\Vert _{X}=0\,.
% \]
% \end{proof}
% Consequently Theorem \ref{mgen} holds for spherically continuous
% symbols in $\cbinfd.$ 
\end{example}

\subsection{Variable bandwith in dimension $d=1$}

Let $H_{a}$ be the differential operator $H_{a}f=-\frac{d}{dx}(a\frac{d}{dx}f)$
on $L^{2}(\bR)$. This is a Sturm-Liouville operator on $\bR$, and
the ellipticity assumption amounts to the conditions $\inf_{x\in\bR}a(x)>0$
and $a\in\cbinf$. In \cite{cpam} we argued that the spectral subspaces
of $H_{a}$ can be interpreted as spaces of locally variable bandwidth.
Intuitively, the quantity $a(x)^{-1/2}$ is a measure for the bandwidth
in a neighborhood of $x$. %This intuition can be made rigorous. 

We apply Theorem~\ref{mgen} to $H_{a}$. The relevant measure is
$d\nu(x)=a^{-1/2}(x)dx$, and $\nu(I)=\int_{I}a(x)^{-1/2}\,dx$ for
$I\subseteq\bR$. %In the following $I$ is a bounded interval. 
Then we have the following necessary density condition for functions
of variable bandwidth (Corollary \nameref{cor1} of the introduction). 
\begin{cor}
\label{corvbw} Assume that $a\in C_{b}^{\infty}(\bR)$ and $\lim_{x\to\pm\infty}a'(x)=0$
. Let $\PW(H_{a})$ be the Paley-Wiener space associated to $H_{a}$.

(i) If $S$ is a sampling set for $\PW(H_{a})$, then 
\begin{equation}
D_{\nu}^{-}(S)=\liminf_{r\to\infty}\inf_{x\in\bR}\frac{\#(S\cap[x-r,x+r])}{\nu([x-r,x+r])}\geq\frac{\Omega^{1/2}}{\pi}\,.\label{eq:1dud}
\end{equation}
(ii) If $S$ is a set of interpolation for $\PW(H_{a})$, then $D_{\nu}^{+}(S)\leq\Omega^{1/2}/\pi$.
% \[
% D_{\nu}^{+}(S)=\limsup_{r\to\infty}\sup_{x\in\bR}\frac{\#(S\cap[x-r,x+r])}{\nu([x-r,x+r])}\leq\frac{\Omega^{1/2}}{\pi}\,.
% \]
\end{cor}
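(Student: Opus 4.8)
The plan is to read off Corollary~\ref{corvbw} as the one-dimensional special case of Theorem~\ref{mgen}: once the hypotheses are matched and one numerical constant is evaluated, there is nothing left to prove.

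First I would verify that $H_a$ falls under Theorem~\ref{mgen}. In dimension $d=1$ the matrix symbol $a$ is simply a positive scalar function, and the standing hypotheses of this subsection---$a\in\cbinf$ and $\inf_{x\in\bR}a(x)>0$---are precisely the positive definiteness and uniform ellipticity required there. The remaining hypothesis of Theorem~\ref{mgen} is $a\in\soid$, which for $d=1$ means that $a$ is a scalar slowly oscillating function. By Lemma~\ref{lem:charsoi} this is equivalent to $a\in\cbinf$ together with $\lim_{|x|\to\infty}a'(x)=0$; since $|x|\to\infty$ in $\bR$ just means $x\to\pm\infty$, this is exactly the hypothesis of the corollary. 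Hence $H_a$ is a uniformly elliptic operator with slowly oscillating symbol, and Theorem~\ref{mgen} applies.

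It remains to specialize the conclusion. For a scalar symbol $\det a(x)=a(x)$, so the measure $d\nu(x)=\det(a(x))^{-1/2}\,dx$ appearing in Theorem~\ref{mgen} is the measure $d\nu(x)=a(x)^{-1/2}\,dx$ of the corollary; and in $\bR$ the Euclidean ball $B_r(x)$ is the interval $(x-r,x+r)$, so the Beurling densities $D_\nu^\pm(S)$ coincide with those in the corollary (passing to the closed interval $[x-r,x+r]$ alters the count only for a Lebesgue-null set of radii $r$ and does not affect the $\liminf$ or $\limsup$). Finally, for $d=1$ the unit ball is $(-1,1)$, so $|B_1|=2$ and $(2\pi)^d=2\pi$, and therefore
\[
\frac{|B_1|}{(2\pi)^d}\,\Omega^{d/2}=\frac{2}{2\pi}\,\Omega^{1/2}=\frac{\Omega^{1/2}}{\pi}\,.
\]
Inserting this value into \eqref{eq:ld} gives $D_\nu^-(S)\geq\Omega^{1/2}/\pi$ for a sampling set $S$, which is \eqref{eq:1dud}, and inserting it into \eqref{eq:ud} gives $D_\nu^+(S)\leq\Omega^{1/2}/\pi$ for a set of interpolation, which is part~(ii). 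There is no genuine obstacle in this argument; the only step needing a moment's attention is the passage, via Lemma~\ref{lem:charsoi}, from the pointwise decay condition on $a'$ to membership of $a$ in the class of slowly oscillating symbols.
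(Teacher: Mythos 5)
Your proof is correct and follows essentially the same route the paper takes: Corollary~\ref{corvbw} is exactly the $d=1$ specialization of Theorem~\ref{mgen}, obtained by invoking Lemma~\ref{lem:charsoi} to translate the decay of $a'$ into membership in $\soid$, identifying $\det a = a$ so that $d\nu = a^{-1/2}\,dx$, and evaluating $|B_1|/(2\pi)^d = 2/(2\pi) = 1/\pi$, so that the critical density becomes $\Omega^{1/2}/\pi$.
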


Arguing as in Lemma~\ref{lem:changemeas}, equation~\eqref{eq:1dud}
says that for $\epsilon>0$ and $R$ large enough we have %for every interval $I$ with $|I|>R$ 
% \[
% \frac{\#(S\cap[x-r,x+r])}{\nu([x-r,x+r])}\geq\frac{\Omega^{1/2}}{\pi}-\epsilon
% \]
% or equivalently, 
\begin{equation}
\#(S\cap[x-r,x+r])\geq\Big(\frac{\Omega^{1/2}}{\pi}-\epsilon\Big)\int_{x-r}^{x+r}a(y)^{-1/2}\,dy\,.\label{eq:c18}
\end{equation}
Thus the number of samples in an interval $[x-r,x+r]$ is determined
by $a(x)^{-1/2}$, which is in line with our interpretation of $a^{-1/2}$
as the local bandwidth.

Corollary~\ref{corvbw} is precisely the formulation of the necessary
density conditions in \cite{cpam}. However, the main result of \cite{cpam}
was proved under the restrictive assumption that $a$ is constant
outside an interval $[-R,R]$. The proof in \cite{cpam} dwelt heavily
on the scattering theory of one-dimensional Schrödinger operators.
The method of this paper yields a significantly more general result
with a completely different method of proof. Corollary~\ref{corvbw}
was our dream that motivated this work.

Finally we remark that the density conditions of Theorem~\ref{mgen}
suggest that the Paley-Wiener spaces $\PW(H_{a})$ associated to a
uniformly elliptic differential operator may be taken as an appropriate
generalization of variable bandwidth to higher dimensions.

\section{Outlook\label{sec:Outlook}}

We have proved necessary density conditions for sampling and interpolation
in spectral subspaces of uniformly elliptic partial differential operators
with slowly oscillating coefficients. These spectral subspaces may
be taken as a suitable generalization of the notion of variable bandwidth
to higher dimensions. The emphasis has been on a new method that combines
elements from limit operators, regularity theory and heat kernel estimates,
and the use of compactifications.

Clearly one can envision manifold extensions of our results and methods.
Theorem~\ref{mainabstr} is stated for a significantly larger class
of operators and symbols. For instance, it could be applied to higher
order partial differential operators or to Schrödinger operators and
to symbols with less smoothness or to almost periodic symbols. However,
the spectral theory of such operators is more involved and one needs
to find conditions that prevent their limit operators from having
a point spectrum at the ends of the spectral interval. As these questions
belong to spectral theory rather than sampling theory, we plan to
pursue them in a separate publication.

In a different direction one may consider the graph Laplacian on an
infinite graph or even a metric measure space endowed with a kernel
that satisfies Gaussian estimates~\cite{MR2970038}. While many steps
of our proofs remain in place, this set-up opens numerous new questions.

Finally several hidden connections beg to be explored. The identity
\eqref{eq:weyllike} resembles the famous Weyl formula for the asymptotic
density of eigenvalues in a spectral interval~\cite{hoermander68}.
This observation invites the comparison of the Beurling density with
the density of states in spectral theory. We plan to investigate some
of these issues in future work.

\appendix
%dummy comment inserted by tex2lyx to ensure that this paragraph is not empty%dummy comment inserted by tex2lyx to ensure that this paragraph is not empty

\section{Averaged traces}

For completeness we provide the proof of Lemma~\ref{lem-sphint-1}.
Recall that %$f\in C_{b}\left(\rd\right)$ set
$\tr^{-}\left(f\right)=$ \\
 $\liminf_{r\to\infty}\inf_{y\in\rd}\frac{1}{\abs{B_{r}\left(y\right)}}\int_{B_{r}\left(y\right)}f\left(x\right)\,dx$.
% ,
% and similarly $\tr^{+}\left(f\right)=\limsup_{r\to\infty}\sup_{y\in\rd}\frac{1}{\abs{B_{r}\left(y\right)}}\int_{B_{r}\left(y\right)}f\left(x\right)\,dx.$ 

% The following is definitely known.
%\begin{lem}
%\label{lem-sphint-1a}% (i ) If $h\in C_{b}(\rd)$ and $\lim_{\abs x\to\infty}|h(x)|=0$,
% then 
% \[
% \lim_{r\to\infty}\sup_{y\in\rd}\frac{1}{\abs{B_{r}\left(y\right)}}\int_{B_{r}\left(y\right)}\abs{h(x)}\,dx=0.
% \]
%  (ii) 
\emph{If $f,g\in C_{b}\left(\rd\right)$ and $\lim_{\abs x\to\infty}|f(x)-g(x)|=0$,
then} 
\[
\tr^{-}\left(f\right)=\tr^{-}\left(g\right)\quad\text{and}\quad\tr^{+}\left(f\right)=\tr^{+}\left(g\right)\,.
\]
%\end{lem}
\begin{proof}
Set $h=f-g$, then $\lim_{x\to\infty}h(x)=0$. We split the relevant
averages as % Choose $\varepsilon>0$ . For every $R>0$
\[
\frac{1}{\abs{B_{r}\left(y\right)}}\int_{B_{r}\left(y\right)}\abs{h\left(x\right)}dx=\frac{1}{\abs{B_{r}\left(y\right)}}\left[\int_{B_{r}\left(y\right)\cap B_{R}^{c}\left(0\right)}+\int_{B_{r}\left(y\right)\cap B_{R}\left(0\right)}\right]\abs{h\left(x\right)}dx=(I)+(II)\,.
\]
Given $\epsilon>0$, there exists an $R_{\varepsilon}>0$ such that
$\sup_{|x|\ge R_{\epsilon}}\abs{h\left(x\right)}<\varepsilon/2$.
So, 
\[
(I)<\frac{\abs{B_{r}\left(y\right)\cap B_{R_{\varepsilon}}^{c}\left(0\right)}}{\abs{B_{r}\left(y\right)}}\,\frac{\varepsilon}{2}<\frac{\varepsilon}{2}\,,
\]
independent of $y$. For the second term observe that 
\[
(II)<\norm[\infty]{h}\frac{\abs{B_{r}\left(y\right)\cap B_{R_{\varepsilon}}\left(0\right)}}{\abs{B_{r}\left(y\right)}}\leq\norm[\infty]{h}\frac{|B_{R_{\epsilon}}|}{|B_{r}(y)|}<\varepsilon/2
\]
for $r>\left(\frac{2}{\varepsilon}\norm[\infty]{h}\right)^{1/d}R_{\varepsilon}$.

Since %(ii) We sketch the argument for $\tr^{-}$ .As
\[
\frac{1}{\abs{B_{r}\left(y\right)}}\int_{B_{r}\left(y\right)}f=\frac{1}{\abs{B_{r}\left(y\right)}}\int_{B_{r}\left(y\right)}g+\frac{1}{\abs{B_{r}\left(y\right)}}\int_{B_{r}\left(y\right)}\left(f-g\right)\,,
\]
it follows that 
\[
\inf_{y\in\rd}\frac{1}{\abs{B_{r}\left(y\right)}}\int_{B_{r}\left(y\right)}f\leq\inf_{y\in\rd}\frac{1}{\abs{B_{r}\left(y\right)}}\int_{B_{r}\left(y\right)}g+\sup_{y\in\rd}\frac{1}{\abs{B_{r}\left(y\right)}}\int_{B_{r}\left(y\right)}\abs{f-g}\,,
\]
and therefore 
\begin{align*}
\tr^{-}\left(f\right) & \leq\liminf_{r\to\infty}\inf_{y\in\rd}\frac{1}{\abs{B_{r}\left(y\right)}}\int_{B_{r}\left(y\right)}g+\lim_{r\to\infty}\sup_{y\in\rd}\frac{1}{\abs{B_{r}\left(y\right)}}\int_{B_{r}\left(y\right)}\abs{f-g}=\tr^{-}\left(g\right)\,.
\end{align*}
Interchanging $f$ and $g$ yields equality. The equality $\mathrm{tr}^{+}(f)=\mathrm{tr}^{+}(g)$
is proved in the same way. 
\end{proof}

\section{The lower bound for the reproducing kernel}

We verify the lower bound for $\norm{k_{x}}$ in the proof of Proposition
\ref{prop:kernbound}. This fact is proved in \cite[Lemma 3.19 (a)]{MR2970038}.
For completeness we reproduce that proof with some necessary modifications
and adjustments. The idea is to relate the reproducing kernel to the
heat kernel of $e^{-tH_{a}}$ via functional calculus.

% The idea is to estimate the reproducing kernel by the heat kernel
% and get control of the tail of the latter (by using exponential decay).

%We gather the necessary background.

% In this section let us
We write $k^{\Omega}$ for the reproducing kernel of $\PW$ $\left(H_{a}\right)$.
For a bounded, non-negative Borel function $F\geq0$ with support
in $[0,\Omega]$ we define $k_{x}^{F}=F\left(H_{a}\right)k_{x}^{\Omega}$
and the corresponding integral kernel $k^{F}\left(x,y\right)\coloneqq F\left(H_{a}\right)(x,y):=k_{x}^{F}\left(y\right)$.
The last expression is well defined, as $k_{x}^{F}\in\PW$$\left(H_{a}\right)$.
The kernel $k^{F}\left(x,y\right)$ is symmetric, because $F(H_{a})$
is self-adjoint: 
\[
k_{x}^{F}\left(y\right)=\inprod{F\left(H_{a}\right)k_{x}^{\Omega},k_{y}^{\Omega}}=\inprod{k_{x}^{\Omega},F\left(H_{a}\right)k_{y}^{\Omega}}=\overline{\inprod{F\left(H_{a}\right)k_{y}^{\Omega},k_{x}^{\Omega}}}=\overline{k_{y}^{F}\left(x\right)}\,.
\]
Consequently, $F(H_{a})$ is an integral operator. For $f\in\Lii$
\[
F\left(H_{a}\right)f\left(x\right)=\inprod{F\left(H_{a}\right)f,k_{x}^{\Omega}}=\inprod{f,F\left(H_{a}\right)k_{x}^{\Omega}}=\inprod{f,k_{x}^{F}}=\int_{\rd}k^{F}\left(y,x\right)f(y)dy\,.
\]
If $0\leq G\leq F$, then 
\begin{equation}
0\leq k^{G}\left(x,x\right)\leq k^{F}\left(x,x\right)\label{eq:c16}
\end{equation}
for all $x\in\rd$. For the proof observe that $F-G\geq0$ implies
% $\inprod{\left(F-G\right)\left(H_{a}\right)f,f}\geq0$
% for all $f\in\Lii$; for $f=\kl$it follows
that 
\[
k^{F}(x,x)=\inprod{F\left(H_{a}\right)k_{x}^{\Omega},k_{x}^{\Omega}}\geq\inprod{G\left(H_{a}\right)k_{x}^{\Omega},k_{x}^{\Omega}}=k^{G}(x,x)\,.
\]

The heat operator $e^{-tH_{a}}$ is bounded and has a kernel $p_{t}\left(x,y\right)$
that satisfies \emph{on diagonal estimates.} There are positive constants
$c,C$ such that for all $x\in\rd$, and $t>0$ 
\[
ct^{-d/2}\leq p_{t}\left(x,x\right)\leq Ct^{-d/2}\,.
\]
This is well known, see, e.g. \cite{Ouhabaz2006}. 
\begin{claim*}
$0<c<k^{\Omega}\left(x,x\right)<C$ for all $x\in\rd$. 
\end{claim*}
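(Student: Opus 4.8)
The upper bound was already established in Proposition~\ref{prop:kernbound}; in any case it also follows from the elementary pointwise inequality $\chi_{[0,\Omega]}(\lambda)\le e^{t\Omega}e^{-t\lambda}$, valid for all $\lambda\ge0$ and any $t>0$, which gives the operator inequality $\chi_{[0,\Omega]}(H_{a})\le e^{t\Omega}e^{-tH_{a}}$ and hence $k^{\Omega}(x,x)\le e^{t\Omega}p_{t}(x,x)\le Ce^{t\Omega}t^{-d/2}$ uniformly in $x$ by the on-diagonal heat kernel bound. So the whole point is the lower bound, and the plan is to squeeze $k^{\Omega}(x,x)$ from below by $p_{t}(x,x)$ for a suitably chosen fixed $t$.

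First I would split the heat operator along the spectral projection: for fixed $t>0$ write $e^{-t\lambda}=G_{1}(\lambda)+G_{2}(\lambda)$ with $G_{1}(\lambda)=e^{-t\lambda}\chi_{[0,\Omega]}(\lambda)$ and $G_{2}(\lambda)=e^{-t\lambda}\chi_{(\Omega,\infty)}(\lambda)$. Since $G_{1}$ has support in $[0,\Omega]$ it fits the framework set up above, with $k_{x}^{G_{1}}=G_{1}(H_{a})k_{x}^{\Omega}=e^{-tH_{a}}k_{x}^{\Omega}\in\PW(H_{a})$, while $e^{-tH_{a}}\chi_{(\Omega,\infty)}(H_{a})=e^{-tH_{a}}-G_{1}(H_{a})$ is an integral operator whose continuous kernel I will denote $q_{t}(x,y):=p_{t}(x,y)-k^{G_{1}}(x,y)$; in particular $p_{t}(x,x)=k^{G_{1}}(x,x)+q_{t}(x,x)$. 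Because $0\le G_{1}\le\chi_{[0,\Omega]}$ with both functions supported in $[0,\Omega]$, the monotonicity \eqref{eq:c16} gives $k^{G_{1}}(x,x)\le k^{\Omega}(x,x)$ for all $x$; equivalently $k^{G_{1}}(x,x)=\inprod{e^{-tH_{a}}k_{x}^{\Omega},k_{x}^{\Omega}}\le\norm{k_{x}^{\Omega}}^{2}$.

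Next I would bound $q_{t}(x,x)$ from above. For any $0<s<t$ one has $G_{2}(\lambda)\le e^{-(t-s)\Omega}e^{-s\lambda}$ for every $\lambda\ge0$: for $\lambda>\Omega$ factor $e^{-t\lambda}=e^{-(t-s)\lambda}e^{-s\lambda}\le e^{-(t-s)\Omega}e^{-s\lambda}$, while for $\lambda\le\Omega$ the left side vanishes. Hence $0\le e^{-tH_{a}}\chi_{(\Omega,\infty)}(H_{a})\le e^{-(t-s)\Omega}e^{-sH_{a}}$ as positive operators; both sides are integral operators with continuous kernels, so the difference is a positive integral operator with continuous kernel, and testing against the approximate units $\varphi_{x}^{h}$ and letting $h\to0$ shows the diagonals are ordered, i.e. $0\le q_{t}(x,x)\le e^{-(t-s)\Omega}p_{s}(x,x)\le Ce^{-(t-s)\Omega}s^{-d/2}$. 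Combining the three facts with the lower heat-kernel bound $p_{t}(x,x)\ge ct^{-d/2}$,
\[
k^{\Omega}(x,x)\ \ge\ k^{G_{1}}(x,x)\ =\ p_{t}(x,x)-q_{t}(x,x)\ \ge\ ct^{-d/2}-Ce^{-(t-s)\Omega}s^{-d/2}\,.
\]
Taking $s=t/2$ makes the right-hand side equal to $t^{-d/2}\bigl(c-2^{d/2}Ce^{-t\Omega/2}\bigr)$, so choosing $t$ large enough, \emph{independently of} $x$, that $2^{d/2}Ce^{-t\Omega/2}\le c/2$ yields $k^{\Omega}(x,x)\ge\tfrac{c}{2}t^{-d/2}>0$ for all $x\in\rd$, which is the claim.

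The step that requires care — and the main obstacle — is the passage from operator inequalities to inequalities between the diagonal values of the kernels. For this I would verify that every operator involved is an integral operator with a \emph{continuous} kernel: for $\chi_{[0,\Omega]}(H_{a})$ this is the continuity of $x\mapsto k_{x}^{\Omega}$ from Proposition~\ref{cor:normcont} together with the embedding $\PW(H_{a})\hookrightarrow C_{0}(\rd)$ and the bound $\norm[\infty]{f}\le C\norm{f}$ on $\PW(H_{a})$; for the heat operators $e^{-rH_{a}}$ it is standard regularity of the heat kernel of a smooth uniformly elliptic operator in divergence form; and compositions and differences inherit continuity. Once continuity is in hand, the fact that a bounded positive integral operator has a nonnegative diagonal is elementary (test against $\varphi_{x}^{h}$ and let $h\to0$, exactly as \eqref{eq:c16} was argued). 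The only external input is the two-sided on-diagonal heat kernel estimate $ct^{-d/2}\le p_{t}(x,x)\le Ct^{-d/2}$, uniform in $x$, which holds for $H_{a}$ by the Gaussian estimates quoted above.
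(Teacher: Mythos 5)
Your proof is correct and its overall structure (sandwich $k^{\Omega}(x,x)$ between heat-kernel diagonals via spectral monotonicity plus two-sided Gaussian on-diagonal bounds) is the same as the paper's, but your treatment of the tail is genuinely simpler. The paper peels off the high-frequency part with a dyadic decomposition $\chi_{[0,\infty)}(u)e^{-tu}\le\chi_{[0,\Omega]}(u)+\sum_{k\ge0}\chi_{[0,2^{k+1}\Omega]}(u)\,e^{-t2^{k}\Omega}$, then needs the upper diagonal bound at every dyadic scale $2^{k+1}\Omega$, a choice $t=2^r/\Omega$, and a geometric/super-exponential series estimate; this also forces a remark about strong convergence of the operator series. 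Your single split $e^{-t\lambda}=e^{-t\lambda}\chi_{[0,\Omega]}(\lambda)+e^{-t\lambda}\chi_{(\Omega,\infty)}(\lambda)$ together with the two-time-scale inequality $e^{-t\lambda}\chi_{(\Omega,\infty)}(\lambda)\le e^{-(t-s)\Omega}e^{-s\lambda}$ (a clean interpolation in the exponent) produces $k^{\Omega}(x,x)\ge ct^{-d/2}-Ce^{-(t-s)\Omega}s^{-d/2}$ in one line; setting $s=t/2$ and taking $t$ large does the rest. What the paper's route buys is that it avoids invoking continuity of the heat kernel $p_s$ explicitly (it only ever compares diagonals of kernels supported on $[0,\Omega]$, controlled via \eqref{eq:compker}); your route needs the (standard, true) continuity of $p_s(x,y)$ to pass from the operator inequality $e^{-tH_a}\chi_{(\Omega,\infty)}(H_a)\le e^{-(t-s)\Omega}e^{-sH_a}$ to the diagonal inequality, as you correctly flag. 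Both are rigorous; yours is the more economical argument.
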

%The following argument is reproduced from \cite{MR2970038}:

\begin{proof}
As $\ind_{\Omega}\left(u\right)\leq e\cdot e^{-u/\Omega}\chi_{[0,\infty)}\left(u\right)$
% we obtain (using the fact that
 and $\chi_{[0,\infty)}\left(H_{a}\right)=\mathrm{Id}$, % is simply the identity operator) \emph{{*}{*}notation for kernels
% inconsistent{*}{*}}
we obtain 
\begin{equation}
k^{\Omega}(x,x)=\ind_{[0,\Omega]}(H_{a})(x,x)\leq ee^{-\Omega^{-1}H_{a}}(x,x)\le C\Omega^{d/2},\label{eq:compker}
\end{equation}
which gives an explicit upper bound for $\norm{k_{x}}^{2}$ in Proposition~\ref{prop:kernbound}.

For the proof of the lower bound, we use a dyadic decomposition. % first note that for any $t>0$ {*}{*}{*}we
% can use $\chi_{\left[\alpha,\beta\right]}$ {*}{*}{*}.
\begin{align}
\chi_{[0,\infty)}\left(u\right)e^{-tu} & =\ind_{[0,\Omega]}(u)e^{-tu}+\sum_{k\geq0}\ind_{(2^{k}\Omega,2^{k+1}\Omega]}(u)e^{-tu}\label{eq:tailest}\\
 & \leq\ind_{[0,\Omega]}(u)+\sum_{k\geq0}\ind_{[0,2^{k+1}\Omega]}(u)e^{-t2^{k}\Omega}\qquad t>0\,.\nonumber 
\end{align}
One can verify that this inequality remains true as an operator inequality
\[
\chi_{[0,\infty)}\left(H_{a}\right)e^{-tH_{a}}\leq\ind_{[0,\Omega]}(H_{a})+\sum_{k\geq0}\ind_{[0,2^{k+1}\Omega]}(H_{a})e^{-t2^{k}\Omega}\,,
\]
with strong convergence of the sum, and every term is an integral
operator. % Taking $\langle k_x^ , \cdot k_x^ \rangle $
% on both sides, we obtain a preliminary estimate for the heat kernel.% ****
% Since $\chi _{[0,\infty )}(H_a) = \mathrm{Id}$ and $\chi
% _{[0,\Omega]}(u) \leq ????$, \eqref{eq:c15} leads to ***
By~\eqref{eq:c16} and \eqref{eq:compker} the operator inequality
can be transferred to a corresponding inequality of the diagonals
of the integral kernel as follows: % every term in the sum has
% bounded support, we also know that the kernels of $\ind_{(2^{k}\Omega,2^{k+1}\Omega]}(H_{a})e^{-tH_{a}}$exist;
% for the convergence we have to verify that $\chi_{[R,\infty)}\left(H_{a}\right)e^{-tH_{a}}$
% is a kernel operator (i.e. can be written as integral operator with
% kernel), and that 
% \[
% \lim_{R\to\infty}\chi_{[R,\infty)}\left(H_{a}\right)e^{-tH_{a}}\left(x,x\right)=0\,.
% \]
% The first assertion follows form the fact that $\chi_{[R,\infty)}\left(H_{a}\right)e^{-tH_{a}}=e^{-tH_{a}}-\chi_{[0,R)}\left(H_{a}\right)e^{-tH_{a}}$,
% and both operators on the rhs are kernel operators. For the limit
% observe that
% \[
% \chi_{[R,\infty)}\left(H_{a}\right)e^{-tH_{a}}=e^{-tH_{a}/4}\chi_{[R,\infty)}\left(H_{a}\right)e^{-tH_{a}/2}\,e^{-tH_{a}/4}\,;
% \]
%  as $e^{-tH_{a}/4}$ is a kernel operator we can use \cite[Prop. 2.9]{MR2970038}
% to estimate its kernel, and obtain, using $\norm{\chi_{[R,\infty)}\left(H_{a}\right)e^{-tH_{a}}}=e^{-tR}$,
% \[
% \left(\chi_{[R,\infty)}\left(H_{a}\right)e^{-tH_{a}}\right)\left(x,x\right)\leq Ct^{-d/2}e^{-\frac{t}{2}R}\,,
% \]
% and this quantity approaches zero for $R\to\infty$.
%We are now prepared to estimate the lower bound for $k^{}\left(x,x\right)$:
\begin{align*}
ct^{-d/2} & \le p_{t}(x,x)\\
 & \le\ind_{[0,\Omega]}(H_{a})(x,x)+\sum_{k\geq0}\ind_{[0,2^{k+1}\Omega]}(H_{a})(x,x)e^{-t2^{k}\Omega}\\
 & \le\ind_{[0,\Omega]}(H_{a})(x,x)+C\Omega^{d/2}\sum_{k\geq0}2^{(k+1)d/2}\,e^{-t2^{k}\Omega}
\end{align*}

We choose $t=2^{r}/\Omega$ for $r\in\n$ to be specified later. Then

\begin{align*}
c\Omega^{d/2}2^{-rd/2} & \le\ind_{[0,\Omega]}(H_{a})(x,x)+C\Omega^{d/2}\sum_{k\geq0}e^{-2^{k}2^{r}}2^{(k+1)d/2}\\
 & =\ind_{[0,\Omega]}(H_{a})(x,x)+C2^{d/2}\Omega^{d/2}2^{-rd/2}\sum_{k\geq0}e^{-2^{k+r}}2^{(k+r)d/2}\\
 & \leq\ind_{[0,\Omega]}(H_{a})(x,x)+C2^{d/2}\Omega^{d/2}2^{-rd/2}\sum_{k\geq r}e^{-2^{k}}2^{kd/2}.
\end{align*}
Hence, %Since $\ind_{[0,\Omega]}(H_{a})(x,x) = k(x,x)$, 
\[
\Omega^{d/2}2^{-rd/2}\Big(c-C'2^{d/2}\sum_{k\geq r}e^{-2^{k}}2^{kd/2}\Big)\le\ind_{[0,\Omega]}(H_{a})(x,x)=k^{\Omega}(x,x)
\]
For $r\in\n$ sufficiently large, this implies the lower bound for
$k^{\Omega}(x,x)$. %=\ind_{[0,\Omega]}(H_{a})(x,x)$.
\end{proof}
\bibliographystyle{amsalpha}
\bibliography{almost_diagonal,general,new}

\end{document}